\setlist[enumerate]{label=(\alph*)}
\newcommand{\Z}{\mathbb{Z}}
\newcommand{\Q}{\mathbb{Q}}
\newcommand{\N}{\mathbb{N}}
\newcommand{\Hom}{\operatorname{Hom}}
\newcommand{\bigO}{\mathcal{O}}
\newcommand{\tr}{\operatorname{tr}}
\newcommand{\rk}{\operatorname{rk}}
\newcommand{\inv}{^{-1}}
\newcommand{\End}{\operatorname{End}}
\newcommand{\Res}{\operatorname{Res}}
\newcommand{\Ind}{\operatorname{Ind}}
\newcommand{\Inf}{\operatorname{Inf}}
\newcommand{\Def}{\operatorname{Def}}
\newcommand{\Iso}{\operatorname{Iso}}
\newcommand{\id}{\operatorname{id}}
\newcommand{\im}{\operatorname{im}}
\newcommand{\TI}{\operatorname{ten}}
\newcommand{\Syl}{\operatorname{Syl}}
\newcommand{\triv}{\mathbf{triv}}
\newcommand{\calE}{\mathcal{E}}
\newcommand{\calF}{\mathcal{F}}
\newcommand{\calH}{\mathcal{H}}
\newcommand{\calL}{\mathcal{L}}
\newcommand{\calO}{\mathcal{O}}
\newcommand{\calS}{\mathcal{S}}
\newcommand{\calT}{\mathcal{T}}
\newcommand{\calX}{\mathcal{X}}
\newcommand{\calY}{\mathcal{Y}}
\newcommand{\catmod}{\mathbf{mod}}
\newtheorem{theorem}{Theorem}[section]
\newtheorem{lemma}[theorem]{Lemma}
\newtheorem{prop}[theorem]{Proposition}
\newtheorem{corollary}[theorem]{Corollary}
\newtheorem*{theorem*}{Theorem}
\theoremstyle{remark}
\newtheorem{remark}[theorem]{Remark}
\newtheorem{example}[theorem]{Example}
\theoremstyle{definition}
\newtheorem{definition}[theorem]{Definition}
\begin{document}
	\title{Endotrivial complexes}
    \author{Sam K. Miller}
    \address{Department of Mathematics, University of California Santa Cruz, Santa Cruz, CA 95064} 
    \email{sakmille@ucsc.edu} 
    \subjclass[2010]{20J05, 19A22, 20C05, 20C20} 
    \keywords{endotrivial complex, splendid Rickard complex, trivial source ring, orthogonal unit, biset functors} 
    \begin{abstract}
		Let $G$ be a finite group, $p$ a prime, and $k$ a field of characteristic $p$. We introduce the notion of an endotrivial chain complex of $p$-permutation $kG$-modules, and study the corresponding group $\mathcal{E}_k(G)$ of endotrivial complexes. Such complexes are shown to induce splendid Rickard autoequivalences of $kG$. The elements of $\mathcal{E}_k(G)$ are determined uniquely by integral invariants arising from the Brauer construction and a degree one character $G \to k^\times$. Using ideas from Bouc's theory of biset functors, we provide a canonical decomposition of $\mathcal{E}_k(G)$, and as an application, give complete descriptions of $\mathcal{E}_k(G)$ for abelian groups and $p$-groups of normal $p$-rank 1. Taking Lefschetz invariants of endotrivial complexes induces a group homomorphism  $\Lambda: \mathcal{E}_k(G) \to O(T(kG))$, where $O(T(kG))$ is the orthogonal unit group of the trivial source ring. Using recent results of Boltje and Carman, we give a Frobenius stability condition elements in the image of $\Lambda$ must satisfy.
	\end{abstract}

    \maketitle

	\section{Introduction}

	Let $G$ be a finite group, $p$ a prime, and $k$ a field of characteristic $p$. Endotrivial $kG$-modules, i.e. $kG$-modules $M$ which satisfy $M^* \otimes_k M \cong k \oplus N$ for some projective $kG$-module $N$, are objects of interest for group theorists and representation theorists. The work of Dade, Puig, and many others has elucidated much about endotrivial modules and their corresponding group $\calT(G)$. Such modules arise frequently in the theory of modular representations. For instance, endotrivial modules induce autoequivalences of the stable module category ${}_{kG}\underline\catmod$, since a $kG$-module $M$ is endotrivial if and only if $M^* \otimes_k M \cong k$ in ${}_{kG}\underline\catmod$.

	In this paper, we adapt this notion of invertibility for chain complexes. We have multiple candidate categories to choose from in order to determine what ``invertibility'' means, and in the scope of this paper, we choose the homotopy category of bounded chain complexes $K^b({}_{kG}\mathbf{mod})$. Here, contractible complexes replace the role of projective modules. Our initial definition of an endotrivial chain complex is as follows. If $C$ is a bounded complex of $kG$-modules, $C$ is endotrivial if and only if $C^* \otimes_k C \simeq k[0]$, where $k[0]$ denotes the chain complex consisting of the trivial module in degree 0 and the zero module in all other degrees, and $\simeq$ denotes homotopy equivalence, i.e. isomorphism in $K^b({}_{kG}\catmod)$. We additionally require that each module of an endotrivial chain complex is a $p$-permutation module, that is, is a permutation module when restricted to a Sylow subgroup of $G$. As a desired consequence, endotrivial complexes will induce splendid Rickard equivalences; this is Theorem \ref{srcinductionthm}.

	\begin{theorem}
		Let $C$ be an endotrivial complex of $p$-permutation $kG$-modules. Then $\Ind^{G\times G}_{\Delta G} C$, regarded as a complex of $(kG,kG)$-bimodules, induces a splendid Rickard autoequivalence of $kG$.
	\end{theorem}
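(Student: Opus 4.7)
The plan is to reduce the defining conditions of a splendid Rickard autoequivalence to the endotrivial property of $C$ via the natural compatibility of diagonal induction with duality and with tensor products over $kG$. Writing $X := \Ind_{\Delta G}^{G \times G} C$, I expect the splendid condition on the terms of $X$ to follow from a direct vertex analysis, and the two Rickard isomorphisms $X \otimes_{kG} X^* \simeq kG[0]$ and $X^* \otimes_{kG} X \simeq kG[0]$ to fall out of two standard identities for diagonal induction, applied termwise and extended using the fact that additive functors preserve homotopy equivalences.

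First I would verify that each term $\Ind_{\Delta G}^{G \times G}(C_n)$ is a $p$-permutation $(kG, kG)$-bimodule with twisted diagonal vertex. Viewed as a $k\Delta G$-module via the isomorphism $G \cong \Delta G$, each $C_n$ is a $p$-permutation module with some vertex $\Delta Q \leq \Delta G$. Since induction preserves the $p$-permutation property, and the vertices of indecomposable summands of $\Ind_{\Delta G}^{G \times G}(C_n)$ are $(G \times G)$-conjugates of subgroups of $\Delta Q$, each such vertex is a twisted diagonal $p$-subgroup of $G \times G$. This is the standard splendid hypothesis on bimodules.

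Second, I would verify the Rickard condition using two natural identities: (a) $(\Ind_{\Delta G}^{G \times G}(M))^* \cong \Ind_{\Delta G}^{G \times G}(M^*)$, valid since $[G \times G : \Delta G] < \infty$; and (b) the tensor product formula
\[
\Ind_{\Delta G}^{G \times G}(M) \otimes_{kG} \Ind_{\Delta G}^{G \times G}(N) \cong \Ind_{\Delta G}^{G \times G}(M \otimes_k N),
\]
a classical identity for diagonal induction, together with the bimodule identification $\Ind_{\Delta G}^{G \times G}(k) \cong kG$. Both (a) and (b) are natural in their arguments and extend termwise to complexes, and since $\Ind_{\Delta G}^{G \times G}$ is additive it preserves homotopy equivalences. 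Combining these with the hypothesis $C \otimes_k C^* \simeq k[0]$ yields
\[
X \otimes_{kG} X^* \cong \Ind_{\Delta G}^{G \times G}(C \otimes_k C^*) \simeq \Ind_{\Delta G}^{G \times G}(k[0]) \cong kG[0],
\]
and symmetrically $X^* \otimes_{kG} X \simeq kG[0]$.

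The main obstacle is the careful bookkeeping in the tensor product identity (b). Under the vector space identification $\Ind_{\Delta G}^{G \times G}(M) \cong kG \otimes_k M$ via $(g,1) \otimes m \leftrightarrow g \otimes m$, the $(kG, kG)$-bimodule structure becomes $g_1 \cdot (g \otimes m) \cdot g_2 = (g_1 g g_2) \otimes (g_2^{-1} m)$, so the left and right $G$-actions become intertwined with the $kG$-action on $M$, and the isomorphism in (b) must be checked to respect the full bimodule structure rather than just a one-sided action. Once this is done at the module level, extension to chain complexes is formal, and the deduction of the Rickard condition from the endotrivial one is then a straightforward application of the identities above.
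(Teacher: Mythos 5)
Your proposal is correct and follows essentially the same route as the paper: the paper likewise establishes the two key natural identities (compatibility of $\Ind_{\Delta G}^{G\times G}$ with $k$-duality, via its Propositions \ref{inddualcommute} and \ref{prop:dualidentification}, and the isomorphism $\Ind_{\Delta G}^{G\times G}(-)\otimes_{kG}\Ind_{\Delta G}^{G\times G}(-)\cong\Ind_{\Delta G}^{G\times G}(-\otimes_k-)$), applies them termwise to $C\otimes_k C^*\simeq k[0]$, and handles the twisted-diagonal vertex condition by the same observation that induction from $\Delta G$ sends $\Delta H$-projective summands to summands with (twisted) diagonal vertices.
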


	Boltje and Xu proved in \cite[Theorem 1.5]{BX07} that any splendid Rickard complex induces a $p$-permutation equivalence by taking an alternating sum of its components, i.e. taking its Lefschetz invariant (also referred to as its Euler characteristic). This induces a Lefschetz map from the set of splendid Rickard complexes to the set of $p$-permutation equivalences. We are interested in determining the image of this map, as the image may shine light on the interplay between the two types of equivalences.

	The trivial source ring $T(kG)$ is the Grothendieck group of the category of $p$-permutation modules with respect to split exact sequences, and the orthogonal unit group $O(T(kG))\leq T(kG)^\times$ is the subgroup consisting of units whose inverse is its dual. Analogously, the Lefschetz invariant of an endotrivial complex is an orthogonal unit of the trivial source ring, and we ask the same question of what the image of the Lefschetz map is. Orthogonal units induce $p$-permutation equivalences in the same manner as endotrivial complexes induce splendid Rickard complexes, so determining the image of the Lefschetz map for endotrivial complexes may be viewed as a special case of determining the image of the Lefschetz map for splendid Rickard complexes.

	A classical tool for studying $p$-permutation modules is the Brauer construction, which, given any $p$-subgroup $P$ of $G$ and $kG$-module $M$, functorially constructs a $kN_G(P)$-module $M(P)$ of which $P$ acts trivially, hence a $k[N_G(P)/P]$-module. This can be viewed as an analogue of taking $P$-fixed points of a $G$-set. For endotrivial complexes of $kG$-modules, the Brauer construction induces ``local'' endotrivial complexes of $k[N_G(P)/P]$-modules. For each $p$-subgroup of $G$, one can associate an integer corresponding to the location of non-acyclicity of the corresponding local endotrivial complex, which we call an \textit{h-mark}, analogous to the marks of a $G$-set. In fact, taking h-marks at all $p$-subgroups induces a group homomorphism, and up to a twist by a $k$-dimension one representation, completely characterizes the homotopy class of an endotrivial complex. We describe with more precision and prove these statements in Section \ref{endotrivialintroduction}.

	\begin{theorem}
		Denote by $\calE_k(G)$ the group of homotopy classes of endotrivial chain complexes of $kG$-modules, with group law given by $\otimes_k$. We have a group homomorphism \[h: \calE_k(G) \to \left(\prod_{P \in s_p(G)} \Z\right)^G\] with kernel consisting of all isomorphism classes of $k$-dimension one representations of $G$, regarded as chain complexes in degree 0.

		In particular, $\calE_k(G)$ has rank as abelian group bounded by the number of conjugacy classes of $p$-subgroups of $G$. Moreover, two endotrivial complexes are homotopy equivalent if and only if they have the same h-marks and isomorphic homology in all degrees.
	\end{theorem}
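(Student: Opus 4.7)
The plan is to construct the $h$-mark invariants via the Brauer construction, verify $h$ is a well-defined homomorphism with the asserted kernel, and then derive both the rank bound and the homology-plus-marks classification from the kernel computation.

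For each $p$-subgroup $P \leq G$, the Brauer construction $M \mapsto M(P)$ descends to an exact symmetric monoidal functor on the bounded homotopy category of $p$-permutation modules that commutes with $k$-linear duality. Applied to the defining relation $C^* \otimes_k C \simeq k[0]$, it yields $C(P)^* \otimes_k C(P) \simeq k[0]$ over $k[N_G(P)/P]$, so $C(P)$ is again endotrivial. Over a field, the K\"unneth formula applied to $H_*(C(P))^* \otimes H_*(C(P)) = k$ in degree $0$ forces $H_*(C(P))$ to be concentrated in a single degree with one-dimensional value, and I define $h_P(C) \in \Z$ to be that degree. Multiplicativity $(C \otimes D)(P) \cong C(P) \otimes D(P)$ and K\"unneth give $h_P(C \otimes D) = h_P(C) + h_P(D)$, while the natural isomorphism $C({}^g P) \cong {}^g C(P)$ gives conjugation-invariance, so $h$ is a well-defined homomorphism into $\left(\prod_P \Z\right)^G$.

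For a one-dimensional representation $L$ of $G$, the $p$-subgroup $P$ acts trivially on $L$, so $L^P = L$, and the relative traces $\operatorname{tr}_Q^P(L) = [P:Q] \cdot L$ vanish in characteristic $p$ for $Q < P$; hence $L(P) = L$ concentrated in degree $0$, giving $h_P(L[0]) = 0$ and $\Hom(G, k^\times) \subseteq \ker h$. The technical core is the reverse inclusion. Suppose $h(C) = 0$. Taking $P = 1$ forces $H_*(C)$ to be concentrated in degree $0$, and K\"unneth applied to $C^* \otimes_k C \simeq k[0]$ makes $H_0(C) = L$ one-dimensional. Replacing $C$ by $C \otimes_k L^*[0]$ reduces to showing that an endotrivial $C$ with $h(C) = 0$ and $H_*(C) = k$ concentrated in degree $0$ is homotopy equivalent to $k[0]$.

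For this step I would proceed by induction on $|G|$. For each non-trivial $P$, the complex $C(P)$ is an endotrivial $k[N_G(P)/P]$-complex with vanishing $h$-marks, so the inductive hypothesis yields $C(P) \simeq M_P[0]$ for some one-dimensional $k[N_G(P)/P]$-module $M_P$. Choose a chain representative $f : k[0] \to C$ of $1 \in H_0(C) = k$. One then aims to show $\operatorname{Cone}(f)$ is contractible; by the standard fact that contractibility of bounded complexes of $p$-permutation modules is detected locally by the Brauer construction, this reduces to showing $f(P) : k[0] \to C(P)$ is a quasi-isomorphism for every $p$-subgroup $P$. The main obstacle sits exactly here: one must prove each $M_P$ is trivial as a $k[N_G(P)/P]$-module. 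Extracting this local triviality from the global hypotheses $H_0(C) = k$ and $h_P(C) = 0$ is the delicate point, which I anticipate handling by exploiting the compatibility of the Brauer construction with restriction to a Sylow subgroup containing $P$, combined with the torsion structure of $\Hom(N_G(P)/P, k^\times)$.

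With the kernel identified, the rank bound is immediate: $\Hom(G, k^\times)$ is finite, so the torsion-free rank of $\calE_k(G)$ injects via $h$ into $\left(\prod_P \Z\right)^G$, whose rank equals the number of $G$-conjugacy classes of $p$-subgroups. Finally, for the concluding equivalence, given endotrivial $C, D$ with $h(C) = h(D)$ and $H_*(C) \cong H_*(D)$, set $E := C \otimes_k D^*$; then $h(E) = 0$ by additivity, and $H_*(E) = H_*(C) \otimes H_*(D)^*$ collapses via K\"unneth to $k$ in degree $0$. By the kernel computation, $E \simeq L[0]$ for some one-dimensional $L$, and comparison with $H_0(E) = k$ forces $L = k$, so $E \simeq k[0]$ and $C \simeq D$.
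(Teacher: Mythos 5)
Your construction of $h$, the inclusion $\Hom(G,k^\times) \subseteq \ker h$, the rank bound, and the final step deriving the homotopy classification from the kernel computation are all sound and essentially match the paper. The problem is the reverse inclusion $\ker h \subseteq \Hom(G,k^\times)$, which is the actual content of the theorem and which your write-up does not prove: you reduce it to showing that each local module $M_P$ with $C(P) \simeq M_P[0]$ is the \emph{trivial} $k[N_G(P)/P]$-module and that your chosen chain map $f\colon k[0] \to C$ becomes a quasi-isomorphism after applying $-(P)$, and you then explicitly defer this as ``the delicate point.'' This is a genuine gap, not a routine verification. Knowing $H_0(C) \cong k$ globally does not by itself control $H_0(C(P))$: the Brauer construction is not exact and does not commute with taking homology, so there is no a priori comparison map from $H_0(C)$ to $H_0(C(P))$ forcing $M_P$ to be trivial; and even granting $M_P \cong k$, one must still argue that the image of $f(1)$ in $C_0(P)$ is a cycle whose class generates $H_0(C(P))$ rather than dying in the Brauer quotient. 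The gesture toward restriction to a Sylow subgroup and the torsion of $\Hom(N_G(P)/P, k^\times)$ does not indicate how either point would be settled.

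The paper proves the kernel statement by a more elementary route that avoids identifying any local homology at all: take a representative $C$ of minimal length with $h_C \equiv 0$. If the length is at least $2$, let $i>0$ be the top nonzero degree (or argue dually at the bottom); since $h_C(P)=0$ for every $P$, the homology of $C(P)$ is concentrated in degree $0$, so $d_i(P)$ is injective for every $p$-subgroup $P$, and Theorem \ref{permsplit} (split injectivity of maps between $p$-permutation modules is detected on Brauer quotients) makes $d_i$ split injective. Stripping off the resulting contractible summand contradicts minimality, so $C = M[0]$ and endotriviality forces $\dim_k M = 1$. Any repair of your cone-based strategy would need a splitting argument of exactly this type to show $\operatorname{Cone}(f)(P)$ is not merely acyclic but split acyclic, so I would recommend replacing the induction on $|G|$ with the length-reduction argument.
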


	In addition, we take inspiration from Bouc's theory of biset functors to consider the notion of the ``faithful'' subgroup of $\calE_k(G)$. We obtain a canonical decomposition of $\calE_k(G)$ into a direct product of faithful subgroups of quotient groups of $G$, which allows us to recursively determine $\calE_k(G)$ for abelian groups and groups of normal $p$-rank 1. In all such cases, we conclude that every orthogonal unit of the trivial source ring lifts to an endotrivial complex. These computations are carried out in Section \ref{section5}.

	Unfortunately, completely determining $\calE_k(G)$ for arbitrary groups does not yet appear feasible, nor does the task of determining the cokernel of $\Lambda: \calE_k(G) \to O(T(kG))$ in complete generality. In the case of $p$-groups, $O(T(kG))$ is canonically isomorphic to the unit group of the Burnside ring of $G$, $B(G)^\times$. In particular for odd $p$, it is easy to show that $\Lambda$ is surjective since in this case, $B(G)^\times = \{\pm 1\}$. The question of $p = 2$ is more difficult - in this case a basis for $B(G)^\times$ was described by Bouc in \cite[Theorem 7.4]{Bo07}, and relies on a generalized tensor induction. Unfortunately, tensor induction of chain complexes does not preserve endotriviality of complexes: we give an example in Section \ref{faithfulsection} demonstrating that even for small groups, tensor induction does not hold.

    In contrast to the previous cases, we find large classes of orthogonal units which cannot be lifted to endotrivial complexes by observing the action induced by the Frobenius endomorphism on the corresponding module categories. This theorem is the main result of Section \ref{nonsurjectivesection}. It requires some technical background, which is detailed in Remark \ref{trivsourceringdecomp}. In particular, there exists an injective homomorphism described by Boltje and Carman in \cite[Theorem A]{BC23}, $\beta_G: O(T(kG)) \to \prod_{P \in s_p(G)} R_k(N_G(P)/P)$ which associates to each orthogonal unit $u \in O(T(kG))$ a tuple $(\epsilon_P\cdot \rho_P)_{P \in s_p(G)}$ with $\epsilon_P \in \{\pm 1\}$ and $\rho_P \in \Hom(N_G(P)/P, k^\times)$.

	\begin{theorem}
		Let \[ u \in (B(S)^G)^\times \times \calL_G \leq O(T(kG)) \text{  with  } \beta_G(u) = (\epsilon_P \cdot \rho_P)_{P\in s_p(G)},\]and suppose $\rho_1$ is the trivial character on $G$. If there exists an endotrivial complex $C$ for which $\Lambda(C) = u$, then for all $p$-subgroups $P$ of $G$, $\rho_P$ admits values in $\mathbb{F}_p^\times$.
	\end{theorem}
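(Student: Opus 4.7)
The strategy is to define a Frobenius action on $\calE_k(G)$ and on $O(T(kG))$ that intertwines the Lefschetz map $\Lambda$, and then use the hypothesis $\rho_1 = \triv$ together with the structure theorem for $\calE_k(G)$ to show that any such $C$ is Frobenius-fixed. This will force each $\rho_P$ to be Frobenius-fixed as well, which for a linear character amounts to taking values in $\mathbb{F}_p^\times$.

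The Frobenius $\phi\colon k \to k$, $x \mapsto x^p$ induces a Frobenius-twist functor $(-)^{(\phi)}$ on $kG$-modules (by extension of scalars along $\phi$, assuming $k$ perfect), which is symmetric monoidal and duality-preserving. The class of $p$-permutation modules is closed under Frobenius twist, so $(-)^{(\phi)}$ descends to group endomorphisms of both $\calE_k(G)$ and $O(T(kG))$, and termwise compatibility gives $\Lambda(C^{(\phi)}) = \Lambda(C)^{(\phi)}$.

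The key step is to show $C \simeq C^{(\phi)}$ using the structure theorem for $\calE_k(G)$ from Section~\ref{endotrivialintroduction}. The Brauer construction is defined via trace maps and commutes with Frobenius twist, giving $(C^{(\phi)})(P) \cong C(P)^{(\phi)}$; since Frobenius twist preserves non-vanishing of homology, the h-marks of $C$ and $C^{(\phi)}$ agree. For homology, $H_*(C^{(\phi)}) \cong H_*(C)^{(\phi)}$, and since $C$ is endotrivial, $H_*(C) = \rho[n_1]$ for a single one-dimensional representation $\rho$ in some degree $n_1$. A Brauer-character / Euler--Poincar\'e argument at $P = 1$ identifies $\rho$ with $\rho_1$ up to the sign $\epsilon_1 = (-1)^{n_1}$. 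The hypothesis $\rho_1 = \triv$ then gives $\rho^{(\phi)} \cong \rho$, so the structure theorem concludes $C \simeq C^{(\phi)}$.

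Applying $\Lambda$ yields $u = u^{(\phi)}$ in $O(T(kG))$. Because $\beta_G$ from \cite{BC23} is built from species and Brauer characters, it is Frobenius-equivariant, acting on each factor $R_k(N_G(P)/P)$ by raising linear characters to their $p$-th power. Hence $\rho_P^p = \rho_P$ for all $P \in s_p(G)$, and so $\rho_P(g) \in \mathbb{F}_p^\times$. The main obstacle I anticipate is the bookkeeping around Frobenius-equivariance of $\beta_G$ --- specifically the identification of $\rho_1$ with the homology class of $C$ --- which requires carefully unpacking the construction of $\beta_G$ in \cite{BC23}; the remaining compatibilities reduce to standard properties of the Frobenius twist applied to complexes of $p$-permutation modules.
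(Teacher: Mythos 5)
Your proposal is correct and follows essentially the same route as the paper: twist by the Frobenius, observe that h-marks are preserved, use the hypothesis $\rho_1 = \triv$ together with the injectivity of $\Xi$ (equivalently, $\ker h = \{k_\omega[0]\}$) to force Frobenius-invariance of the complex, and transfer the conclusion through $\Lambda$ and the Frobenius-equivariance of $\beta_G$. The paper's only (cosmetic) difference is that it reads off the local homology of $C \otimes_k ({}^F C)^*$ directly instead of first establishing $C \simeq {}^F C$.
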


	The paper is structured as follows. Section \ref{prelims} recalls most of the preliminary definitions and theorems needed for the paper. Section \ref{endotrivialintroduction} introduces endotrivial complexes, the group $\calE_k(G)$, and the h-mark homomorphism $h$, and establishes key structural properties of $\calE_k(G)$. Section \ref{basicpropsofendotrivs} describes the behavior of $\calE_k(G)$ with regards to functorial constructions, describes how one can partially recover the Lefschetz invariant of an endotrivial complex from the h-marks, and proves that endotrivial complexes induce splendid autoequivalences of $kG$. Section \ref{faithfulsection} introduces the faithful component of $\calE_k(G)$, gives a canonical decomposition of $\calE_k(G)$ into faithful components of quotient groups, and describes which modules can be components of indecomposable faithful complexes. Section \ref{section5} computes $\calE_k(G)$ for abelian, dihedral, generalized quaternion, and semidihedral groups. Finally, Section \ref{nonsurjectivesection} provides a necessary condition elements of $O(T(kG))$ must satisfy to be contained in the image of the Lefschetz homomorphism.

	\textbf{Notation:} Throughout, unless stated otherwise, $G$ is a finite group, $p$ a prime, and $(K, \bigO, k)$ is a $p$-modular system. That is, $\bigO$ is a complete discrete valuation ring of characteristic 0, $K$ is its field of fractions and $k$ its residue field of characteristic $p$. Note that we make no assumptions about $(K, \bigO, k)$ being large enough for $G$. All modules are finitely generated.

	We denote the set of subgroup of $G$ by $s(G)$ and the set of $p$-subgroups of $G$ by $s_p(G)$. Moreover, we write $[s(G)]$ (resp. $[s_p(G)]$) to denote a set of representatives of the conjugacy classes of $s(G)$ (resp. $s_p(G)$). If two subgroups $H_1, H_2 \leq G$ are $G$-conjugate, we write $H_1 =_G H_2$, and if $H_1$ is a subgroup of $H_2$ up to $G$-conjugacy, we write $H_1 \leq_G H_2$. If $K, H \leq G$, we write $[G/H]$ to denote a set of coset representatives of $G/H$ and write $[K\backslash G/H]$ to denote a set of double coset representatives of $K \backslash G/H$. $\Syl_p(G)$ denotes the set of Sylow $p$-subgroups of $G$.

	Let $R$ be a commutative ring. ${}_{RG}\mathbf{mod}$ denotes the category of finitely generated $RG$-modules, $Ch({}_{RG}\mathbf{mod})$ (resp. $Ch^b({}_{RG}\mathbf{mod})$) denotes the category of chain complexes (resp. bounded chain complexes) of finitely generated $RG$-modules, and $K({}_{RG}\mathbf{mod})$ (resp. $K^b({}_{RG}\mathbf{mod})$) denotes the homotopy category (resp. bounded homotopy category) of ${}_{RG}\mathbf{mod}$. Given a $RG$-module $M$, we write $M^*$ for the $R$-dual of $M$, the $RG$-module $\Hom_R(M,R)$ with left $RG$-action defined by $(g\cdot f)(m) = f(g\inv m)$. Given an $(RG,RH)$-bimodule $M$, we write $M^*$ for the $R$-dual of $M$, the $(RH,RG)$-bimodule $\Hom_R(M, R)$ with bimodule action defined by $(h\cdot f\cdot g)(m) = f(g m h)$. Finally, given two $RG$-modules $M, N$, the tensor product $M \otimes_R N$ is again an $RG$-module, with diagonal $G$-action $g\cdot (m\otimes n) = (gm \otimes gn)$.

	\section{Preliminaries}\label{prelims}

	In this section, we recall some standard facts about $p$-permutation modules, the Brauer construction, homotopy theory for chain complexes, endotrivial modules, and the Burnside ring.

	\subsection{$p$-permutation modules and the Brauer construction}

	\begin{definition}
		A $kG$-module $M$ is a \textit{$p$-permutation module} if $\Res^G_P M$ is a permutation module for all $p$-subgroups $P\in s_p(G)$.
	\end{definition}

	Note that it suffices to check $\Res^G_S M$ is a permutation module for $S \in \Syl_p(G)$. When working over a field of characteristic $p$, we have an equivalent definition of $p$-permutation modules. Given $H \leq G$, a $kG$-module $M$ is \textit{relatively $H$-projective} if there exists a $kH$-module $N$ for which $M$ is a direct summand of $\Ind^G_H N$. If $\calX$ is a set of subgroups of $G$, we say that $M$ is $\calX$-projective if each indecomposable direct summand of $M$ is $H$-projective for some $H \in \calX$. If $M$ is indecomposable, we say $M$ has \textit{vertex $P$} and \textit{source $N$} if $P \leq G$ is minimal with respect to the property that $M$ is $P$-projective, and $N$ is an indecomposable $kP$-module for which $M$ is a direct summand of $\Ind^G_P N$. It is well-known that the set of vertices of an indecomposable module form a full conjugacy class of $p$-subgroups of $G$.

	\begin{theorem}
		Let $M$ be a $kG$-module. $M$ is a $p$-permutation module if and only if $M$ is a direct summand of a permutation module. Moreover, if $M$ is indecomposable, then $M$ is a $p$-permutation module if and only if $M$ has trivial source.
	\end{theorem}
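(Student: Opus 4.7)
The plan is to prove the theorem in four steps, handling each ``if and only if'' direction separately.

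For the first equivalence, the easy direction is that any direct summand of a permutation module is a $p$-permutation module: if $M \mid X$ with $X$ a permutation $kG$-module and $P \in s_p(G)$, then $\Res^G_P M \mid \Res^G_P X$, and $\Res^G_P X$ is a permutation $kP$-module. The substantive fact needed here is that a direct summand of a permutation $kP$-module (with $P$ a $p$-group) is again a permutation module. This I would deduce from two ingredients: (i) for $Q \leq P$ with $P$ a $p$-group, the module $k[P/Q]$ is indecomposable, because its fixed-point space $k[P/Q]^P$ is one-dimensional and hence $\End_{kP}(k[P/Q])$ is local; and (ii) the Krull--Schmidt theorem, which forces any direct summand of $\bigoplus_i k[P/Q_i]$ to be a subsum of the $k[P/Q_i]$'s.

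For the converse, suppose $M$ is a $p$-permutation $kG$-module and fix $S \in \Syl_p(G)$. Since $[G:S]$ is invertible in $k$, Higman's criterion (using $\phi = [G:S]^{-1}\id_M$) shows that $M$ is relatively $S$-projective, so $M$ is a direct summand of $\Ind^G_S \Res^G_S M$. By hypothesis $\Res^G_S M$ is a permutation $kS$-module, and induction carries permutation modules to permutation modules, so $M$ is a direct summand of a permutation $kG$-module.

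For the ``moreover'' part, assume $M$ is indecomposable. If $M$ is $p$-permutation, then by the above $M \mid k[G/Q]$ for some $Q \leq G$, and by Krull--Schmidt we may choose $Q$ minimal up to $G$-conjugacy, forcing $Q$ to be a vertex $P$ of $M$ (and in particular $P \in s_p(G)$). A source $N$ of $M$ is an indecomposable direct summand of $\Res^G_P M$ occurring in the Mackey decomposition of $\Res^G_P \Ind^G_P N$; since $\Res^G_P M$ is a permutation $kP$-module, $N \cong k[P/H']$ for some $H' \leq P$. But then $M \mid \Ind^G_P \Ind^P_{H'} k = \Ind^G_{H'} k$, and minimality of the vertex $P$ forces $H' = P$, so $N \cong k$. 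Conversely, if $M$ has trivial source and vertex $P$, then $M \mid \Ind^G_P k = k[G/P]$ is a direct summand of a permutation module, hence $p$-permutation by step one.

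The main obstacle is really the first step, namely the ``summand of permutation is permutation'' statement for $p$-groups, which depends critically on working in characteristic $p$ and on the local endomorphism rings of transitive $p$-group permutation modules. The remainder of the argument is essentially bookkeeping with Higman's criterion, the Mackey formula, and minimality of vertices.
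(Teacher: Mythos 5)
Your argument is essentially the standard textbook proof of this statement; the paper itself gives no proof, citing Linckelmann (Theorems 5.10.2 and 5.11.2) instead. All four steps are sound: indecomposability of $k[P/Q]$ for $P$ a $p$-group via the one-dimensional fixed-point space (hence simple socle, hence local endomorphism ring), Krull--Schmidt, Higman's criterion at a Sylow subgroup, and the Mackey/vertex-source bookkeeping. One step is phrased in a way that reads as circular: choosing $Q$ minimal with $M \mid k[G/Q]$ does \emph{not} by itself force $Q$ to be a vertex of $M$ --- minimality only gives that a vertex is contained in $Q$ up to conjugacy, and the assertion that $M \mid k[G/P]$ for an actual vertex $P$ is precisely the trivial-source claim you are in the middle of proving. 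The fix is harmless and already contained in the rest of your paragraph: take $P$ to be an honest vertex of $M$ (a $p$-subgroup, since Higman's criterion shows $M$ is $S$-projective for $S \in \Syl_p(G)$); then $\Res^G_P M$ is a permutation $kP$-module by your first step, the source $N$ is a direct summand of $\Res^G_P M$ and hence isomorphic to some $k[P/H']$, and $M \mid \Ind^G_{H'} k$ combined with minimality of the vertex forces $H' = P$, so $N \cong k$. With that rewording the proof is complete and matches the cited argument.
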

    \begin{proof}
        See \cite[Theorem 5.10.2 and Theorem 5.11.2]{L181}.
    \end{proof}

	For this reason, indecomposable $p$-permutation modules are also referred to as \textit{trivial source modules}. We write ${}_{kG}\textbf{triv}$ to denote the full subcategory of ${}_{kG}\mathbf{mod}$ with objects given by all $p$-permutation modules. It is an additive category which is symmetric monoidal and idempotent complete, but is in general not pre-abelian, since kernels and cokernels of homomorphisms of $p$-permutation modules need not be $p$-permutation. It is the idempotent completion of the category ${}_{kG}\textbf{perm}$ of permutation $kG$-modules.

	For finite $p$-groups, the trivial source modules are easy to describe.

	\begin{prop}
		Let $P$ be a finite $p$-group. Then the isomorphism classes of indecomposable $p$-permutation modules are given by $k[P/Q]$, where $Q$ runs through all subgroups of $P$.
	\end{prop}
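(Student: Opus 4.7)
The plan is to combine the preceding theorem (that $p$-permutation modules are exactly summands of permutation modules) with a direct indecomposability argument for transitive permutation $kP$-modules, exploiting the fact that $P$ is a $p$-group.

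First I would note that every permutation $kP$-module decomposes as a direct sum of transitive permutation modules $k[P/Q_i]$ for various subgroups $Q_i \leq P$. By the previous theorem and the Krull--Schmidt theorem, every indecomposable $p$-permutation $kP$-module is isomorphic to a direct summand of some $k[P/Q]$. So it suffices to show (i) that each $k[P/Q]$ is itself indecomposable, and (ii) that $k[P/Q] \cong k[P/Q']$ if and only if $Q$ and $Q'$ are $P$-conjugate.

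For (i), I would use the characteristic feature of modular representations of $p$-groups: the trivial module $k$ is the unique simple $kP$-module. Since $k[P/Q] = \Ind_Q^P k$, Frobenius reciprocity gives
\[
\Hom_{kP}(k[P/Q], k) \cong \Hom_{kQ}(k, \Res_Q^P k) \cong \Hom_{kQ}(k,k) \cong k.
\]
Hence the head of $k[P/Q]$ is one-dimensional, so $k[P/Q]$ has a unique maximal submodule. A module with a unique maximal submodule is indecomposable (any nontrivial direct summand would force a second maximal submodule). Equivalently, this shows $\End_{kP}(k[P/Q])$ is a local ring.

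For (ii), the forward direction follows from the fact that the vertex of $k[P/Q]$ is exactly $Q$ up to $P$-conjugacy: indeed $k[P/Q]$ is $Q$-projective by construction, and by the argument above it has trivial source over $Q$, so its vertices form the $P$-conjugacy class of $Q$. The reverse direction is obvious, since $Q$ and $Q'$ conjugate in $P$ induces an isomorphism of $P$-sets $P/Q \cong P/Q'$.

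The result is standard and there is no substantive obstacle; the only subtlety is choosing the cleanest route to indecomposability. The Frobenius reciprocity argument above seems preferable to computing the Hecke algebra $k[Q\backslash P/Q]$ and verifying directly that it is local, though both work.
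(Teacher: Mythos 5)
Your argument is correct, and it takes a genuinely different route from the paper, which proves this proposition simply by citing \cite[Corollary 1.11.4]{L181}. Your reduction via Krull--Schmidt to summands of transitive permutation modules, followed by the Frobenius-reciprocity computation $\Hom_{kP}(k[P/Q],k)\cong\Hom_{kQ}(k,k)\cong k$ showing that $k[P/Q]$ has simple head (using that $k$ is the unique simple $kP$-module) and is therefore indecomposable, is the standard self-contained proof and is more informative than the bare citation. The one spot where your justification is loose is the forward direction of (ii): knowing that $k[P/Q]$ is $Q$-projective with trivial source only gives that its vertex is contained in $Q$ up to conjugacy, not that it equals $Q$. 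To close this you need the reverse containment, which follows for instance from $k[P/Q](Q)\cong k[N_P(Q)/Q]\neq 0$ together with Proposition \ref{brauerppermproperties} (c) (a trivial source module has nonzero Brauer quotient at $R$ if and only if $R$ is contained in a vertex). Alternatively, you can bypass vertices entirely: if $k[P/Q]\cong k[P/Q']$, apply $-(Q)$ to both sides; nonvanishing of the left-hand side forces $(P/Q')^Q\neq\emptyset$, i.e. $Q\leq_P Q'$, and symmetry gives conjugacy.
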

    \begin{proof}
        See \cite[Corollary 1.11.4]{L181}.
    \end{proof}

	\begin{remark}
		Similarly, one can define $p$-permutation $\bigO G$-modules in the same way. We have a canonical functor $k\otimes_\bigO -: {}_{\bigO G}\triv \to {}_{kG}\triv$ which is essentially surjective on objects and surjective on morphisms, see \cite[Theorem 5.11.2]{L181}. In fact, this functor induces a one-to-one correspondence between the objects of ${}_{\bigO G}\triv$ and ${}_{kG}\triv$, that is, given a $p$-permutation $kG$-module $M$, there is a unique (up to isomorphism) $p$-permutation $\bigO G$-module $\widehat{M}$ such that $k \otimes_\bigO \widehat{M}\cong M$.

		In the sequel, we will work with chain complexes of $p$-permutation $kG$-modules. In general, we again have a functor $Ch({}_{\bigO G}\triv) \to Ch({}_{kG}\triv)$, however it is no longer essentially surjective, since if one attempts to lift a chain complex of $p$-permutation $kG$-modules to a chain complex of $p$-permutation $\bigO G$-modules, the resulting differentials of the resulting graded object may no longer satisfy $d^2 = 0$. For example, the 3-term chain complex of $p$-permutation $\mathbb{F}_2C_2$-modules $\mathbb{F}_2 \to \mathbb{F}_2C_2 \to \mathbb{F}_2$ with each differential the unique nonzero homomorphism between the two modules has no lift to $\bigO$.
	\end{remark}

	\begin{definition}
		The \textit{trivial source ring}, denoted by $T(kG)$, is the Grothendieck ring of ${}_{kG}\triv$ with respect to split exact sequences. Given a $kG$-module $M$, we write $[M] \in T(kG)$ to denote the class of $M$ in $T(kG)$. Every element $x \in T(kG)$ can be expressed as $x = [M] - [N]$ for some $p$-permutation $kG$-modules $M$, $N$. Addition in $T(kG)$ is induced via direct sums, that is, given $p$-permutation $kG$-modules $M,N$, we have \[[M]+[N] = [M\oplus N] \in T(kG).\] $T(kG)$ is a finitely generated free abelian group with basis given by elements $[M]$, where $M$ runs through a set of representatives of the isomorphism classes of trivial source $kG$-modules.

        Multiplication in $T(kG)$ is induced by tensor products, that is, given $p$-permutation $kG$-modules $M,N$, we have \[[M]\cdot [N] = [M \otimes_k N].\] Taking the $k$-dual of a $p$-permutation $kG$-module induces a ring automorphism $-^*: T(kG) \to T(kG)$ which is self-inverse. $O(T(kG))$ denotes the subgroup of the unit group $T(kG)^\times$ consisting of \textit{orthogonal units}, i.e. units $x \in T(kG)^\times$ for which $x^* = x\inv$.

		Denote by $R_k(G)$ the \textit{Brauer character ring} of $kG$. It is isomorphic to the Grothendieck group of ${}_{kG}\triv$ with respect to short exact sequences, and therefore we have a canonical surjection $T(kG) \to R_k(G)$. It is well-known that $R_k(G)^\times = \{\pm \chi \mid \chi \in \Hom(G,k^\times)\}$, that is, the units are given by virtual degree one Brauer characters. We refer the reader to \cite{BC23} for more details on $T(kG)$ and $R_k(G)$.
	\end{definition}

	One of the tools used to study $p$-permutation modules is the \textit{Brauer construction} (also called the \textit{Brauer quotient}).

	\begin{definition}
        \begin{enumerate}
            \item Let $R$ be either $\bigO$ or $k$, and let $M$ be a $RG$-module. Given any subgroups $Q \leq P \leq G$, the \textit{trace map} $\tr^P_Q: M^Q \to M^P$ is given by \[\tr^P_Q: m \mapsto \sum_{x \in [P/Q]} x\cdot m.\]

            \item Given any subgroup $P \leq G$, the \textit{Brauer construction} at $P$, $-(P): {}_{ \bigO G}\mathbf{mod} \to {}_{k[N_G(P)]}\mathbf{mod}$ is defined by \[M(P) := M^P/\sum_{Q < P} \text{tr}^P_Q(M^Q) + J(\bigO) M^P,\] where $J(\bigO)$ denotes the Jacobson radical of $\bigO$. Given any $\bigO G$-module $M$, $P \leq \ker M(P)$, so the Brauer construction may also be considered an additive functor $(-)(P): {}_{\bigO G}\mathbf{mod} \to {}_{k[N_G(P)/P]}\mathbf{mod}$.

            \item We also have a functor $-(P): {}_{kG}\mathbf{mod} \to {}_{k[N_G(P)/P]}\mathbf{mod}$ defined by \[M(P) := M^P/\sum_{Q < P} \text{tr}^P_Q(M^Q),\] which we also refer to as the \textit{Brauer construction}. We have a commutative diagram as follows.

            \begin{figure}[H]
                \centering
                \begin{tikzcd}
                    {}_{\calO G}\textbf{mod} \ar[d, "k \otimes_\bigO -"'] \ar[dr, "-(P)"] \\
                    {}_{kG}\textbf{mod} \ar[r, "-(P)"]& {}_{k[N_G(P)/P]}\textbf{mod}
                \end{tikzcd}
            \end{figure}

            In this paper, we primarily use the Brauer construction  $-(P): {}_{kG}\mathbf{mod} \to {}_{k[N_G(P)/P]}\mathbf{mod}$.
        \end{enumerate}

	\end{definition}

	If $P$ is not a $p$-subgroup of $G$, for all $kG$-modules $M$,  $M(P) = 0$, thus we restrict our attention to taking Brauer quotients at $p$-subgroups of $G$. The Brauer construction restricts to a functor $(-)(P): {}_{kG}\triv \to {}_{k[N_G(P)/P]}\triv$. It extends to a functor of chain complexes, $Ch({}_{kG}\mathbf{mod}) \to Ch({}_{k[N_G(P)/P]}\mathbf{mod})$ and similarly for ${}_{kG}\triv$. This functor is in general not exact.

	\begin{prop}\label{brauercommuteswithdualsandconj}
		\begin{enumerate}
			\item Let $M$ be a $kG$-module and $P\in s_p(G)$. $M(P)^* \cong (M^*)(P)$ is a natural isomorphism of $k[N_G(P)/P]$-modules.
			\item Given any $kG$-module $M$, $P \in s_p(G)$ with $P \leq H \leq G$, $\Res^{N_G(P)/P}_{N_H(P)/P} \big(M(P)\big) = \left(\Res_H^G M\right)(P)$.
			\item Let $M$ be a $kG$-module, $P \in s_P(G)$, and $x \in G$. ${}^x(M(P)) \cong M({}^xP)$ is a natural isomorphism of $k[{}^x(N_G(P)/P)]$-modules.
		\end{enumerate}
	\end{prop}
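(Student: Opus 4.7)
The plan is to verify each claim directly from the definition $M(P) = M^P/\sum_{Q<P} \tr^P_Q(M^Q)$ by constructing an explicit natural transformation and checking that it is an isomorphism.

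For (a), I would use the canonical $k$-bilinear pairing $M \otimes_k M^* \to k$. Restricting to $P$-fixed subspaces gives a pairing $M^P \times (M^*)^P \to k$. The key computation is that for $Q \lneq P$, $n \in M^Q$, and $f \in (M^*)^P$,
\[\langle \tr^P_Q n,\, f \rangle \;=\; \sum_{x \in [P/Q]} \langle xn,\, f \rangle \;=\; [P:Q]\,\langle n,\, f \rangle \;=\; 0 \text{ in } k,\]
since $[P:Q]$ is a positive power of $p$. By the symmetric calculation, $\sum_{Q<P} \tr^P_Q (M^*)^Q$ pairs trivially with $M^P$, so the pairing descends to $M(P) \otimes_k (M^*)(P) \to k$. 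To finish, one shows that this induced pairing is non-degenerate, yielding the desired isomorphism $\Phi_M : (M^*)(P) \xrightarrow{\sim} M(P)^*$. Non-degeneracy follows by dualizing the defining short exact sequence $0 \to \sum_{Q<P}\tr^P_Q(M^Q) \to M^P \to M(P) \to 0$ and identifying $(M^*)(P)$ with the annihilator of $\sum_{Q<P}\tr^P_Q M^Q$ inside $(M^*)^P$, via the natural isomorphism $(M^*)^P \cong (M_P)^*$ and a dimension count. Naturality and $N_G(P)/P$-equivariance of $\Phi_M$ are immediate from the construction.

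Part (b) is essentially tautological at the level of underlying $k$-spaces: for $P \leq H \leq G$, both $M^P$ and $\tr^P_Q M^Q$ depend only on $M$ viewed as a $kP$-module, so $M(P)$ and $(\Res^G_H M)(P)$ have the same underlying $k$-vector space. Under the canonical inclusion $N_H(P)/P = (H \cap N_G(P))/P \hookrightarrow N_G(P)/P$, the $k[N_H(P)/P]$-action on the latter is the restriction of the former, giving the required equality.

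For (c), the isomorphism is induced by left multiplication by $x$. The $k$-linear map $\varphi_x : M \to M$, $m \mapsto xm$, carries $M^P$ bijectively onto $M^{{}^xP}$ (since $(xhx\inv)(xm) = x(hm) = xm$ for $h \in P$) and intertwines $\tr^P_Q$ with $\tr^{{}^xP}_{{}^xQ}$; thus it descends to a $k$-linear isomorphism $M(P) \to M({}^xP)$. Under the conjugation isomorphism $c_x : N_G(P)/P \xrightarrow{\sim} N_G({}^xP)/{}^xP = {}^x(N_G(P)/P)$, this map intertwines the actions: for $\bar g \in N_G(P)/P$ and $\bar m \in M(P)$ we compute $\varphi_x(\bar g\, \bar m) = \overline{xgm} = \overline{(xgx\inv)(xm)} = c_x(\bar g)\cdot \varphi_x(\bar m)$. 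The main obstacle is the non-degeneracy argument in (a); parts (b) and (c) are essentially formal once the explicit maps are written down.
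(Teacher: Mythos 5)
Parts (b) and (c) of your argument are correct and complete. The problem is in (a), exactly at the step you defer: non-degeneracy of the descended pairing. The identification you propose is incoherent as stated: by your own key computation, \emph{every} element of $(M^*)^P$ annihilates $\sum_{Q<P}\tr^P_Q(M^Q)$, so "the annihilator of $\sum_{Q<P}\tr^P_Q(M^Q)$ inside $(M^*)^P$" is all of $(M^*)^P$, whereas $(M^*)(P)$ is a (generally proper) quotient of $(M^*)^P$; the isomorphism $(M^*)^P\cong (M_P)^*$ is true but does not connect these two spaces. More seriously, no dimension count can rescue the argument for arbitrary $kG$-modules, because in general $\dim_k M(P)\neq \dim_k (M^*)(P)$. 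Take $p=2$, $G=P=\langle a,b\rangle\cong C_2\times C_2$, and $M=\Omega(k)$ the augmentation ideal of $kP$, with $\sigma=1+a+b+ab$. Then $M^P=k\sigma$ and $\sigma=\tr^P_{\langle a\rangle}(1+a)$, so $M(P)=0$. On the other hand $M^*\cong kP/k\sigma$, whose $P$-fixed points are the two-dimensional space $I/k\sigma$ (where $I$ is the augmentation ideal), while $\tr^P_1(M^*)=\overline{\sigma kP}=0$ and $\tr^P_{\langle a\rangle}(M^*)^{\langle a\rangle}=\overline{(1+b)I}=\overline{k\sigma}=0$ (similarly for the other order-two subgroups); hence $(M^*)(P)\cong I/k\sigma\neq 0 = M(P)^*$. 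So statement (a), read literally for all $kG$-modules, is false, and your pairing is identically zero on two nonzero one- or two-dimensional spaces in such examples.

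The statement is true, and is all the paper ever uses, for $p$-permutation modules, and there your pairing strategy does close. For a permutation module $M=kX$, the $P$-fixed points have basis the $P$-orbit sums, $\sum_{Q<P}\tr^P_Q(M^Q)$ is spanned by the orbit sums of the orbits of length $>1$, so $M(P)$ has basis the image of $X^P$ (this is Proposition \ref{brauerppermproperties}(b)); moreover $M^*$ is again a permutation module on the dual basis $\{x^*\}$ with $g\cdot x^*=(gx)^*$, so $(M^*)(P)$ has basis the image of $\{x^*\mid x\in X^P\}$, and the descended pairing satisfies $\langle \bar x,\overline{y^*}\rangle=\delta_{x,y}$ for $x,y\in X^P$, hence is perfect. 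Since the Brauer construction, duality, and your pairing are all additive in $M$, perfectness passes to direct summands of permutation modules, i.e.\ to all $p$-permutation modules, and naturality and $N_G(P)/P$-equivariance are as you say. You should therefore either restrict the hypothesis of (a) to $p$-permutation modules or supply this basis argument in place of the dimension count; as written, the non-degeneracy claim is a genuine gap.
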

	\begin{proof}
	    All of these follow directly from the definition of the Brauer construction and are easily verified.
	\end{proof}

	The Brauer construction behaves especially well with $p$-permutation modules. The following are well-known properties.

	\begin{prop} \label{brauerppermproperties}
        Let $M,N$ be (finitely generated) $p$-permutation $kG$-modules and let $P \in s_p(G)$.
		\begin{enumerate}
			\item $(M\otimes_k N)(P) \cong M(P) \otimes_k N(P)$ as $k[N_G(P)/P]$-modules. This isomorphism is natural in $M$ and $N$.
			\item If $M \cong kX$ is a permutation module, then $M(P)$ is a permutation $k[N_G(P)/P]$-module with basis the image of the $P$ fixed points $X^P$ of $X$ under the quotient map $M^P \to M(P)$. In particular, $k[G/P](P) \cong k[N_G(P)/P]$ naturally as $k[N_G(P)/P]$-modules.
			\item If $M$ is a trivial source module, then $M(P) \neq 0$ if and only if $P$ is contained in a vertex of $M$.
			\item Let $Q \trianglelefteq P$. Then $M(P) \cong \left(M(Q)\right)(P)$.
		\end{enumerate}
	\end{prop}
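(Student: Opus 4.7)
My plan is to prove (2) first by a direct computation on basis elements, then deduce (1) and (3) by reducing to the permutation case via the fact that a $p$-permutation module is a direct summand of a permutation module. Statement (4) is independent of the others and follows from the transitivity of the trace map through the normal subgroup $Q$.

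For (2), if $M = kX$ with $X$ a $G$-set, the key observation is that $M^P$ has a $k$-basis given by $P$-orbit sums $\sigma_{Px} := \sum_{y \in Px} y$ for $x$ running over orbit representatives of $X/P$. For $Q < P$ and $x \in X$ with $P_x = \operatorname{stab}_P(x)$, one has $\tr^P_Q(\sigma_{Qx}) = [P : Q P_x] \sigma_{Px}$ if one works carefully, but more simply, $\sigma_{Px}$ lies in the image of $\tr^P_Q$ for some $Q<P$ exactly when $P_x \neq P$, i.e. when $x \notin X^P$. Consequently the images of the singleton orbits $\{x\}$ with $x \in X^P$ descend to a $k$-basis of $M(P)$, giving the desired statement. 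The specialization to $M = k[G/P]$ follows because $(G/P)^P = N_G(P)/P$.

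Part (1) follows from (2) by writing $M \oplus M'$ and $N \oplus N'$ as permutation modules $kX$, $kY$: naturality of the candidate map $M(P) \otimes N(P) \to (M \otimes N)(P)$, induced from $M^P \otimes N^P \to (M \otimes_k N)^P$, reduces checking the isomorphism to the case of permutation modules, where it is clear since $(X \times Y)^P = X^P \times Y^P$. For (3), let $M$ be indecomposable with vertex $P_0$; then $M$ is a direct summand of $k[G/P_0]$, so $M(P)$ is a direct summand of $k[G/P_0](P)$, and by (2) the latter has basis the $P$-fixed cosets $\{gP_0 : P \leq {}^g P_0\}$, which is empty unless $P \leq_G P_0$. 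Conversely, that $M(P_0) \neq 0$ is the content of Broué's characterization of vertices of trivial source modules (see \cite[Corollary 5.10.4]{L181}), from which one obtains $M(P) \neq 0$ whenever $P \leq_G P_0$ by combining with conjugation (Proposition \ref{brauercommuteswithdualsandconj}(c)) and restriction to normalizers.

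For (4), since $Q \trianglelefteq P$, one has $M^P = (M^Q)^{P/Q}$, and the trace maps satisfy $\tr^P_R = \tr^P_{QR} \circ \tr^{QR}_R$ for $R \leq P$. Thus $\sum_{R < P} \tr^P_R(M^R)$ maps onto $\sum_{\bar{R} < P/Q} \tr^{P/Q}_{\bar{R}}((M(Q))^{\bar R})$ after passage to the quotient by $\sum_{R'<Q} \tr^Q_{R'}(M^{R'})$, and one obtains a natural isomorphism $M(P) \cong (M(Q))(P/Q) = (M(Q))(P)$, where the last equality uses that $Q$ acts trivially on $M(Q)$.

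The main obstacle is the nonvanishing half of (3), i.e., $M(P_0) \neq 0$ when $P_0$ is a vertex; this genuinely uses more than the formal properties above and relies on the classical vertex-source theory for trivial source modules.
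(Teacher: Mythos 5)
The paper does not actually prove this proposition; it cites \cite[Propositions 5.8.10, 5.8.1, 5.10.3, 5.8.5]{L181} for the four parts, so your self-contained argument is necessarily a different route, and its overall architecture --- prove (b) by a basis computation, reduce (a), (c), (d) to the permutation case via direct summands and naturality, and isolate the nonvanishing half of (c) as the one genuinely nontrivial input --- is exactly the standard one and is sound.

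Two steps need tightening. First, in (b) your parenthetical trace formula is off: for $x \in X$ one has $\tr^P_Q(\sigma_{Qx}) = [P_x : Q \cap P_x]\,\sigma_{Px}$, not $[P : Q P_x]\,\sigma_{Px}$ (your version would give coefficient $1$ when $x \in X^P$). You do not use that formula for spanning, but you do need its consequence for linear independence, which your ``consequently'' elides: every element of $\tr^P_Q(kX^Q)$ with $Q < P$ has each $P$-fixed basis element of $X$ appearing with coefficient divisible by $[P:Q] \equiv 0$ in $k$, so the images of $X^P$ are independent in $M(P)$, not merely spanning. Second, in (d) the assertion that $\sum_{R<P}\tr^P_R(M^R)$ ``maps onto'' the corresponding trace ideal of $M(Q)$ conceals the real issue: for an arbitrary module it is not clear that $M^P \to (M(Q))(P/Q)$ is surjective, since a $P/Q$-fixed class in $M(Q)$ need not have a $P$-fixed representative in $M^Q$; what is automatic is only a well-defined natural map $M(P) \to (M(Q))(P/Q)$ (using $\tr^P_R = \tr^P_{QR}\circ\tr^{QR}_R$ when $QR < P$, and $\tr^P_R(M^R) \subseteq \tr^Q_{Q\cap R}(M^{Q\cap R})$ when $QR = P$). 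The hypothesis that $M$ is $p$-permutation, which your (d) never invokes, is what closes the gap: the natural map is additive, so by passing to a permutation module $kX \cong M \oplus M'$ and using (b) together with $(X^Q)^{P/Q} = X^P$ it is an isomorphism there, hence on the summand $M$. Finally, in (c) the phrase ``by combining with conjugation and restriction to normalizers'' is vague; the clean deduction of $M(P) \neq 0$ for $P \leq P_0$ from $M(P_0) \neq 0$ uses (d) along a subnormal chain $P \trianglelefteq \cdots \trianglelefteq P_0$ inside the $p$-group $P_0$. With these repairs the proof is complete and has the advantage over the paper's citation of making visible exactly where the $p$-permutation hypothesis and the vertex theory enter.
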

    \begin{proof}
        (a) is \cite[Proposition 5.8.10]{L181}. (b) follows from \cite[Proposition 5.8.1]{L181}. (c) is \cite[Proposition 5.10.3]{L181}. (d) is \cite[Proposition 5.8.5]{L181}.
    \end{proof}

	The following theorems will be crucial in the sequel.

	\begin{theorem}\label{permsplit}
		Let $M, N$ be $p$-permutation $kG$-modules, and let $f: M \to N$ be an injective (resp. surjective) homomorphism. Then $f$ is split injective (resp. surjective) if and only if $f(P)$ is injective (resp. surjective) for all $p$-subgroups $P\in s_p(G)$.
	\end{theorem}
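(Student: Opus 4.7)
The plan is to prove the forward direction immediately from functoriality of the Brauer construction, and to tackle the backward direction by first reducing the injective case to the surjective one via duality, then reducing the surjective case to the case where $G$ is a $p$-group via a Sylow argument, and finally handling the $p$-group case by an induction on subgroup order. The forward direction is routine: if $s$ splits $f$, then $f(P)s(P) = (fs)(P) = \id_{N(P)}$ (or $s(P)f(P) = \id_{M(P)}$ in the injective case) by functoriality. For the injective case of the converse, $f$ is split injective if and only if $f^*$ is split surjective, and by Proposition \ref{brauercommuteswithdualsandconj}(a) each $f(P)$ is injective precisely when each $f^*(P)$ is surjective, so it suffices to treat the surjective case.

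For the Sylow reduction, let $S \in \Syl_p(G)$. By Proposition \ref{brauercommuteswithdualsandconj}(b), for every $p$-subgroup $P \leq S$ the map $(\Res^G_S f)(P)$ is a restriction of $f(P)$ and is therefore surjective. Assuming the $p$-group case, $\Res^G_S f$ admits a $kS$-linear splitting $\sigma$. Then the averaged map $s := \frac{1}{[G:S]} \sum_{g \in [G/S]} g \sigma g^{-1} : N \to M$ is $kG$-linear and satisfies $fs = \id_N$, using that $[G:S]$ is coprime to $p$ and hence invertible in $k$, together with the $kG$-linearity of $f$.

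It remains to prove the $p$-group case, so suppose $G = S$ is a $p$-group. Then $M$ and $N$ are permutation modules by the proposition preceding the theorem, so write $N = \bigoplus_i k[S/H_i]$. Since $\Hom_{kS}(k[S/H], X) \cong X^H$ naturally, a splitting of $f$ exists as soon as $f^H: M^H \to N^H$ is surjective for every $H \leq S$, which I prove by induction on $|H|$. The base case $H = 1$ is the hypothesis, and for general $H$, given $n \in N^H$, surjectivity of $f(H)$ provides $m \in M^H$ with $f(m) = n + \sum_i \tr^H_{Q_i}(n_i)$ for some $Q_i < H$ and $n_i \in N^{Q_i}$; by induction each $n_i$ lifts to some $m_i \in M^{Q_i}$, and then $m - \sum_i \tr^H_{Q_i}(m_i) \in M^H$ is a preimage of $n$. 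The main obstacle is this inductive step, which converts Brauer-quotient-level surjectivity into genuine lifts via transitivity of the trace; everything else is formal or a standard averaging argument.
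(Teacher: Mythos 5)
Your proof is correct. The reduction of the injective case to the surjective case via $k$-duality and Proposition \ref{brauercommuteswithdualsandconj}(a) is exactly the argument in the paper. The difference lies in the surjective case: the paper simply cites \cite[Proposition 5.8.11]{L181}, whereas you prove it in full. Your three-step argument for that case is sound: the restriction to a Sylow subgroup $S$ loses nothing because $(\Res^G_S f)(P)$ is a restriction of $f(P)$ by Proposition \ref{brauercommuteswithdualsandconj}(b), and the averaging $s = \frac{1}{[G:S]}\sum_{g\in[G/S]} g\sigma g^{-1}$ upgrades a $kS$-splitting to a $kG$-splitting since $[G:S]$ is invertible in $k$; for $p$-groups the identification $\Hom_{kS}(k[S/H],X)\cong X^H$ correctly reduces splitting to surjectivity of $f^H$ for each $H$ occurring in a permutation decomposition of $N$; and the induction on $|H|$, which lifts the error term $\sum_i \tr^H_{Q_i}(n_i)$ using $\tr^H_Q\circ f = f\circ \tr^H_Q$, is the standard way to pass from surjectivity of the Brauer quotients $f(H)$ to surjectivity of the fixed-point maps $f^H$. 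What your version buys is a self-contained proof in place of an external reference (and it makes visible that the a priori surjectivity hypothesis is subsumed by surjectivity of $f(1)=f$); what the paper's version buys is brevity.
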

    \begin{proof}
        \cite[Proposition 5.8.11]{L181} implies the result for the surjective case. Now, since $k$ is a field, $f: M\to N$ is split injective if and only if $f^*: N^* \to M^*$ is split surjective if and only if $f^*(P)$ is surjective for all $P \in s_p(G)$ if and only if $f(P)^*$ is surjective for all $P \in s_p(G)$ (due to Proposition \ref{brauercommuteswithdualsandconj} (a)) if and only if $f(P)$ is injective for all $P \in s_p(G)$.
    \end{proof}

	\begin{theorem}\label{ppermvertex}
		Let $U$ be an indecomposable trivial source $kG$-module with vertex $P$, and let $M$ be a $p$-permutation $kG$-module. Let $f: U \to M$ be a $kG$-module homomorphism. The following are equivalent:
		\begin{itemize}
			\item $f: U \to M$ is split injective.
			\item $f(P): U(P) \to M(P)$ is injective.
		\end{itemize}
		Dually, let $g: M\to U$ be a $kG$-module homomorphism. The following are equivalent:
		\begin{itemize}
			\item $g: M \to U$ is split surjective.
			\item $g(P): M(P)\to U(P)$ is surjective.
		\end{itemize}
	\end{theorem}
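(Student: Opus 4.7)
The forward direction in each equivalence is an immediate consequence of functoriality of the Brauer construction: if $h : M \to U$ satisfies $h \circ f = \id_U$ (respectively $g \circ h = \id_U$), applying $(-)(P)$ yields $h(P) \circ f(P) = \id_{U(P)}$ (respectively $g(P) \circ h(P) = \id_{U(P)}$), so $f(P)$ is split injective (in particular injective), and $g(P)$ is split surjective.

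For the converse of the first equivalence, the plan is to lift a splitting of $f(P)$ across the Brauer functor. First, I would show that $U(P)$ is an injective $k[N_G(P)/P]$-module: since $U$ is an indecomposable trivial source $kG$-module with vertex $P$, $U$ is a direct summand of $\Ind_P^G k = k[G/P]$, so by Proposition~\ref{brauerppermproperties}(b), $U(P)$ is a direct summand of $(k[G/P])(P) = k[N_G(P)/P]$. Thus $U(P)$ is indecomposable projective over $k[N_G(P)/P]$, and as group algebras are self-injective (Frobenius), $U(P)$ is also injective. Consequently, the injection $f(P) : U(P) \hookrightarrow M(P)$ splits: there exists $h' \in \Hom_{k[N_G(P)/P]}(M(P), U(P))$ with $h' \circ f(P) = \id_{U(P)}$. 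Next, I would lift $h'$ to $h \in \Hom_{kG}(M, U)$ with $h(P) = h'$, using the surjectivity of the Brauer evaluation map
\[
\Hom_{kG}(M, U) \twoheadrightarrow \Hom_{k[N_G(P)/P]}(M(P), U(P)),
\]
which holds for $U$ an indecomposable trivial source module with vertex $P$ and $M$ a $p$-permutation module; this is a standard consequence of Broué's theorem on the Brauer functor.

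With $h$ in hand, I would verify that $h \circ f$ is an automorphism of $U$. Since $U$ is indecomposable, $\End_{kG}(U)$ is a local ring, and the Brauer ring homomorphism $\Phi : \End_{kG}(U) \to \End_{k[N_G(P)/P]}(U(P))$ sends $\id_U$ to $\id_{U(P)}$; hence $\ker\Phi$ is a proper two-sided ideal of $\End_{kG}(U)$, necessarily contained in its maximal ideal. Combined with the endomorphism case of the surjectivity from the previous step, a standard argument (if $\Phi(\mathfrak{m})$ were not proper, one would obtain $r \in \mathfrak{m}$ with $\Phi(r)=1$, forcing $1-r \in \ker\Phi \subseteq \mathfrak{m}$ and hence $1 \in \mathfrak{m}$) shows $\Phi$ is local, and in particular reflects units. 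Since $\Phi(h \circ f) = \id_{U(P)}$ is a unit, $h \circ f$ is an automorphism of $U$, and $(h \circ f)^{-1} \circ h$ is then a left inverse of $f$, proving $f$ is split injective.

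The dual statement for $g : M \to U$ follows by applying the first equivalence to $g^* : U^* \to M^*$: duality preserves indecomposability, the trivial source property, and vertex, so $U^*$ is again indecomposable trivial source with vertex $P$; by Proposition~\ref{brauercommuteswithdualsandconj}(a), $g^*(P) \cong g(P)^*$; and $k$-duality interchanges split injections with split surjections. Hence $g$ is split surjective iff $g^*$ is split injective iff $g(P)^*$ is injective iff $g(P)$ is surjective. The main obstacle is invoking the surjectivity of the Brauer evaluation map on Hom groups: this is a standard but nontrivial result (a form of Broué's theorem on the Brauer functor applied to trivial source modules) that is not among the propositions explicitly stated in the preliminaries; once it is granted, the remainder of the argument is a routine combination of the injectivity of $U(P)$ and local-ring formalism.
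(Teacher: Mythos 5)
Your proof is correct, but it takes a genuinely different route from the paper: the paper disposes of the injective case by citing \cite[Proposition 5.10.7]{L181} and then dualizes (exactly as you do for the surjective case), whereas you reconstruct a self-contained proof of the cited result. The one step you flag as unproved --- surjectivity of the evaluation map $\Hom_{kG}(M,U)\to\Hom_{k[N_G(P)/P]}(M(P),U(P))$ --- is indeed true under your hypotheses, but it is worth recording why, since the analogous statement for an arbitrary $p$-permutation target is \emph{false} in general: the image of $\Hom_{kG}(M,N)$ in $\Hom_k(M(P),N(P))$ need not exhaust the $N_G(P)/P$-equivariant maps (already for permutation modules, a $G$-orbit sum need not map onto all $N_G(P)$-orbit sums of the $P$-fixed points). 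What saves you is precisely the vertex hypothesis: since $U$ has vertex $P$, the module $\Hom_k(M,U)\cong M^*\otimes_k U$ is relatively $P$-projective, so Higman's criterion gives $\Hom_{kG}(M,U)=\tr^G_P\bigl(\Hom_{kP}(M,U)\bigr)$; the standard identity $\operatorname{br}_P\circ\tr^G_P=\tr^{N_G(P)/P}_1\circ\operatorname{br}_P$ then shows that the image of the evaluation map is exactly the set of relatively $1$-projective homomorphisms $M(P)\to U(P)$, and since $U(P)$ is projective over $k[N_G(P)/P]$, every homomorphism into it is of this form. With that lemma in place the rest of your argument is sound; two small economies: you never need indecomposability of $U(P)$ (projectivity alone gives injectivity of the module), and you never need surjectivity of $\End_{kG}(U)\to\End_{k[N_G(P)/P]}(U(P))$ for the unit-reflection step, since properness of the kernel inside the local ring $\End_{kG}(U)$ already forces it into the maximal ideal. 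Your route buys a complete argument at the price of one nontrivial (but standard) lemma; the paper's route is a citation.
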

    \begin{proof}
        The injective case is precisely \cite[Propositionn 5.10.7]{L181}, and since $k$ is a field, the dual statement follows by a similar argument as in the previous proof.
    \end{proof}

	For a more detailed treatment of $p$-permutation modules, the trivial source ring, and the Brauer quotient, we direct the reader to \cite{CL23}, \cite{L181}, or \cite{BC23}.

	\subsection{Chain complexes}

	Next, we recall the definitions of the tensor product and internal hom of chain complexes, following \cite{W94}. We refer the reader to \cite{W94} or \cite[Sections 1.17 and 1.18]{L181} for a more comprehensive overview of homological algebra. For this section, let $R$ be a commutative ring.

	\begin{definition}
		Let $R$ be a commutative ring and $C, D$ two bounded chain complexes of $RG$-modules with differentials denoted by $c_i, d_j$ respectively.
		\begin{enumerate}
			\item The \textit{tensor product} is a chain complex of $RG$-modules, denoted $C\otimes_R D$, and is defined as follows.
			\begin{enumerate}
				\item $(C\otimes_R D)_n = \bigoplus_{i+j = n} C_i \otimes_R D_j$.
				\item $d^C_{i,j}: C_i \otimes_R D_j \to C_{i-1} \otimes_R D_j$ is the homomorphism $c_i \otimes \id$.
				\item $d^D_{i,j}: C_i \otimes_R D_j \to C_{i} \otimes_R D_{j-1}$ is the homomorphism $(-1)^i\id \otimes d_j$.
			\end{enumerate}

			\item The \textit{internal hom} is a chain complex of $RG$-modules, denoted $\Hom_{R}(C,D)$, and is defined as follows.
			\begin{enumerate}
				\item $\Hom_R(C,D)_n = \bigoplus_{j-i = n} \Hom_R(C_i, D_j)$.
				\item $d^C_{i,j}: \Hom_R(C_i, D_j) \to \Hom_R(C_{i+1}, D_j)$ is the homomorphism $(-1)^{1 + j - i}(c_{i+1})^*$, where $(-)^*$ denotes precomposition.
				\item $d^D_{i, j}: \Hom_R(C_i, D_j) \to \Hom_R(C_i, D_{j-1})$ is the homomorphism $(d_j)_*$, where $(-)_*$ denotes postcomposition.
			\end{enumerate}
			Given a $RG$-module $M$ and $i \in \Z$, we write $M[i]$ for the chain complex with $M$ in degree $i$ and zero modules in all other degrees. The \textit{dual chain complex} is $C^* = \Hom_R(C, R[0])$ with the above sign conventions.
		\end{enumerate}
	\end{definition}

	Two chain complexes $C,D$ of $kG$-modules are homotopy equivalent, denoted $C \simeq D$, if and only if they are isomorphic in the homotopy category $K^b({}_{RG}\mathbf{mod})$. If the Krull-Schmidt theorem holds, equivalently, there exist contractible complexes $C', D'$ of $kG$-modules such that $C \oplus C' \cong D \oplus D'$.

	Recall that a $RG$-module homomorphism $f: M \to N$ is \textit{split} if and only if there exists a homomorphism $s: N \to M$ such that $f = f \circ s \circ f$. In this case, we have $M = \ker f \oplus \im s\circ f$ and $N = \im f \oplus \ker f\circ s$. A chain complex is \textit{split} if and only if all of its differentials are split. The following alternative characterization is well-known.

	\begin{prop}{\cite[Propopsition 1.18.15]{L181}}
		A chain complex $C \in Ch({}_{RG}\mathbf{mod})$ is split if and only if $C\simeq H(C)$, with $H(C)$ viewed as a chain complex via $(H(C))_i := H_i(C)$ and with zero maps as differentials. In particular, a chain complex is contractible if and only if it is split acyclic.
	\end{prop}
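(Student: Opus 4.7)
The plan is to prove both directions of the biconditional by explicit constructions, then deduce the statement on contractible complexes as a corollary.

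For the reverse direction, suppose $C \simeq H(C)$. Pick chain maps $\iota \colon H(C) \to C$ and $\pi \colon C \to H(C)$, together with a degree $+1$ map $h$ satisfying the chain homotopy relation $\iota_i \pi_i - \id_{C_i} = d_{i+1} h_i + h_{i-1} d_i$. Because $H(C)$ has zero differentials and $\pi$ is a chain map, $\pi_{i-1}$ vanishes on $B_{i-1} := \im d_i$. Applying the homotopy relation to $b := d_i c \in C_{i-1}$ therefore gives $-d_i c = d_i h_{i-1}(d_i c)$ for every $c \in C_i$, which says exactly that $d_i \circ (-h_{i-1}) \circ d_i = d_i$. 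Thus $-h_{i-1} \colon C_{i-1} \to C_i$ splits $d_i$, so $C$ is split.

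For the forward direction, choose for each $i$ a splitting $s_i \colon C_{i-1} \to C_i$ with $d_i s_i d_i = d_i$. The endomorphism $e_i := s_i d_i$ is idempotent with $\ker e_i = \ker d_i = Z_i$ (if $s_i d_i c = 0$, applying $d_i$ gives $d_i c = 0$), so $C_i = Z_i \oplus C_i''$ for $C_i'' := \im e_i$, and $d_i$ maps $C_i''$ isomorphically onto $B_{i-1}$. Likewise $f_i := d_{i+1} s_{i+1}$ is idempotent on $C_i$ with image $B_i$, acting as the identity on $B_i$; setting $H_i' := Z_i \cap \ker f_i$, an easy check gives $Z_i = B_i \oplus H_i'$, and the composite $H_i' \hookrightarrow Z_i \twoheadrightarrow H_i(C)$ is an isomorphism. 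Hence $C_i = H_i' \oplus B_i \oplus C_i''$, with $d_i$ vanishing on the first two summands and sending $C_i''$ isomorphically onto $B_{i-1}$. Consequently $C$ decomposes as an internal direct sum of chain complexes $\bigl(\bigoplus_i H_i'[i]\bigr) \oplus \bigl(\bigoplus_i (C_{i+1}'' \xrightarrow{\sim} B_i)\bigr)$; the first factor is canonically isomorphic to $H(C)$ with zero differentials, and the second is a direct sum of contractible two-term complexes. Therefore $C \simeq H(C)$.

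The final claim follows at once: $C$ is contractible if and only if $C \simeq 0$, which by the biconditional just proved is equivalent to $C$ being split and satisfying $H(C) = 0$, i.e.\ split acyclic. The main obstacle in this plan is the bookkeeping in the forward direction, namely verifying that the pointwise decompositions $C_i = H_i' \oplus B_i \oplus C_i''$ are compatible with the differentials and thus assemble into an honest direct sum decomposition of chain complexes; everything else is essentially formal manipulation of the chosen splittings and the given homotopy.
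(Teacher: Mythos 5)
Your proof is correct, and the paper itself offers no argument for this proposition beyond the citation to \cite[Proposition 1.18.15]{L181}; your argument is essentially the standard one found there (extracting idempotents $s_id_i$ and $d_{i+1}s_{i+1}$ from the splittings to decompose each $C_i$ as $H_i'\oplus B_i\oplus C_i''$, and reading off the homotopy in the converse direction). Both directions and the deduction of the contractible case check out.
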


	Given a bounded chain complex $C$, if $C_i \neq 0$, $C_{i'} = 0$ for all $i' > i$, $C_j \neq 0$, and $C_{j'} = 0$ for all $j' < j$, the \textit{length} of $C$ is $(i - j) + 1$. For example, the contractible chain complex of $kG$-modules \[\dots \to 0 \to k \xrightarrow{\id} k \to 0\to \cdots\] has length 2.

    The following characterization of bounded contractible chain complexes will be used throughout the paper.

    \begin{prop}
        A bounded chain complex $C \in K^b({}_{RG}\mathbf{mod})$ is contractible, i.e. $C \simeq 0$, if and only if $C$ is isomorphic to a finite direct sum of chain complexes of the form $\dots \to 0 \to M \xrightarrow{\cong} M \to 0\to \dots$.
    \end{prop}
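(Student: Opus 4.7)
The plan is to prove the ``only if'' direction by induction on the length of $C$; the converse is immediate, since each complex $0 \to M \xrightarrow{\id} M \to 0$ is contractible via the evident null-homotopy, and a finite direct sum of contractible complexes is contractible. For the base case, if $C = 0$ it is the empty direct sum; if $C$ has length $1$, then $C = M[i]$ for some module $M$, and contractibility forces $M = 0$, reducing to the previous case.

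For the inductive step, suppose $C$ has length $\geq 2$ and let $b$ denote the top degree in which $C$ is nonzero. Since $C$ is contractible, $H_b(C) = 0$, which forces $d_b\colon C_b \to C_{b-1}$ to be injective. By the preceding proposition $C$ is split, so $d_b$ is in fact split injective; write $C_{b-1} = \im d_b \oplus K$. The relation $d_{b-1}\circ d_b = 0$ implies that $d_{b-1}$ vanishes on $\im d_b$ and hence factors through $K$. This yields a decomposition in $Ch^b({}_{RG}\mathbf{mod})$,
\[
C \;\cong\; \bigl(0 \to C_b \xrightarrow{\,d_b\,} \im d_b \to 0\bigr) \,\oplus\, C',
\]
where the first summand has the required form $M \xrightarrow{\cong} M$ (identifying $C_b$ with $\im d_b$ via $d_b$) and $C'$ coincides with $C$ below degree $b-1$, equals $K$ in degree $b-1$, and is zero in degrees $\geq b$.

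Direct summands of contractible complexes are contractible, since a null-homotopy $h$ of $\id_C$ transports to a null-homotopy $p \circ h \circ i$ of $\id_{C'}$ via the inclusion $i\colon C' \hookrightarrow C$ and projection $p\colon C \twoheadrightarrow C'$. Thus $C'$ is contractible of strictly smaller length, and the induction hypothesis decomposes it as a finite sum of length-$2$ identity complexes. The main technical point is to check that the proposed decomposition $C \cong C^{(1)} \oplus C'$ genuinely respects the differentials, which reduces entirely to the observation that $d_{b-1}|_{\im d_b} = 0$; once this is in hand the rest of the argument is routine, and the induction terminates because $C$ is bounded.
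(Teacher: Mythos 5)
Your proof is correct and follows essentially the same route as the paper's: induction on length, using that contractible means split acyclic to get split injectivity of the top differential and peel off a summand $C_b \xrightarrow{\cong} \im d_b$. You supply slightly more detail than the paper (checking that $d_{b-1}$ kills $\im d_b$ so the decomposition is one of chain complexes, and that direct summands of contractibles are contractible), but the argument is the same.
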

    \begin{proof}
        The reverse implication is clear. Suppose $C$ is contractible; equivalently it is split acyclic. Let $i \in \Z$ be the maximum integer for which $C_i \neq 0$. Since $C$ is split acyclic, $d_i: C_i \to C_{i-1}$ is split injective, so we have an isomorphism $C \cong (C_i \xrightarrow{\cong} C_i) \oplus C'$, where $(C_i \xrightarrow{\cong} C_i)$ is nonzero in degrees $i$ and $i-1$, and $C'$ is a chain complex with length one less than $C$. The result now follows by an inductive argument on the length of $C$.
    \end{proof}

	We state the version of the K\"unneth formula which will be critical for most of the techniques used in this paper. For more general versions, see \cite[Theorem 3.6.3 and Theorem 5.6.4]{W94} or \cite[Theorem 2.21.7]{L181}

	\begin{theorem}{(K\"unneth theorem)} \label{kunneth}
		Let $C$ and $D$ be complexes of $kG$-modules. We have a natural isomorphism of $kG$-modules \[\bigoplus_{p+q = n} H_p(C) \otimes_k H_q(D) \cong H_n(C\otimes_k D)\] for all $n \in \Z$.
	\end{theorem}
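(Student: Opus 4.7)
The plan is to construct the natural map on homology explicitly and then verify it is an isomorphism of $kG$-modules by leveraging the fact that $k$ is a field. First I would define
\[
\mu_n \colon \bigoplus_{p+q=n} H_p(C) \otimes_k H_q(D) \longrightarrow H_n(C \otimes_k D)
\]
on generators by $[x] \otimes [y] \mapsto [x \otimes y]$, where $x \in Z_p(C)$ and $y \in Z_q(D)$. A routine computation using the Leibniz sign convention in the definition of $C \otimes_k D$ shows that $x \otimes y$ is an $n$-cycle, and that the class $[x \otimes y]$ depends only on the homology classes of $x$ and $y$ (modifying $x$ by $c_{p+1}(x')$ changes $x \otimes y$ by the boundary of $x' \otimes y$ up to sign, and similarly in the other factor). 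Naturality in $C$ and $D$ is immediate from the formula.

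Next I would verify $G$-equivariance. Since the $G$-action on $C \otimes_k D$ is diagonal and commutes with the differential (this is why $C \otimes_k D$ is a chain complex of $kG$-modules in the first place), $G$ preserves cycles and boundaries, so $H_n(C \otimes_k D)$ inherits the diagonal action. For $g \in G$ we then have
\[
\mu_n\bigl(g \cdot ([x] \otimes [y])\bigr) = \mu_n([gx] \otimes [gy]) = [gx \otimes gy] = g \cdot \mu_n([x] \otimes [y]),
\]
so $\mu_n$ is $kG$-linear.

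Finally, I would show $\mu_n$ is a $k$-linear bijection, after which $kG$-linearity upgrades it to an isomorphism of $kG$-modules. Since $k$ is a field, every $k$-module is free, in particular flat, and every short exact sequence of $k$-modules splits. The standard argument then proceeds by applying $- \otimes_k D$ to the short exact sequence of complexes of $k$-vector spaces
\[
0 \longrightarrow Z(C) \longrightarrow C \longrightarrow B(C)[-1] \longrightarrow 0
\]
(where $Z(C)$ and $B(C)[-1]$ are equipped with zero differentials) and reading off the long exact sequence in homology; the connecting maps are induced by the differential of $C$, and the identifications $H_*(Z(C) \otimes_k D) = Z(C) \otimes_k H_*(D)$ and $H_*(B(C)[-1] \otimes_k D) = B(C)[-1] \otimes_k H_*(D)$ hold because $Z(C)$ and $B(C)$ are flat. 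Splicing the resulting exact sequences together and using that $\mathrm{Tor}^k_1 = 0$ produces the desired isomorphism on the level of $k$-modules, and one checks that this abstract isomorphism agrees with $\mu_n$ on generators.

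The only potentially fiddly step is the sign bookkeeping showing that $\mu_n$ is well-defined, but this is entirely mechanical. Alternatively, one can simply cite \cite[Theorem 3.6.3]{W94} for the $k$-module isomorphism and reduce the entire argument to the observation that the explicit formula $[x] \otimes [y] \mapsto [x \otimes y]$ is $G$-equivariant with respect to the diagonal action; this is the cleanest route given how standard the underlying Künneth formula is.
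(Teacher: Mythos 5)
Your proposal is correct, and the first half (the explicit chain-level map $[x]\otimes[y]\mapsto[x\otimes y]$, well-definedness via the sign bookkeeping, naturality, and the reduction of the $kG$-isomorphism claim to a $k$-linear bijection by $G$-equivariance of the formula) coincides with what the paper does. Where you diverge is in establishing the $k$-linear bijection: you run the standard hereditary-ring argument, tensoring the short exact sequence $0\to Z(C)\to C\to B(C)[-1]\to 0$ with $D$, invoking flatness and the vanishing of $\mathrm{Tor}^k_1$, and splicing long exact sequences. The paper instead exploits the field hypothesis more directly: every complex of $k$-vector spaces is split, so $D\simeq H(D)$ as complexes of $k$-modules, hence $C\otimes_k D\simeq C\otimes_k H(D)\cong\bigoplus_j C\otimes_k H_j(D)$, and the homology of each summand is read off degree by degree with no diagram chase. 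Your route is more portable (it is the proof that yields the general K\"unneth exact sequence with a Tor correction term), but it carries the extra obligation you flag at the end — checking that the isomorphism produced by the long exact sequence agrees with the explicit map $\mu_n$ — which the splitting argument largely sidesteps. Either way the statement is proved; your fallback of citing \cite[Theorem 3.6.3]{W94} for the $k$-module statement and adding only the $G$-equivariance observation is also perfectly adequate and is essentially the economy the paper is aiming for.
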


	\begin{proof}
		Let $i, j, n$ be integers for which $i+j = n$. We denote the differentials of $C,D,$ and $C\otimes_k D$ by $c, d,$ and $e$ respectively. First note if $x \in \ker c_i$ and $y \in \ker d_j$, then $x\otimes y \in \ker e$. Moreover, if $x \in \im c_{i+1}$ or $y \in \im d_{j+1}$, then $x \otimes y \in \im e_{n + 1}$. Therefore, the map sending a pair \[(x + \im \delta_{i+1}, y + \im \epsilon_{j+1}) \mapsto x\otimes y\] induces a well-defined $kG$-module homomorphism \[H_i(C) \otimes_k H_j(D) \to H_n(C\otimes_k D).\] We show that the direct sum of these maps taken over all pairs $(i,j)$ satisfying $i+j = n$ yields an isomorphism. Note that the map as constructed is natural in $C$ and $D$.

        To show that this map is an isomorphism, it suffices to show that it is an isomorphism of $k$-modules. $C$ and $D$ are split as complexes of $k$-modules, therefore we have $D \simeq H(D)$, so $C \otimes_k D \simeq C \otimes_k H(D)$. Now, we have an isomorphism of chain complexes of $k$-vector spaces \[C \otimes_k H(D) \cong \bigoplus_{j \in \Z} C \otimes_k H_j(D),\] where $H_j(D)$ is regarded as a chain complex concentrated in degree $j$. It follows that \[H_n(C \otimes_k H_j(D)) \cong H_{n-j}(C) \otimes_k H_j(D),\] as $k$-vector spaces, and taking the direct sum over all $j \in \Z$ implies the result.
	\end{proof}

	\subsection{Endotrivial modules}

	Endotrivial complexes are a chain complex-theoretic analogue of endotrivial modules. We present this section both for later use and for additional motivation.

	\begin{definition}
		\begin{enumerate}
            \item If $G$ is a $p$-group, a $kG$-module $M$ is an \textit{endopermutation module} if $M^* \otimes_k M$ is a permutation $kG$-module. More generally, if $G$ is a finite group, a $kG$-module $M$ is an \textit{endo-$p$-permutation module} if $M^* \otimes_k M$ is a $p$-permutation $kG$-module (see \cite{Ur06}).
			\item A $kG$-module $M$ is an \textit{endotrivial module} if $M^* \otimes_k M \cong k \oplus P$, where $P$ is a projective $kG$-module. In other words, $M^* \otimes_k M \cong k$ in the stable module category ${}_{kG}\underline{\mathbf{mod}}$.
			\item We define the group of endotrivial $kG$-modules $\mathcal{T}(G)$ as follows. The elements of $\calT(G)$ are classes of endotrivial modules which are isomorphic in the stable module category ${}_{kG}\underline\catmod$. Here, $M_1$ and $M_2$ are isomorphic in ${}_{kG}\underline\catmod$ if and only if $M_1 \oplus P_1 \cong M_2 \oplus P_2$ for some projective $kG$-modules $P_1, P_2$. Write $[M] \in \calT(G)$ to denote the isomorphism class of $M$. Addition in $\calT(G)$ is induced by $\otimes_k$, that is, given two endotrivial modules $M_1, M_2$, \[[M_1] + [M_2] := [M_1 \otimes_k M_2].\]
		\end{enumerate}
	\end{definition}

    Standard notation for the group of endotrivial modules is $T(G)$, however since we also use $T$ to denote the trivial source ring, we switch to $\calT(G)$ to distinguish between the two groups.

	\begin{example}
		Most known examples of endotrivial modules come from syzygies. Proofs of the following two examples are fairly elementary.
		\begin{enumerate}
            \item Let $\Omega(M)$ be the kernel of the projective cover $P \to M$, and define $\Omega^i(M) = \Omega(\Omega^{i-1}(M))$ for $i \in \N$, with $\Omega^1(M) = \Omega(M)$. Similarly, let $\Omega\inv(M)$ be the cokernel of the injective hull $M \to I$, and define $\Omega^{-i}(M) := \Omega\inv(\Omega^{-(i-1)}(M))$ for $i \in \N$. If $M$ is endotrivial, $\Omega^i(M)$ is endotrivial for all $i \in \Z$. See \cite[Example 2.1]{Ma18} for more details.

			\item Let $X$ be a $G$-set. The \textit{relative syzygy} $\Delta(X)$ is the kernel of the augmentation homomorphism $kX \to k$. If $G$ is a $p$-group, $\Delta(X)$ is an endopermutation module, see \cite[Theorem 1]{Al01}. Additionally, $\Delta(X)$ is an endotrivial module in some rare cases. For example, when $G$ is a semidihedral 2-group and $X = G/H$, where $H$ is a noncentral subgroup of order 2, $\Delta(G/H)$ is endotrivial, and the class of $\Omega(\Delta(G/H))$ in $\calT(G)$ is torsion with order 2 (see \cite[Theorem 7.1]{CaTh00}).
		\end{enumerate}
	\end{example}

	\begin{remark}
		Puig showed in \cite[Corollary 2.4]{Pu90} that when $G$ is a $p$-group, $\calT(G)$ is finitely generated as an abelian group. This theorem was later extended to all finite groups, see \cite[Corollary 2.5]{CMN06} for instance. Therefore $\calT(G)$ has a decomposition into its torsion subgroup and a (not necessarily unique) free subgroup. One goal of modular representation theorists is to completely determine the structure of $\calT(G)$ for all finite groups, and to determine explicit constructions for generators, an analogy that we will adopt in the sequel. In general, this is an open problem, but is known for many cases, including all $p$-groups, which was completed by Carlson and Th\'evenaz over multiple papers, see \cite{CaTh00} and \cite{CT04}. In general, the difficulty in understanding the isomorphism class of $\calT(G)$ is determining its torsion subgroup, as the torsionfree rank can be deduced from the structure of $G$.

		In general, torsion in $\calT(G)$ is rare for $p$-groups; in fact, torsion occurs only for cyclic groups of order at least 3, quaternion groups, and semidihedral groups. The group of endotrivial modules for a finite $p$-group is generated by relative syzygies, except in the case $p = 2$ and $G$ is (generalized) quaternion. We refer the reader to \cite{Ma18} for more detailed exposition on endotrivial and endopermutation modules.
	\end{remark}

	\subsection{The Burnside ring}

	We next discuss the Burnside ring $B(G)$ of a finite group, which partially governs the decomposition of $O(T(kG))$. If $P$ is a $p$-group, since the indecomposable $p$-permutation $kP$-modules are $k$-linearized transitive permutation modules, the trivial source ring of $kP$ is isomorphic to the Burnside ring. We follow \cite{Bou10} for this subsection.

	\begin{definition}
		The \textit{Burnside ring} of a finite group $G$, denoted $B(G)$, is the Grothendieck group of the additive category of finite $G$-sets, ${}_G\textbf{set}$. If $X$ is a $G$-set, we write $[X]$ to denote the image of $X$ in $B(G)$. $B(G)$ is a ring with product induced by the direct product. We define the \textit{mark homomorphism} as follows, following:
		\begin{align*}
		\mathfrak{m}: B(G) &\to \left(\prod_{H\leq G} \Z \right)^G\\
		X &\mapsto (|X^H|)_{H\leq G}
		\end{align*}
		Here, the $G$-action is given by $G$-conjugation on the poset of subgroups of $G$. It follows that the image of $\mathfrak{m}$ is $G$-stable from the property that if $K, H \leq G$, then $G/K\cong G/H$ as $G$-sets if and only $H =_G K$.
	\end{definition}

	\begin{remark}\label{burnsideringmarkhominverse}
		Note that we have a ring homomorphism $B(G) \to T(kG)$ induced by $k$-linearization, $[X] - [Y] \mapsto [kX] - [kY]$. It is in general neither injective nor surjective, but if $G$ is a $p$-group it is an isomorphism.

		$\mathfrak{m}$ is a full-rank injective ring homomorphism, so after extending scalars to $\Q$, we obtain an isomorphism
		\[\Q\otimes \mathfrak{m}: \Q \otimes_\Z B(G) \xrightarrow[]{\sim} \left(\prod_{H\leq G} \Q\right)^G\] We set $\Q B(G) := \Q \otimes_\Z B(G)$ and $\Q \mathfrak{m} := \Q \otimes \mathfrak{m}$.

		We describe the inverse $\Q \mathfrak{m}\inv$. Define the primitive idempotent $\delta_{[H]} \in \left(\prod_{H\leq G}\Q \right)^G$ by $(\delta_{[H]})_K = 1$ if $H =_G K$ and $(\delta_{[H]})_K = 0$ otherwise. Then, $\delta_{[H]}$ has preimage
		\[e^G_H =\frac{1}{|N_G(H)|}\sum_{K\leq H} |K|\mu(K, H) [G/K] \in \Q B(G), \] where $\mu$ is the M\"{o}bius function associated to the poset of subgroups of $G$. $\Q \mathfrak{m}\inv: \left(\prod_{H\leq G}\Q\right)^G \to \Q B(G)$ is as follows:

		\begin{align*}
		\Q \mathfrak{m}\inv: \left(\prod_{H\leq G}\Q\right)^G &\to \Q B(G)\\
		(a_H)_{H\leq G} &\mapsto \sum_{H \in [s(G)]} a_H e^G_H
		\end{align*}

	\end{remark}

	\section{Endotrivial complexes and h-marks} \label{endotrivialintroduction}

	In this section, we introduce the notion of an endotrivial chain complex and define the group $\calE_k(G)$ of endotrivial $kG$-complexes. We will find via the Brauer construction that elements of this group can be described via integral constants, similar to how elements of the Burnside ring can be described via their marks.

	\begin{definition}
		Let $C \in Ch^b({}_{kG}\triv)$. Say $C$ is an \textit{endotrivial complex} if $\End_k(C) \cong C^* \otimes_k C \simeq k[0]$.
	\end{definition}

	Of course, we can define endotriviality for ${}_{kG}\mathbf{mod}$, but the scope of this paper is limited to bounded chain complexes of $p$-permutation $kG$-modules. ``Endotrivial complexes'' implies bounded endotrivial complexes of $p$-permutation $kG$-modules.s

	\begin{definition}
		\begin{enumerate}
			\item We define the group $\calE_k(G)$ to be the set of all homotopy classes of endotrivial chain complexes of $p$-permutation $kG$-modules. Given an endotrivial complex $C$, we write $[C]\in \calE_k(G)$ to denote the corresponding homotopy class in $\calE_k(G)$. $\calE_k(G)$ forms an abelian group with group addition induced by $\otimes_k$, that is, given two endotrivial complexes $C_1, C_2$, \[[C_1] + [C_2] := [C_1 \otimes_k C_2].\]

			\item We call the trivial module $k$, regarded as a chain complex concentrated in degree 0, the \textit{trivial endotrivial complex}. $[k]$ is the identity element of $\calE_k(G)$.
		\end{enumerate}
	\end{definition}

	We may abusively refer to elements of $\calE_k(G)$ as chain complexes, rather than homotopy classes, when it is permissible to do so. We write $C \in \calE_k(G)$ to denote that $C$ is an endotrivial complex of $kG$-modules. One has to take care when defining properties of elements in $\calE_k(G)$ via representatives of homotopy classes, to make sure the properties are homotopy invariant.

	First, note that the Brauer construction, restriction, and inflation all preserve endotriviality.

	\begin{prop}\label{brauerpreservesendotriviality}
        If $G$ and $H$ are finite groups and $\calF: Ch^b({}_{kG}\triv) \to Ch^b({}_{kH}\triv)$ is an additive functor such that for all $C_1,C_2 \in Ch^b({}_{kG}\triv)$, $\mathcal{F}(C_1)\otimes_k \mathcal{F}(C_2) \cong \mathcal{F}(C_1\otimes_k C_2)$, $\mathcal{F}(C_1^*) \cong\mathcal{F}(C_1)^*$, and $\mathcal{F}(k) \cong k$, then $\mathcal{F}$ induces a well-defined group homomorphism $\calE_k(G) \to \calE_k(H)$.

		In particular, if $C \in \calE_k(G)$,
        \begin{itemize}
            \item For all $p$-subgroups $P \in s_p(G)$, $C(P) \in \calE_k(N_G(P)/P)$, and $-(P)$ induces a well-defined group homomorphism $\calE_k(G) \to \calE_k(N_G(P)/P)$.
            \item For all subgroups $H \leq G$, $\Res^G_H C \in \calE_k(H)$, and $\Res^G_H$ induces a well-defined group homomorphism $\calE_k(G) \to \calE_k(H)$.
            \item If $G$ is a quotient of $\tilde{G}$, $\Inf^{\tilde{G}}_G C \in \calE_k(\tilde{G})$, and $\Inf^{\tilde{G}}_G$ induces a well-defined group homomorphism $\calE_k(G) \to \calE_k(\tilde{G}). $
        \end{itemize}

	\end{prop}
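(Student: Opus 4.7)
The plan is to verify three things: first, that $\calF$ is well-defined on homotopy classes; second, that $\calF$ sends endotrivial complexes to endotrivial complexes; third, that the induced map respects the group structure. The key observation underlying well-definedness is that an additive functor between categories of chain complexes automatically preserves chain homotopies, since if $f - g = dh + hd$ is a null-homotopy relation, then applying $\calF$ termwise yields $\calF(f) - \calF(g) = d\calF(h) + \calF(h)d$. Thus $\calF$ descends to a functor $K^b({}_{kG}\triv) \to K^b({}_{kH}\triv)$, and in particular respects homotopy equivalences.

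For preservation of endotriviality, if $C^* \otimes_k C \simeq k[0]$ then applying $\calF$ and chaining together the three hypotheses yields
\[\calF(C)^* \otimes_k \calF(C) \cong \calF(C^* \otimes_k C) \simeq \calF(k[0]) \cong k[0],\]
so $\calF(C)$ is endotrivial. The group homomorphism property is then immediate: the hypothesis $\calF(C_1 \otimes_k C_2) \cong \calF(C_1) \otimes_k \calF(C_2)$ respects the group operation, and $\calF(k) \cong k$ sends the identity to the identity.

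It then remains to verify that the three listed examples $-(P)$, $\Res^G_H$, and $\Inf^{\tilde G}_G$ each satisfy the hypotheses. For the Brauer construction, additivity is immediate, tensor compatibility is Proposition \ref{brauerppermproperties}(a), dual compatibility is Proposition \ref{brauercommuteswithdualsandconj}(a), and $k(P) \cong k$ is immediate from the definition (fixed points are all of $k$, and trace from proper subgroups is zero on the trivial module). For $\Res^G_H$, the three compatibilities are essentially tautological, since the tensor product, the $k$-dual, and the trivial module are all defined intrinsically over $k$ and only the group action is forgotten. For $\Inf^{\tilde G}_G$, the same verification applies: inflation pulls back the $G$-action along the quotient map $\tilde G \twoheadrightarrow G$, and this pullback commutes with $\otimes_k$, with $(-)^*$, and sends the trivial $kG$-module to the trivial $k\tilde G$-module.

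There is no real obstacle, as the result is formal once one recognizes that additive functors preserve chain homotopies. The one point worth flagging during writeup is confirming that the natural isomorphisms in the hypotheses (and in the cited propositions about the Brauer construction) are compatible with differentials and the Koszul sign conventions when extended to complexes, but this is standard and does not involve new ideas.
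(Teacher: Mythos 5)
Your proposal is correct and follows essentially the same route as the paper: chain the three hypotheses to get $\calF(C)^*\otimes_k\calF(C)\simeq k[0]$, note that $\calF$ preserves homotopy equivalences so the map on homotopy classes is well defined, and observe that the three listed functors satisfy the hypotheses. The only addition worth noting is that your justification that $\calF$ preserves homotopies (applying $\calF$ termwise to a null-homotopy) implicitly uses that $\calF$ is defined degreewise from a functor on modules, which holds for all three examples and is the intended reading.
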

	\begin{proof}
        Since $C \otimes_k C^* \simeq k$, we have the following sequence of homotopy equivalences of $kH$-complexes: \[k = \calF(k) \simeq \calF(C\otimes_k C^*) \cong \calF(C)\otimes_k \calF(C^*) \cong \calF(C) \otimes_k \calF(C)^*.\] Thus $\calF(C)$ is endotrivial. $\calF$ preserves homotopy equivalences, so the map $\calF: \calE_k(G) \to \calE_k(H)$ is well-defined, and it is a group homomorphism since $\calF$ commutes with tensor products by assumption. This holds for the Brauer construction, restriction and inflation, as they all satisfy the assumed properties.
	\end{proof}

	The following theorem is an equivalent formulation of endotriviality for $p$-permutation complexes. This was communicated to the author by Robert Boltje.

	\begin{theorem}\label{endotrivdef2}
		Let $C \in Ch^b({}_{kG}\textbf{triv})$. Then $C$ is endotrivial if and only if for all $p$-subgroups $P \in s_p(G)$, $C(P)$ has nonzero homology concentrated exactly in one degree, with the nontrivial homology having $k$-dimension one.
	\end{theorem}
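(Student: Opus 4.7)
The plan is to handle the two directions separately, leveraging the Brauer construction, the K\"unneth formula (Theorem~\ref{kunneth}), and Theorem~\ref{permsplit} (the splitting criterion for morphisms of $p$-permutation modules). For the forward direction, I would begin with $C^* \otimes_k C \simeq k[0]$ and apply the Brauer construction $(-)(P)$, which is additive (so preserves homotopy equivalences) and commutes with duals and tensor products of $p$-permutation complexes by Propositions~\ref{brauercommuteswithdualsandconj} and \ref{brauerppermproperties}. Since $k(P) = k$ for every $p$-subgroup $P$, this yields $C(P)^* \otimes_k C(P) \simeq k[0]$ over $k[N_G(P)/P]$. Setting $d_j := \dim_k H_j(C(P))$ and applying K\"unneth together with the field-level identity $\dim_k H_i(C(P)^*) = \dim_k H_{-i}(C(P))$, I obtain $\sum_{j} d_j \, d_{j-n} = \delta_{n,0}$ for every $n \in \Z$. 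The case $n=0$ gives $\sum_j d_j^2 = 1$ with $d_j \in \Z_{\geq 0}$, forcing exactly one $d_{j_0}$ to equal $1$ and the rest to vanish.

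For the reverse direction, I would work with the coevaluation chain map $\eta : k[0] \to C^* \otimes_k C$ sending $1$ to $\id_C$, a $0$-cycle in the internal hom complex $\End_k(C) \cong C^* \otimes_k C$, and aim to show $\eta$ is a homotopy equivalence. The same K\"unneth computation shows $(C^* \otimes_k C)(P)$ has homology concentrated in degree $0$ of $k$-dimension one for every $P$; moreover, if $H_{h_P}(C(P)) \cong k_{\chi_P}$ for some character $\chi_P$ of $N_G(P)/P$, then $H_0\bigl((C^* \otimes_k C)(P)\bigr) \cong k_{\chi_P^{-1}} \otimes_k k_{\chi_P}$ is the trivial module. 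Since $C(P)$ has nonzero homology, $\id_{C(P)}$ induces the identity (hence a nonzero map) on homology, so $\eta(P)$ represents a nonzero class in the one-dimensional $H_0$ and is therefore a quasi-isomorphism. Additivity of the Brauer construction and of mapping cones gives $\operatorname{cone}(\eta)(P) \cong \operatorname{cone}(\eta(P))$, which is then acyclic for every $P \in s_p(G)$.

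The hard part is upgrading this local acyclicity to contractibility, and I would isolate it as an auxiliary claim: if $E \in Ch^b({}_{kG}\triv)$ has $E(P)$ acyclic for every $P \in s_p(G)$, then $E$ is contractible. The proof proceeds by induction on the length of $E$. Let $n$ be the top nonzero degree of $E$; the differential $d_n(P) : E_n(P) \to E_{n-1}(P)$ is injective for every $P$, either trivially (when $E_n(P) = 0$) or because its kernel equals the vanishing top homology $H_n(E(P))$. Theorem~\ref{permsplit} then upgrades this pointwise injectivity to split injectivity of $d_n$ in ${}_{kG}\triv$, yielding a decomposition $E \cong \bigl(E_n \xrightarrow{\sim} \im d_n\bigr) \oplus E'$ in which the first summand is contractible and $E'$ has strictly smaller length; additivity of the Brauer construction and of homology shows $E'(P)$ remains acyclic, so the induction closes. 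Applying the claim to $E = \operatorname{cone}(\eta)$ shows $\eta$ is a homotopy equivalence, so $C^* \otimes_k C \simeq k[0]$ and $C$ is endotrivial.
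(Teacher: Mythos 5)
Your proof is correct, and while the forward direction coincides with the paper's (the paper compresses the K\"unneth dimension count $\sum_j d_j d_{j-n} = \delta_{n,0}$ into ``an easy argument''), your reverse direction takes a genuinely different route. The paper works directly with $D = C \otimes_k C^*$: it uses K\"unneth to see that $H_0(D)\cong k$ and that every $D(P)$ has homology concentrated in degree $0$, then strips contractible two-term summands off both ends of $D$ by applying Theorem~\ref{permsplit} to the extremal differentials and inducting on length, never naming a specific map $k[0]\to D$. You instead exhibit the candidate equivalence explicitly as the coevaluation $\eta\colon k[0]\to C^*\otimes_k C$, show $\eta(P)$ is a quasi-isomorphism for every $P$ (using that $\id_{C(P)}$ is not null-homotopic because $C(P)$ has nonzero homology, and that $H_0$ of the target is one-dimensional), and then invoke an auxiliary lemma that a bounded complex of $p$-permutation modules with all Brauer quotients acyclic is contractible --- a lemma you prove by exactly the same stripping induction the paper uses, so both arguments ultimately rest on Theorem~\ref{permsplit} plus K\"unneth. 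Your route buys a conceptual identification of \emph{which} map realizes the homotopy equivalence and isolates a reusable lemma (a degenerate case of the paper's later Theorem~\ref{boucthmgeneralization}); the paper's is shorter and avoids any bookkeeping about units and counits. The one step you should make explicit is that the natural isomorphism $(C^*\otimes_k C)(P)\cong C(P)^*\otimes_k C(P)$ carries the image of $\id_C$ under the Brauer quotient map to $\id_{C(P)}$; this is true (check it on permutation modules, where $\id_{kX}=\sum_{x\in X}x^*\otimes x$ maps to $\sum_{x\in X^P}x^*\otimes x$, and extend by naturality and additivity), but without it the claim that $\eta(P)$ hits a nonzero class in $H_0$ does not follow.
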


	\begin{proof}
		First, suppose $C$ is endotrivial, then $C(P)$ is endotrivial as well. Since $H_i(C)^* \cong H_{-i}(C^*)$ for all $i \in \Z$, an easy argument using the K{\"u}nneth formula shows that $C(P)$ has nonzero homology concentrated in one degree, with the nonzero homology having $k$-dimension one.

		Conversely, suppose for all $p$-subgroups $P \in s_p(G)$, $C(P)$ has homology concentrated in one degree, with the homology having $k$-dimension one. We show $C\otimes_k C^* \simeq k$. By the K{\"u}nneth formula, we have that $C \otimes_k C^* \cong D$, where $D$ is a chain complex satisfying $ H_0(D) \cong k$. Label the differentials of $D$ by $d_n: D_n \to D_{n-1}$. If $C$ has length $n$, then $D$ has length $2n - 1$, with $n-1$ and $-(n-1)$ the greatest and least indices respectively for which $D_i \neq 0$. Moreover, $D(P)$ has nonzero homology concentrated in degree zero for any $p$-subgroup $P\in s_p(G)$. Indeed, if $i \in \Z$ is the unique integer for which $H_i(C(P))\neq 0$, then since $H_i(C(P))^* \cong H_{-i}(C(P)^*) \neq 0$
        \begin{align*}
            H_0((C\otimes_k C^*)(P)) &\cong H_0(C(P) \otimes_k C^*(P))\\
            &\cong H_0(C(P) \otimes_k C(P)^*) \\
            &\cong H_i(C(P)) \otimes_k H_{-i}(C(P)^*) \\
            &\cong H_i(C(P)) \otimes_k H_i(C(P))^* \cong  k
        \end{align*}

        It follows similarly by the K\"unneth theorem that $H_i((C\otimes_k C^*)(P)) = 0$ for $i \neq 0.$

		$d_{n-1}(P)$ and $d_{-n + 2}(P)$ are injective and surjective respectively over all $p$-subgroups $P\in s_p(G)$, and therefore by Theorem \ref{permsplit} are split injective and surjective respectively. Thus, we have an isomorphism \[D \cong D' \oplus (D_{n-1} \xrightarrow{\cong} D_{n-1}) \oplus (D_{-n + 2} \xrightarrow{\cong} D_{-n + 2}),\] where $D'$ is a chain complex of length $2n - 3$ for which $D'(P)$ has nonzero homology only in degree zero for all $P \in s_p(G)$. Here, $D_{n-1} \xrightarrow{\cong} D_{n-1}$ is concentrated in degrees $n-1$ and $n-2$, and $D_{-n + 2} \xrightarrow{\cong} D_{-n + 2}$ is concentrated in degrees $-n+2$ and $-n+1$. These two complexes are contractible, so $D \simeq D'$. An induction argument on the length of $C$ yields the homotopy equivalence $D \simeq k$, as desired.
	\end{proof}

	If $C,D \in Ch^b({}_{kG}\textbf{triv})$ satisfy $C \simeq D$, then $C(P) \simeq D(P)$ for all $p$-subgroups $P \leq G$. Moreover, since homology is preserved under homotopy equivalence, we may speak of ``the homology'' of a homotopy equivalence class of chain complexes.

	\begin{definition} \label{hmarkhom}
		For a class of endotrivial complexes $[C]$ and $P \in s_p(G)$, denote by $h_C(P)$ the degree $i\in \Z$ of a representative $C$ in which $H_i(C(P)) \neq 0$, and set $\calH_C(P):= H_{h_C(P)}(C(P))$. This is well-defined since the Brauer construction preserves homotopy equivalence. We will call $h_C(P)$ the \textit{h-mark of $C$ at $P$}, and refer to $\calH_C(P)$ as \textit{the homology of $C$ at $P$.} We call the function $h_C$ \textit{the h-marks of $C$.}

        Since the $k$-dimension of $\calH_C(P)$ is one, we may also consider $\calH_C(P)$ a group homomorphism $N_G(P)/P \to k^\times$, by identifying a $k$-dimension one $k[N_G(P)/P]$-module $k_\omega$ with its corresponding group homomorphism $\omega: N_G(P)/P\to k^\times$. We have a map:

		\begin{align*}
		\Xi: \calE_k(G) &\to \prod_{P \in s_p(G)} \Z \times \Hom(N_G(P)/P, k^\times)\\
		[C] &\mapsto \big(h_C(P), \calH_C(P)\big)_{P \in s_p(G)}
		\end{align*}
		It is straightforward to verify that this map is a well-defined group homomorphism via the K{\"u}nneth formula and commutativity of the Brauer construction with tensor products for $p$-permutation modules.

        For any endotrivial complex $C$, $h_C$ is a class function by Proposition \ref{brauercommuteswithdualsandconj} (c). The assignment $[C] \mapsto h_C$ is well-defined group homomorphism:

		\begin{align*}
		h: \calE_k(G) &\to \left(\prod_{P \in s_p(G)} \Z \right)^G\\
		[C] &\mapsto \big(h_C(P)\big)_{P \in s_p(G)}
		\end{align*}

        Here, the $G$-action is induced by conjugation on $s_p(G)$. Note the connection to the mark homomorphism on the Burnside ring, which is also $G$-stable under the same action. Equivalently, we may view $h$ as a group homomorphism,
        \begin{align*}
            h: \calE_k(G) &\to C(G,p)\\
            [C] &\mapsto h_C
        \end{align*}
        where $C(G,p)$ denotes the additive group of $\Z$-valued class functions on $p$-subgroups of $G$.
	\end{definition}

	\begin{theorem}\label{kernelthm}
		$\ker h = \{k_{\omega}[0] \mid \omega \in \Hom(G, k^\times)\}$. In other words, up to homotopy, the only endotrivial complexes $C$ such that $C(P)$ has homology in degree 0 for all $p$-subgroups $P \in s_p(G)$ are the $k$-dimension one representations of $kG$, regarded as chain complexes concentrated in degree 0.
	\end{theorem}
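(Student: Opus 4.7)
The plan is to prove the two containments separately. For $\supseteq$, fix $\omega \in \Hom(G, k^\times)$ and compute the h-marks of $k_\omega[0]$ directly. Because $k$ has characteristic $p$ and $k^\times$ contains no nontrivial elements of $p$-power order, the restriction $\omega|_P$ is trivial for every $p$-subgroup $P \leq G$. Hence $(k_\omega)^P = k_\omega$, and for each proper $Q < P$ the relative trace $\tr^P_Q$ acts as multiplication by $[P:Q] \in p\Z$, which vanishes in $k$. Thus $k_\omega(P) \cong k_{\omega|_{N_G(P)/P}}$ lies in degree $0$, so $h_{k_\omega[0]}(P) = 0$ for every $P \in s_p(G)$.

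For the reverse containment, I would induct on the length of a representative $C \in \ker h$. For the base case of length $1$, write $C = M[i]$ for some $p$-permutation module $M$. Applying the Brauer construction at $P = 1$ gives $C(1) = M[i]$ with $M(1) = M$, so the hypothesis $h_C(1) = 0$ forces $i = 0$, and Theorem \ref{endotrivdef2} forces $\dim_k M = \dim_k M(1) = 1$. Hence $M \cong k_\omega$ for a unique $\omega \in \Hom(G, k^\times)$.

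For the inductive step, assume $C$ has length at least $2$, and let $n$ and $m$ be the largest and smallest indices with $C_n, C_m \neq 0$; since $n > m$, either $n > 0$ or $m < 0$. Suppose first $n > 0$. For any $P \in s_p(G)$, $C_{n+1}(P) = 0$ forces $H_n(C(P)) = \ker d_n(P)$, and this kernel must vanish because $h_C(P) = 0$ concentrates the homology of $C(P)$ in degree $0 \neq n$. Thus $d_n(P)$ is injective for every $P \in s_p(G)$, so Theorem \ref{permsplit} makes $d_n \colon C_n \to C_{n-1}$ split injective in ${}_{kG}\triv$. Choosing a retraction decomposes $C$ as the direct sum of a contractible complex $(C_n \xrightarrow{\id} C_n)$ in degrees $n, n-1$ and a complex $C'$ of strictly smaller length, so $C \simeq C'$. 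The case $m < 0$ is dual: $H_m(C(P)) = C_m(P)/\im d_{m+1}(P) = 0$ for every $P$, so the surjectivity half of Theorem \ref{permsplit} makes $d_{m+1}$ split surjective and one again peels off a contractible summand. In either case $C'$ remains in $\ker h$ and endotrivial by homotopy invariance, so induction yields $C' \simeq k_\omega[0]$, hence $C \simeq k_\omega[0]$.

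I expect no substantive obstacle in this argument. The one point to verify is that the splittings produced by Theorem \ref{permsplit} give decompositions inside $Ch^b({}_{kG}\triv)$, which is automatic because ${}_{kG}\triv$ is closed under direct summands in ${}_{kG}\catmod$. The essential mechanism is simply the direct translation of the h-mark hypothesis, namely the vanishing of the extremal homology of each Brauer quotient, into split injectivity or surjectivity of the outermost differentials via Theorem \ref{permsplit}.
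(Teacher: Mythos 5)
Your proof is correct and follows essentially the same route as the paper's: the vanishing of the top (or bottom) homology of every $C(P)$ makes the outermost differential split injective (resp.\ surjective) by Theorem \ref{permsplit}, so a contractible summand peels off and one inducts on length. The only differences are cosmetic — you phrase it as induction rather than a minimal counterexample, and you spell out the easy containment $\supseteq$ and the length-one base case, which the paper leaves implicit.
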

	\begin{proof}
		Suppose for contradiction there is another class of endotrivial complexes $[C]$ for which $h_C$ is the zero function. Choose $C$ to be a representative such that $C$ is minimal with respect to length. $C$ must have at least length two by assumption. Let $i \in \Z$ be the greatest integer such that $C_i \neq 0$ and let $j \in \Z$ be the least integer such that $C_j \neq 0$ (so $C$ has length $i - j + 1$). Since $h_C = 0$, $d_0$ is a nonzero homomorphism, so $C_0 \neq 0$. Since the length of $C$ is at least 2, either $i > 0$ or $j < 0$. We assume  $i > 0$, the other case follows dually.

		Since $H_i(C) = 0$ (as $h_C = 0$), $d_i: C_i \to C_{i-1}$ is injective. Now, for all $p$-subgroups $P\in s_p(G)$, the largest integer $i'$ for which $C_{i'}(P) \neq 0$ satisfies $i' \leq i$. Moreover since $h_C(P) = 0$, $H_i(C(P)) = 0$ as well, so $d_i(P)$ is injective as well (possibly the zero map). Since $P$ is an arbitrary $p$-subgroup of $G$, Theorem \ref{permsplit} implies $d_i$ is split injective. It follows that $C$ is homotopy equivalent to a chain complex $C'$ of length $i+j-2$. This contradicts minimality of the length of $C$, so we are done.
	\end{proof}

	\begin{corollary}
		$\Xi$ is injective. In particular, given integers $(x_P)_{P\in s_p(G)}$ and a linear character $\rho \in \Hom(G,k^\times)$, there is at most 1 element $(a_P, \rho_P)_{P\in s_p(G)} \in \im \Xi$ for which  $\rho_1 = \rho$ and $a_P = x_P$ for all $P\in s_p(G)$.

		In particular, $\calE_k(G)$ is finitely generated, with $\Z$-rank bounded by the number of conjugacy classes of $p$-subgroups of $G$ and torsion subgroup isomorphic to $\Hom(G,k^\times)$.
	\end{corollary}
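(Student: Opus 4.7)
The plan is to get injectivity of $\Xi$ as a direct consequence of Theorem \ref{kernelthm}, and then read off the structural consequences from the short exact sequence induced by $h$. The whole argument amounts to repackaging Theorem \ref{kernelthm} plus one computation at the trivial subgroup.

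For injectivity, I would argue as follows. Suppose $[C], [D] \in \calE_k(G)$ have $\Xi([C]) = \Xi([D])$. Since $\Xi$ is a group homomorphism (by the K\"unneth formula and compatibility of the Brauer construction with tensor products, as already noted in Definition \ref{hmarkhom}), we get $\Xi([C \otimes_k D^*]) = 0$. In particular the h-mark function of $C \otimes_k D^*$ is identically zero, so Theorem \ref{kernelthm} forces $C \otimes_k D^* \simeq k_\omega[0]$ for some $\omega \in \Hom(G,k^\times)$. To show $\omega$ is trivial, I would evaluate the homology at $P = 1$: since the Brauer construction at the trivial subgroup is the identity, $k_\omega \cong \calH_{C\otimes_k D^*}(1) \cong \calH_C(1) \otimes_k \calH_D(1)^*$, and by hypothesis $\calH_C(1) = \calH_D(1)$, so this tensor product is the trivial character. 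Hence $[C] = [D]$ in $\calE_k(G)$.

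For the structural statement, I would use the short exact sequence
\[0 \to \Hom(G, k^\times) \to \calE_k(G) \xrightarrow{h} \im h \to 0,\]
where the kernel identification is exactly Theorem \ref{kernelthm}. The target $(\prod_{P \in s_p(G)} \Z)^G$ is free abelian of rank equal to $|[s_p(G)]|$, since a $G$-fixed tuple is determined freely by its values on conjugacy class representatives. Consequently $\im h$ is free abelian of rank at most $|[s_p(G)]|$, the sequence splits, and $\calE_k(G) \cong \Hom(G,k^\times) \oplus \im h$. The torsion subgroup is then $\Hom(G,k^\times)$, which is finite (a subgroup of the group of $|G|$-th roots of unity in $k^\times$), and the free rank is bounded by $|[s_p(G)]|$, giving finite generation.

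There is no real obstacle beyond being careful at the one place where Theorem \ref{kernelthm} leaves ambiguity, namely the twisting character $\omega$; the evaluation at $P = 1$ pins it down immediately. Everything else is bookkeeping on a split short exact sequence.
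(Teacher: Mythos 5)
Your argument is correct and is exactly the route the paper intends (the paper simply says the corollary "follows immediately" from Theorem \ref{kernelthm}, and later records the same split exact sequence with retraction $[C] \mapsto \calH_C(1)$). Your only added content --- pinning down the twisting character $\omega$ by evaluating homology at the trivial subgroup via the K\"unneth formula --- is the right way to fill in the one detail the paper leaves implicit.
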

	\begin{proof}
	    This follows immediately from the previous theorem.
	\end{proof}

	\begin{remark}
		We will see in the sequel that $h$ is rarely a full-rank homomorphism.

		We can check if two endotrivial complexes are homotopy equivalent up to a twist by a $k$-dimension one representation by checking the degrees of their homology at every pair of complexes induced by the Brauer construction. In particular, if $G$ is a $p$-group, then $h$ is injective since the only $k$-dimension one $kG$-module is $k$ itself.

		In fact, $h$ yields a split exact sequence:
		\[0 \to \Hom(G, k^\times) \to \calE_k(G) \xrightarrow{h} \operatorname{im}(h) \to 0\] with a retraction of the inclusion given by
		\begin{align*}
		r: \calE_k(G) & \to \Hom(G, k^\times)\\
		[C] &\mapsto \calH_C(1)
		\end{align*}
		Therefore, $\calE_k(G) \cong  \Hom(G, k^\times) \times \operatorname{im}(h)$. With this characterization, we view h-marks as analogues of the usual marks associated to elements of the Burnside ring.
	\end{remark}

    \begin{remark}
        Endotrivial complexes, after a possible shift in degree, are examples of \textit{endosplit $p$-permutation resolutions,} an object first defined by Rickard in \cite[Section 7]{R96} (note in \cite{R96}, they are called endosplit permutation resolutions). A chain complex $C \in Ch^b({}_{kG}\mathbf{mod})$ is an endosplit $p$-permutation resolution if $C$ has homology concentrated in degree 0, and $C^* \otimes_k C$ is split. It is easy to see that a chain complex is endotrivial if and only if it is (up to a shift) if it is an endosplit $p$-permutation resolution for a $kG$-module with $k$-dimension one. This observation was first communicated to the author by Markus Linckelmann.

        As a result, we obtain a lifting theorem to $\calO$.
    \end{remark}
    \begin{theorem}{\cite[Theorem 7.1]{R96}}
        Let $G$ be a finite group and let $M$ be a $kG$-module that has an endosplit $p$-permutation resolution $X_M$. Then $M$ can be lifted to a $\calO G$-module that has an endosplit $p$-permutation resolution over $\calO$, and this lift is unique up to isomorphism.
    \end{theorem}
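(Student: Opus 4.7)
The plan is to lift the resolution $X_M$ itself to a bounded complex $\widehat{X}$ of $p$-permutation $\calO G$-modules by $\mathfrak{m}$-adic obstruction theory (where $\mathfrak{m}$ denotes the maximal ideal of $\calO$), and then define $\widehat{M} := H_0(\widehat{X})$. First I would use the unique lifting of $p$-permutation modules recalled earlier to choose $\widehat{X}_i$ with $k \otimes_\calO \widehat{X}_i \cong X_i$, and use the surjectivity of the reduction map $\Hom_{\calO G}(\widehat{X}_i, \widehat{X}_{i-1}) \to \Hom_{kG}(X_i, X_{i-1})$ to pick arbitrary lifts $\hat{d}_i$ of the differentials. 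This produces a graded object $(\widehat{X}, \hat{d})$ satisfying only $\hat{d}^2 \in \mathfrak{m}\cdot E^2$, where $E^n := \prod_i \Hom_{\calO G}(\widehat{X}_i, \widehat{X}_{i+n})$.

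The crux is correcting these lifts iteratively. Assuming inductively that $\hat{d}^2 \in \mathfrak{m}^n E^2$, the graded Jacobi identity $[\hat{d}, \hat{d}^2] = 0$ shows that the mod-$\mathfrak{m}$ reduction $\overline{r}$ of $\hat{d}^2/\mathfrak{m}^n$ is a degree-$2$ cocycle in the differential graded $k$-algebra $(E \otimes_\calO k, [d,-]) \cong \End_k(X_M)$. The endosplit hypothesis together with Theorem~\ref{kunneth} force $\End_k(X_M) \simeq M^* \otimes_k M$ with homology concentrated in degree $0$, so every positive-degree cocycle is a coboundary; thus $\overline{r} = [d, \overline{\epsilon}]$, and replacing $\hat{d}$ by $\hat{d} + \epsilon$ for any lift $\epsilon \in \mathfrak{m}^n E^1$ of $\overline{\epsilon}$ improves the defect to $\mathfrak{m}^{n+1}$. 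Completeness of $\calO$ assembles these corrections into a genuine differential. To verify the remaining properties of $\widehat{M}$, the short exact sequence of complexes $0 \to \widehat{X} \xrightarrow{\pi} \widehat{X} \to X_M \to 0$ (for a uniformizer $\pi \in \mathfrak{m}$) yields a long exact sequence in homology which, combined with Nakayama, forces $H_n(\widehat{X}) = 0$ for $n \neq 0$ and $\widehat{M}$ to be $\calO$-free with $k \otimes_\calO \widehat{M} \cong M$; splitness of $\widehat{X}^* \otimes_\calO \widehat{X}$ then lifts from the splitness of its reduction by a level-by-level application of Theorem~\ref{permsplit}.

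For uniqueness, if $\widehat{M}, \widehat{M}'$ are two lifts of $M$ with endosplit resolutions $\widehat{X}, \widehat{X}'$ over $\calO$, their mod-$\mathfrak{m}$ reductions are both endosplit $p$-permutation resolutions of $M$, which a Schanuel-type argument shows must agree up to contractible summands; after normalizing I may identify the components of $\widehat{X}$ and $\widehat{X}'$, and construct an $\calO G$-chain map $f : \widehat{X} \to \widehat{X}'$ lifting the identity by the same deformation argument, the successive obstructions now living in the positive-degree cohomology of $\End_k(X_M)$ and hence killed by endosplitness. Taking $H_0(f)$ gives the required isomorphism $\widehat{M} \cong \widehat{M}'$. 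I expect the principal obstacle to be the careful tracking of obstruction classes in the non-strict setting where $\hat{d}^2 \neq 0$ at intermediate stages, and verifying that each successive obstruction genuinely lies in the positive cohomological degree where endosplitness can annihilate it.
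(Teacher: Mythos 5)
First, a remark on the comparison itself: the paper does \emph{not} prove this statement — it is quoted verbatim from Rickard \cite[Theorem 7.1]{R96} — so your proposal can only be judged on its own terms. The existence half is a reasonable sketch of the standard obstruction-theoretic lifting argument and is close in spirit to how such results are actually proved: the successive obstructions to correcting $\hat d^{\,2}$ modulo higher powers of $\mathfrak{m}$ live in the degree $-2$ homology of the Hom-complex $\Hom_{kG}(X_M,X_M) = (X_M^*\otimes_k X_M)^G$ (note: the $G$-invariant complex, not $\End_k(X_M)$ itself, and note your $E^2$ should be $E^{-2}$ with the stated grading since $d$ lowers degree). Endosplitness makes $X_M^*\otimes_k X_M$ a \emph{split} complex whose homology is $M^*\otimes_k M$ concentrated in degree $0$ by the K\"unneth formula, and splitness is preserved by $(-)^G$, so the obstruction groups vanish. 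It is precisely here that splitness, rather than mere acyclicity in nonzero degrees, is indispensable, since homology does not commute with $G$-fixed points in general; you gesture at this but should make it explicit.

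The uniqueness half, however, contains a genuine gap. You claim that two endosplit $p$-permutation resolutions of the same module "must agree up to contractible summands" by a Schanuel-type argument, and then identify the components of $\widehat{X}$ and $\widehat{X}'$ in order to lift a chain map. This is false. Take $G = C_p$ with $p$ odd: both $k[0]$ and the complex $D = (k \to kC_p \to kC_p)$ concentrated in degrees $2,1,0$ (the two-fold shift of the dual of the truncated period-two resolution $\Gamma$ of Section 5) are endosplit $p$-permutation resolutions of the trivial module — $D$ is endotrivial with $h_D(1) = 0$, so its homology is $k$ concentrated in degree $0$ and $D^*\otimes_k D \simeq k[0]$ is split — yet $h_D(C_p) = 2 \neq 0 = h_{k[0]}(C_p)$, so $D \not\simeq k[0]$ and no identification of components exists. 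Consequently two lattices $\widehat{M}, \widehat{M}'$ lifting $M$ may come equipped with resolutions whose reductions are genuinely inequivalent complexes, and your strategy of lifting the identity chain map cannot get started. Proving that the lifted \emph{module} (as opposed to the resolution) is independent of the chosen resolution is exactly the delicate point of Rickard's theorem, and your proposal does not establish it.
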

    \begin{corollary}
        Let $C$ be an endotrivial $kG$-complex. There is a unique (up to isomorphism) complex $\widehat{C}$ of $p$-permutation $\calO G$-modules satisfying $\widehat{C}^* \otimes_\calO \widehat{C} \simeq \bigO$ and $k \otimes_\calO \widehat{C} \cong C$.
    \end{corollary}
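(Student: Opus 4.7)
The plan is to reduce the corollary directly to the cited theorem of Rickard by shifting the endotrivial complex so that its homology lies in degree $0$. First, applying Theorem \ref{endotrivdef2} with $P = 1$ shows that $C$ has homology concentrated in a single degree $n := h_C(1)$, and that the nonzero homology $\calH_C(1) \cong k_\rho$ is a one-dimensional $kG$-module for some linear character $\rho \in \Hom(G, k^\times)$. Let $C'$ denote the shift of $C$ whose homology is in degree $0$ (so $H_0(C') \cong k_\rho$ and $H_i(C') = 0$ for $i \neq 0$). Then $C'$ is a bounded complex of $p$-permutation $kG$-modules, and $C'^* \otimes_k C' \simeq C^* \otimes_k C \simeq k[0]$ is split, so $C'$ is by definition an endosplit $p$-permutation resolution of $k_\rho$.

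Next, I would invoke Rickard's theorem to obtain a unique-up-to-isomorphism $\calO G$-module $\widehat{k_\rho}$ lifting $k_\rho$ together with an endosplit $p$-permutation resolution $\widehat{C}'$ over $\calO$ satisfying $k \otimes_\calO \widehat{C}' \cong C'$. Since $\rho$ takes values in $k^\times$, the lift $\widehat{k_\rho}$ is necessarily a rank-one free $\calO$-module on which $G$ acts via the Teichmüller lift $\widehat{\rho}: G \to \calO^\times$, so in particular $\widehat{k_\rho}^* \otimes_\calO \widehat{k_\rho} \cong \calO$ as $\calO G$-modules. Set $\widehat{C}$ to be the appropriate shift of $\widehat{C}'$ so that $k \otimes_\calO \widehat{C} \cong C$, which takes care of the second required property.

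It then remains to verify $\widehat{C}^* \otimes_\calO \widehat{C} \simeq \calO$ and uniqueness. The shifts by $n$ and $-n$ cancel in the dual tensor product, so $\widehat{C}^* \otimes_\calO \widehat{C} \simeq \widehat{C}'^* \otimes_\calO \widehat{C}'$. Since $\widehat{C}'$ is endosplit, $\widehat{C}'^* \otimes_\calO \widehat{C}'$ is a split complex, hence homotopy equivalent to its homology. An $\calO$-version of the Künneth formula applies (the terms are $\calO$-free, as $p$-permutation $\calO G$-modules are), and since $\widehat{C}'$ and $\widehat{C}'^*$ each have homology concentrated in degree $0$ equal to $\widehat{k_\rho}$ and $\widehat{k_\rho}^*$ respectively, the total complex has homology $\widehat{k_\rho}^* \otimes_\calO \widehat{k_\rho} \cong \calO$ in degree $0$ and nothing elsewhere. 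Uniqueness follows because any other lift $\widehat{D}$ would, after the same shift, be an endosplit $p$-permutation resolution of some lift of $k_\rho$, and Rickard's uniqueness forces isomorphism (in $K^b({}_{\calO G}\triv)$) with $\widehat{C}'$. The only genuine subtlety — and the place where I would have to be most careful — is ensuring that Rickard's uniqueness applies to the lifted complex itself and not merely to the lifted module; this is part of the content of Theorem 7.1 of \cite{R96}, whose proof proceeds by inductively lifting the differentials using the fact that $p$-permutation $\calO G$-modules and their homomorphisms lift essentially uniquely from $k$.
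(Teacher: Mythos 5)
Your proposal is correct and follows essentially the same route as the paper, which simply observes that the shift $C[-h_C(1)]$ is an endosplit $p$-permutation resolution of a one-dimensional module and cites Rickard's lifting theorem; you additionally spell out the verification of $\widehat{C}^*\otimes_\calO \widehat{C}\simeq\calO$ via splitness and the $\calO$-Künneth formula, and correctly flag that the uniqueness of the lifted \emph{complex} (not just the module) must be extracted from the proof of Rickard's theorem — a point the paper's one-line proof glosses over.
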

    \begin{proof}
        This follows immediately from the previous theorem, since the shifted chain complex $C[-h_C(1)]$ is an endosplit $p$-permutation resolution.
    \end{proof}

	\section{Basic properties of endotrivial complexes} \label{basicpropsofendotrivs}

	Recall that $O(T(kG))$ denotes the orthogonal unit group of the trivial source ring. The following proposition ensures that $\calE_k(G)$ is compatible with $O(T(kG))$.

	\begin{prop}
		The following map is a well-defined a group homomorphism.
		\begin{align*}
		\Lambda: \calE_k(G) &\to O(T(kG))\\
		[C] &\mapsto \sum_{i \in \Z}(-1)^i C_i
		\end{align*}
	\end{prop}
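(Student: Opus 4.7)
The plan is to verify three things: well-definedness on homotopy classes, multiplicativity under $\otimes_k$, and that the image lies in $O(T(kG))$.

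For well-definedness, I would appeal to the earlier characterization of bounded contractible complexes as finite direct sums of elementary complexes of the form $\cdots \to 0 \to M \xrightarrow{\cong} M \to 0 \to \cdots$. Each such summand concentrated in degrees $i, i-1$ contributes $(-1)^i[M] + (-1)^{i-1}[M] = 0$ to the alternating sum, so any contractible complex has vanishing Lefschetz invariant. Krull--Schmidt in ${}_{kG}\triv$ then supplies contractible $C', D'$ with $C \oplus C' \cong D \oplus D'$ whenever $C \simeq D$, and additivity of $[-]$ in $T(kG)$ gives $\Lambda(C) = \Lambda(D)$. Multiplicativity is immediate from the module-level description of the tensor product of complexes (the signs in the differentials are irrelevant to the classes in $T(kG)$): $\Lambda(C \otimes_k D) = \sum_{i,j}(-1)^{i+j}[C_i][D_j] = \Lambda(C)\cdot \Lambda(D)$.

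For orthogonality, the key intermediate observation is $\Lambda(C^*) = \Lambda(C)^*$, which follows from the identification $(C^*)_n = (C_{-n})^*$, additivity of the dual on $T(kG)$, and the reindexing $n \mapsto -n$ (which preserves $(-1)^n$). Applying $\Lambda$ to the homotopy equivalence $C \otimes_k C^* \simeq k[0]$ and using well-definedness together with multiplicativity then gives $\Lambda(C)\cdot \Lambda(C)^* = [k] = 1$ in $T(kG)$, placing $\Lambda(C)$ in $O(T(kG))$. None of these steps poses a genuine obstacle; the only point of care is the sign bookkeeping for the dual, which is calibrated by the sign conventions for $C^*$ precisely so that $\Lambda$ intertwines the two dualities without an extraneous sign.
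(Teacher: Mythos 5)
Your proposal is correct and follows essentially the same route as the paper: well-definedness via the decomposition of bounded contractible complexes into summands $0 \to M \xrightarrow{\cong} M \to 0$, and orthogonality by applying $\Lambda$ to $C \otimes_k C^* \simeq k[0]$ after checking compatibility with duals and tensor products. The only difference is that you spell out the sign bookkeeping that the paper dismisses as "a routine verification," and your calculations there are accurate.
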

	\begin{proof}
		To show that the map is well-defined, suppose $C\simeq C'$ are endotrivial complexes. Then there exist contractible complexes $D, D'$ such that $C \oplus D \simeq C' \oplus D'$, thus $\Lambda(C) + \Lambda(D) = \Lambda(C') + \Lambda(D') \in T(kG)$. Since every bounded contractible complex can be expressed as a finite direct sum of complexes of the form $0 \to M \to M \to 0$, $\Lambda(D) = \Lambda(D') = 0$, so $\Lambda(C) = \Lambda(C')$. A routine verification shows $\Lambda$ commutes with duals and tensor products. It follows that $\Lambda$ is a group homomorphism and  $\Lambda(C\otimes_k C^*) = \Lambda(C)\otimes_k \Lambda(C)^* = [k]$. Thus $\Lambda(C) \in O(T(kG))$.
	\end{proof}

	$\Lambda(C)$ is referred to as the \textit{Lefschetz invariant} of $C$. The image of $\Lambda$ will be of considerable interest in the sequel.

	\subsection{Relationships between h-marks and functorial constructions}

	\begin{remark}
		Any group isomorphism $f: G'\to G$ induces an algebra isomorphism $f: kG' \to kG$, hence a functor $\Iso_f: {}_{kG}\mathbf{mod} \to {}_{kG'}\mathbf{mod}$ which restricts to $\Iso_f: {}_{kG}\textbf{triv} \to {}_{kG'}\textbf{triv}$ and a group isomorphism $\Iso_f: \calE_k(G) \to \calE_k(G')$. Note $\Iso_f$ is the identity automorphism if $G= G'$ and $f$ is an inner automorphism of $G$.

        We define the group homomorphism $F$ as follows.

		\begin{align*}
		F: \prod_{P \in s_p(G)} \Z \times \Hom(N_G(P)/P, k^\times) &\to \prod_{P' \in s_p(G')} \Z \times \Hom(N_{G'}(P')/P', k^\times)\\
		(x_P, \rho_P)_{P\in s_p(G)} &\mapsto (x_{f(P')},  \rho_{f(P')}\circ f )_{P'\in s_p(G')}
		\end{align*}
        Then, it is routine to verify that the following diagram commutes.
		\begin{figure}[H]
			\centering
			\begin{tikzcd}
			\calE_k(G) \ar[r, "\Xi"] \ar[d, "\text{iso}_f"] & \prod_{P \in s_p(G)} \Z \times \Hom(N_G(P)/P, k^\times) \ar[d, "F"] \\
			\calE_k(G') \ar[r, "\Xi"] & \prod_{P' \in s_p(G')} \Z \times \Hom(N_{G'}(P')/P', k^\times)
			\end{tikzcd}
		\end{figure}
	\end{remark}

	By Proposition \ref{brauerpreservesendotriviality}, restriction to a subgroup, inflation from a quotient, and the Brauer construction all preserve endotrivial complexes as well. We next describe how the embedding $\Xi$ behaves with respect to endotriviality-preserving operations.

	\begin{prop}
		\begin{enumerate}
			\item Let $H \leq G$. The following diagram commutes, where $\pi$ is the projection map which applies restriction on group homomorphisms $\rho: N_G(P)/P \to k^\times$  via the inclusion $N_H(P)/P \hookrightarrow N_G(P)/P$.
			\begin{figure}[H]
				\centering
				\begin{tikzcd}
				\calE_k(G) \ar[r, "\Xi"] \ar[d, "\text{res}^G_H"] & \prod_{P \in s_p(G)} \Z \times \Hom(N_G(P)/P, k^\times) \ar[d, "\pi"] \\
				\calE_k(H) \ar[r, "\Xi"] & \prod_{P \in s_p(H)} \Z \times \Hom(N_H(P)/P, k^\times)
				\end{tikzcd}
			\end{figure}

			\item Let $Q \in s_p(G)$ be a $p$-subgroup of $G$, and regard $-(Q)$ as a functor ${}_{kG}\mathbf{mod} \to {}_{kN_G(Q)}\mathbf{mod}$ for ease of notation. Define the map $B$ by
			\begin{align*}
			B: \prod_{P \in s_p(G)} \Z \times \Hom(N_G(P)/P, k^\times) &\to  \prod_{P \in  s_p(N_G(Q)), Q\leq P} \Z \times \Hom(N_{N_G(Q)}(P)/P, k^\times) \\
			(x_P, \rho_P)_{P \in s_p(G)} &\mapsto
			\left(x_P, \rho_P|_{N_{N_G(Q)}(P)}\right)_{P \in s_p(N_G(Q)), Q \leq P}
			\end{align*}
			Then the following diagram commutes, where $\pi$ denotes projection:
			\begin{figure}[H]
				\centering
				\begin{tikzcd}
				\calE_k(G) \ar[r, "\Xi"] \ar[d, "-(Q)"]& \prod_{P \in s_p(G)} \Z \times \Hom(N_G(P)/P, k^\times) \ar[d, "B"] \\
				\calE_k(N_G(Q)) \ar[r, "\pi \circ \Xi"] &  \prod_{P \in s_p( N_G(Q)), Q \leq P} \Z \times \Hom(N_{N_G(Q)}(P)/P, k^\times)
				\end{tikzcd}
			\end{figure}

			\item Let $N \trianglelefteq G$. Define the map $I$ by
			\begin{align*}
			I: \prod_{P/N \in s_p(G/N)} \Z \times \Hom(N_{G/N}(P/N)/(P/N), k^\times) &\to  \prod_{P \in s_p(G)} \Z \times \Hom(N_G(P)/P,k^\times) \\
			(x_{P/N}, \rho_{P/N})_{P/N \in s_p(G/N)} &\mapsto \left(x_{PN/N},\Inf^{N_G(P)/P}_{N_{G/N}(PN/N)/(PN/N)}\rho_{PN/N}\right)_{P \in s_p(G)}
			\end{align*}
			Then the following diagram commutes:
			\begin{figure}[H]
				\centering
				\begin{tikzcd}
				\calE_k(G/N) \ar[r, "\Xi"] \ar[d, "\text{inf}^G_{G/N}"]& \prod_{P/N \in s_p(G/N)} \Z \times \Hom(N_{G/N}(P/N)/(P/N), k^\times)  \ar[d, "I"] \\
				\calE_k(G) \ar[r, "\Xi"] & (\prod_{P \in s_p(G)} \Z \times \Hom(N_G(P)/P,k^\times)
				\end{tikzcd}
			\end{figure}
		\end{enumerate}
		In particular, (a) and (c) along with the previous remark imply if $f: G' \to G$ is any group homomorphism, then the group homomorphism $\text{res}_f: \calE_k(G) \to \calE_k(G')$ induced by the restriction functor corresponding to $f$ preserves endotriviality.
	\end{prop}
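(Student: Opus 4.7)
The plan is to check each commutative diagram by chasing an endotrivial complex $C$ through both paths and comparing the resulting h-marks and homology characters at each $p$-subgroup. In all three cases, the underlying fact is a termwise natural isomorphism of chain complexes coming from a standard compatibility of the Brauer construction: with restriction in (a), with iterated Brauer quotients in (b), and with inflation in (c). Since restriction and inflation are exact functors that commute with taking homology, applying homology to these isomorphisms immediately transfers the invariants recorded by $\Xi$ in the prescribed way.

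For (a), fix $P \in s_p(H) \subseteq s_p(G)$. Proposition \ref{brauercommuteswithdualsandconj}(b) applied in each degree yields $(\Res^G_H C)(P) = \Res^{N_G(P)/P}_{N_H(P)/P} C(P)$, so by exactness of restriction the h-mark at $P$ is unchanged and the homology character becomes $\calH_C(P)$ composed with $N_H(P)/P \hookrightarrow N_G(P)/P$; this is exactly the $P$-component of $\pi \circ \Xi(C)$. For (b), let $P \in s_p(N_G(Q))$ with $Q \leq P$, so $Q \trianglelefteq P$ and Proposition \ref{brauerppermproperties}(d) gives a natural isomorphism $C(Q)(P) \cong C(P)$ in each degree; unpacking the actions identifies the $k[N_{N_G(Q)}(P)/P]$-structure with the restriction of the $k[N_G(P)/P]$-structure along the inclusion $N_{N_G(Q)}(P)/P = (N_G(P) \cap N_G(Q))/P \hookrightarrow N_G(P)/P$, which is precisely what $B$ records.

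For (c), the crux is to establish, for each $P \in s_p(G)$ with $\bar{P} := PN/N$, a natural termwise isomorphism $(\Inf^G_{G/N} M)(P) \cong \Inf^{N_G(P)/P}_{N_{G/N}(\bar{P})/\bar{P}} M(\bar{P})$, where the right-hand inflation is along the canonical map $N_G(P)/P \to N_{G/N}(\bar{P})/\bar{P}$ induced by $G \twoheadrightarrow G/N$. Reducing to permutation modules $M = kX$ via Proposition \ref{brauerppermproperties}(b), both sides have underlying basis the image of $X^P = X^{\bar{P}}$ (these coincide since $P$ acts through $\bar{P}$), and a direct comparison of the trace subspaces shows the quotients match; exactness of inflation then pushes this to homology and reproduces the effect of $I$. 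The closing assertion follows by factoring any $f: G' \to G$ as $G' \twoheadrightarrow G'/\ker f \xrightarrow{\sim} f(G') \hookrightarrow G$ and composing (c), the $\Iso_f$ equivalence from the preceding remark, and (a). I expect the main obstacle to be the inflation-Brauer identification in (c): one must check not only that the $P$-fixed and $\bar{P}$-fixed subspaces agree, but that the submodules of traces from proper subgroups also match. The latter hinges on the fact that in characteristic $p$, whenever $Q < P$ satisfies $\bar{Q} = \bar{P}$, the trace $\tr^P_Q$ on $(\Inf M)^Q = M^{\bar{P}}$ reduces to multiplication by $|P/Q|$, a positive power of $p$, hence vanishes — so only those $Q < P$ with $\bar{Q} < \bar{P}$ contribute, matching the trace subspaces appearing in $M(\bar{P})$.
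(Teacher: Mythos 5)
Your proposal is correct and follows essentially the same route as the paper: part (a) via Proposition \ref{brauercommuteswithdualsandconj}(b), part (b) via Proposition \ref{brauerppermproperties}(d) applied to $Q \trianglelefteq P \leq N_G(Q)$, and part (c) via the natural isomorphism $(\Inf^G_{G/N} M)(P) \cong \Inf\bigl(M(PN/N)\bigr)$, which the paper merely asserts and you verify by reducing to permutation modules and checking that the trace subspaces match (the vanishing of $\tr^P_Q$ when $\overline{Q} = \overline{P}$ is exactly the right observation). No gaps.
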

	\begin{proof}
		(a) follows from Proposition \ref{brauercommuteswithdualsandconj} (b). Let $Q \leq P \leq N_G(Q)$ (so $Q \trianglelefteq P \leq G$), then (b) follows by Proposition \ref{brauerppermproperties} (d). Finally (c) follows by observing that we have a natural isomorphism of $G/N$-modules $(\Inf^G_{G/N} M)(P) \cong \Inf^{N_G(P)}_{N_{G/N}(PN/N)} (M(PN/N))$ for any $P \in s_p(G)$, where in this case, $-(P)$ is regarded as a functor ${}_{kG}\textbf{triv} \to {}_{kN_G(P)}\textbf{triv}$.
	\end{proof}

	\begin{remark}\label{trivsourceringdecomp}
		Given some endotrivial complex $C$, it is immediate what orthogonal unit it corresponds to, $\Lambda(C)$. However given only the h-marks $h_C$, the corresponding orthogonal unit is more difficult to determine.

		Boltje and Carman in \cite{BC23} determined a decomposition of $O(T(kG))$ which we denote by $\kappa$, \[\kappa: O(T(kG)) \xrightarrow{\cong} (B(S)^G)^\times \times \Hom(G, k^\times) \times \left(\prod_{P \in s_p(G)} \Hom(N_G(P)/PC_G(P), k^\times)\right)'.\] Here $(B(S)^G)^\times \leq B(S)^\times$, with  $S \in \text{Syl}_p(G)$, is defined as follows. Given $x \in B(S)^\times$, we have $x \in (B(S)^G)^\times$ if and only if $|x^P| = |x^Q|$ for all pairs $P,Q$ of $G$-conjugate subgroups of $S$. We refer the reader to \cite{BaC19} for details on $(B(S)^G)^\times$, and more generally for the Burnside ring of a fusion system.

		The third constituent is subject to the coherence condition \[\chi_P(xPC_G(P)) = \chi_{P\langle x_p\rangle}\big(x P\langle x_p\rangle C_G(P\langle x_p\rangle)\big)\] for all $P \in s_p(G)$ and $x \in G$. Here $x_p$ denotes the $p$-part of $x$; we have a unique decomposition $x = x_px_{p'} = x_{p'}x_p$, where $x_p$ has $p$-power order, and $x_{p'}$ has $p'$-order, and $P\langle x_p\rangle$ is the subgroup generated by $P$ and $x_p$. We equivalently consider the homomorphisms in the third constituent as elements of $\Hom(N_G(P), k^\times)$ whose kernel contains $PC_G(P)$. We denote this component by $\mathcal{L}_G$, for ``local'' homology. The middle constituent on the other hand corresponds to the ``global'' homology.

		We describe $\kappa$ in greater detail, following \cite{BC23}. First, \cite[Theorem A]{BC23} states that there is an injective homomorphism \[\beta_G: T(kG) \to \left(\prod_{P \in s_p(G)} R(K[N_G(P)/P])\right)^G,\] whose image consists of character tuples satisfying the coherence condition from before: for each $P \in s_p(G)$ and $x \in N_G(P)$, one has $\chi_P(xP) = \chi_{P\langle x_p\rangle} (xP\langle x_p\rangle)$. In particular,
		\[(\beta_G(x))_P = K\otimes_\bigO \widehat{x(P)} \in R(K[N_G(P)/P]),\] where $\widehat{(-)}$ denotes the isomorphism $T(\bigO G) \cong T(kG)$ induced by taking the unique lift of a $p$-permutation $kG$-module to a $p$-permutation $\bigO G$-module. We denote the subgroup of \textit{coherent} tuples satisfying this condition by \[\left(\prod_{P \in s_p(G)} R(K[N_G(P)/P])\right)'.\]
		Since the unit group of $R_K(G)$ is generated by virtual $k$-dimension one characters, for every orthogonal unit $u\in O(T(kG))$, there exist homomorphisms $\rho_P\in \Hom(N_G(P)/P, K^\times)$ and signs $\epsilon_P \in \{\pm 1\}$ such that  \[\beta_G(u) = (\epsilon_P\cdot \rho_P)_{P \in s_p(G)}.\]

		However, the coherence condition implies that if $x \in G$ is a $p$-element, $\rho_P(x) = 1$, so each $\rho_P$ descends to a degree one Brauer character, hence a homomorphism $\overline{\rho_P} \in \Hom(N_G(P)/P, k^\times)$. Therefore, $\beta_G$ may be regarded as a group homomorphism \[\beta_G: O(T(kG)) \to \prod_{P \in s_p(G)}\{\pm 1\} \times \Hom(N_G(P)/P, k^\times).\]

        In this sense, $\beta_G$ can be thought of as the trivial source ring analogue of $\Xi$. One may explicitly compute $\epsilon_P \cdot \rho_P$ by taking the image of $u(P)\in O(T(k[N_G(P)/P]))$ in $R_k(G)$ to obtain the degree one Brauer character.

		Now, for $S \in \Syl_p(G)$, there exist a sequence of homomorphisms \[(B(S)^G)^\times \hookrightarrow B(G)^\times \xrightarrow{k[-]} O(T(kG)) \xrightarrow{\Res^G_S} O(T(kS)) = T(kS)^\times \xrightarrow{\cong} B(S)^\times\] whose composition is the identity, giving a decomposition \[O(T(kG)) = (B(S)^G)^\times \times \ker(\operatorname{res}^G_S: O(T(kG)) \to O(T(kS)).\] It follows that the kernel is given precisely by units $u\in O(T(kG))$ for which $\beta_G(u) = (\rho_P)_{P\in s_p(G)}$ for group homomorphisms $\rho_P: N_G(P)/P \to k^\times$. This implies an isomorphism \[O(T(kG)) \cong (B(S)^G)^\times \times \left(\prod_{P\in s_p(G)} \Hom(N_G(P)/P, k^\times)\right)'.\]

		Now, we obtain a sequence of homomorphisms whose composition is the identity, \[\Hom(G,k^\times) \hookrightarrow \left(\prod_{P\in s_p(G)} \Hom(N_G(P)/P, k^\times)\right)' \xrightarrow{(\rho_P) \mapsto \rho_1} \Hom(G,k^\times).\]  Here the latter map has kernel given by coherent tuples $(\rho_P)_{P\in s_p(G)}$ for which $PC_G(P) \leq \ker (\rho_P)$, which is precisely $\calL_G$. Therefore, we obtain a second isomorphism \[\left(\prod_{P\in s_p(G)} \Hom(N_G(P)/P)\right)' \cong \Hom(G, k^\times) \times \left(\prod_{P\in s_p(G)} \Hom(N_G(P)/PC_G(P))\right)' = \Hom(G,k^\times) \times \calL_G.\] This data completely describes $\kappa$. All claims described here are presented in greater detail and proven in \cite{BC23}.
	\end{remark}

    Define the \textit{dimension homomorphism} $\dim: \Z \to \{\pm 1\}$ by $\dim(i) = (-1)^i$. 	Then, the next proposition follows from the previous discussion and the definition of $\Xi$.

	\begin{prop}
		The following diagram commutes:
		\begin{figure}[H]
			\centering
			\begin{tikzcd}
			\calE_k(G) \ar[r, "\Xi"] \ar[d, "\Lambda"] & \prod_{P\in s_p(G)} \big(\Hom(N_G(P)/P, k^\times) \times \Z \big) \ar[d, "\id\times \dim"]\\
			O(T(kG)) \ar[r, "\beta_G"] & \prod_{P\in s_p(G)} \big(\Hom(N_G(P)/P, k^\times) \times \{\pm 1\} \big)
			\end{tikzcd}
		\end{figure}
	\end{prop}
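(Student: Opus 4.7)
Both vertical maps and both horizontal maps in the diagram are group homomorphisms, so to check commutativity it suffices to verify that for an arbitrary endotrivial complex $C \in \calE_k(G)$ and each $P \in s_p(G)$, the $P$-th coordinates of the two paths agree. Set $Q := N_G(P)/P$ for brevity. The plan is to compute $\beta_G(\Lambda(C))_P$ directly using additivity, then pass to Brauer characters to recognize the result as $(-1)^{h_C(P)}\calH_C(P)$.

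First I would expand, using additivity of $\beta_G$ and the formula $\beta_G([M])_P = [K \otimes_\bigO \widehat{M(P)}]$ from Remark \ref{trivsourceringdecomp}:
\[
\beta_G(\Lambda(C))_P \;=\; \sum_{i\in\Z}(-1)^i\bigl[K\otimes_\bigO \widehat{C_i(P)}\bigr] \;\in\; R(K[Q]).
\]
Then I would post-compose with the decomposition map $d : R(K[Q]) \to R_k(Q)$ (restriction of ordinary characters to $p$-regular elements), which sends $[K\otimes_\bigO \widehat{M}]$ to $[M]$ for any $p$-permutation $kQ$-module $M$. Applying the standard Euler characteristic identity $\sum_i (-1)^i[D_i] = \sum_i (-1)^i[H_i(D)]$ in $R_k(Q)$ to the bounded complex $D = C(P)$, together with Theorem \ref{endotrivdef2}, collapses the sum to $(-1)^{h_C(P)}[\calH_C(P)]$, since the only nonzero homology of $C(P)$ is $\calH_C(P)$ concentrated in degree $h_C(P)$.

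On the other side, $\Xi([C])_P = (h_C(P),\calH_C(P))$, and applying $\id \times \dim$ replaces the integer by its sign $(-1)^{h_C(P)}$. Under the standard identification of $\{\pm 1\}\times\Hom(Q,k^\times)$ with virtual degree-one Brauer characters in $R_k(Q)$ described in Remark \ref{trivsourceringdecomp}, this pair corresponds exactly to $(-1)^{h_C(P)}[\calH_C(P)] \in R_k(Q)$, matching the computation above.

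The main obstacle I anticipate is promoting equality in $R_k(Q)$ back to equality at the level of the targets of $\beta_G$ and $\id\times\dim$, i.e.\ as pairs in $\{\pm 1\}\times\Hom(Q,k^\times)$. This uses orthogonality: since $\Lambda(C)\in O(T(kG))$, the coherence condition of Remark \ref{trivsourceringdecomp} forces $\beta_G(\Lambda(C))_P = \epsilon_P\rho_P$ for a character $\rho_P$ trivial on $p$-elements, and such virtual degree-one $K$-characters are uniquely determined by their image under $d$. No single step is genuinely difficult; the bulk of the work is keeping the several parallel identifications among $K$-characters trivial on $p$-parts, degree-one Brauer characters, and $\Hom(Q,k^\times)$ straight.
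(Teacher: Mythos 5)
Your proposal is correct and follows essentially the same route the paper intends: the paper offers no detailed proof, simply asserting the proposition "follows from the previous discussion and the definition of $\Xi$," where that discussion (Remark \ref{trivsourceringdecomp}) already notes that $\epsilon_P\cdot\rho_P$ is computed by passing $u(P)$ to $R_k(N_G(P)/P)$ — exactly the Euler-characteristic-of-homology computation you carry out. Your final paragraph correctly identifies and resolves the only point of care (recovering the pair $(\epsilon_P,\rho_P)$ from its image under the decomposition map, using that $\rho_P$ is trivial on $p$-elements), so the argument is complete.
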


	Remark \ref{burnsideringmarkhominverse} described the inverse of the $\Q$-linearized mark homomorphism $\Q\mathfrak{m}: \Q B(G) \to \left(\prod_{H\leq G} \Q\right)^G$. Given the h-marks and the nonzero homology of an endotrivial complex $C$, we can read off parts of the corresponding orthogonal unit according to the decomposition described in Remark \ref{trivsourceringdecomp} as follows:

	\begin{prop}
		The following diagram commutes:
		\begin{figure}[H]
			\centering
			\begin{tikzcd}
			\calE_k(G) \ar[r, "h"] \ar[d, "\Lambda"] &\left(\prod_{P \in s_p(G)} \Z\right)^G \ar[d, "\phi "] \\
			O(T(kG)) \ar[r, "\operatorname{res}^G_S "]&  \Q (B(S)^G)^\times
			\end{tikzcd}
		\end{figure}
		Here, for $S$ a Sylow $p$-subgroup of $G$, $\phi: \left(\prod_{P\in s_p(G)}\Z\right)^G \to \Q (B(S)^G)^\times$ is:
		\[\left(\prod_{P\in s_p(G)} \Z\right)^G \xrightarrow{\prod \dim}  \left(\prod_{P\leq S}\{\pm 1\} \right)^G \xrightarrow{\Q\mathfrak{m}\inv} \Q (B(S)^G)^\times\]

		In particular, the images of the two composites of maps in the original commutative diagram lay in $(B(S)^G)^\times$.
	\end{prop}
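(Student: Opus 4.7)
The plan is to evaluate both composites on an arbitrary endotrivial complex $C \in \calE_k(G)$ and show that they produce identical mark vectors in $(\prod_{P \leq S} \Z)^G$ under the $\Q$-linearized mark homomorphism $\Q\mathfrak{m}$. Since $\Q\mathfrak{m}$ is injective, this will establish commutativity; the computation will simultaneously show the image sits in $(B(S)^G)^\times$.

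The key tool is the Euler-characteristic identity: for any bounded complex $D$ of finite-dimensional $k$-vector spaces, $\sum_{i \in \Z} (-1)^i \dim_k D_i = \sum_{i \in \Z} (-1)^i \dim_k H_i(D)$. First, I would fix $P \in s_p(G)$ with $P \leq S$ and apply this identity to the complex $C(P)$. Since $S$ is a $p$-group, the $k$-linearization map $B(S) \to T(kS)$ is an isomorphism (Remark \ref{burnsideringmarkhominverse}), and for a $p$-permutation $kS$-module $M$ the corresponding mark at $P$ is $\dim_k M(P)$; this follows from Proposition \ref{brauerppermproperties}(b) applied to the permutation module summands of $M$, extended by linearity. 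Thus
\[
\mathfrak{m}\bigl(\operatorname{res}^G_S \Lambda(C)\bigr)(P) = \sum_{i \in \Z} (-1)^i \dim_k C_i(P) = \sum_{i \in \Z} (-1)^i \dim_k H_i(C(P)).
\]

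By Theorem \ref{endotrivdef2}, $C(P)$ has nonzero homology concentrated in a single degree $h_C(P)$ of $k$-dimension one, so the right-hand side collapses to $(-1)^{h_C(P)} = \dim(h_C(P))$. Varying $P$ over subgroups of $S$ shows that the mark vector of $\operatorname{res}^G_S \Lambda(C)$ agrees with the image of $h(C)$ under $\prod \dim$. Applying $\Q\mathfrak{m}\inv$ to both sides yields $\operatorname{res}^G_S \Lambda(C) = \phi(h(C))$ in $\Q B(S)$, which is the desired commutativity.

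The remaining claim is that this common element actually lies in $(B(S)^G)^\times$ rather than merely in $\Q(B(S)^G)^\times$. This will follow from two observations: $\Lambda(C)$ is an orthogonal unit, so $\operatorname{res}^G_S \Lambda(C) \in O(T(kS)) = T(kS)^\times$ corresponds under the isomorphism to an element of $B(S)^\times$; and $h_C$ is a $G$-invariant class function on $s_p(G)$ by Proposition \ref{brauercommuteswithdualsandconj}(c), so $|u^P| = |u^Q|$ whenever $P =_G Q$ are subgroups of $S$, placing $u := \operatorname{res}^G_S \Lambda(C)$ in $(B(S)^G)^\times$. I do not foresee a significant obstacle: the only point requiring mild care is the identification of marks with Brauer-construction dimensions in the $p$-group setting, which is immediate from the permutation-module case and additivity on split exact sequences.
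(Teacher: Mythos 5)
Your proof is correct, but it takes a more self-contained route than the paper. The paper's own argument is a diagram chase: it splices in $\calE_k(S)$ and $\bigl(\prod_{P\leq S}\{\pm 1\}\bigr)^G$, observes that the outer triangles commute by the definition of $\phi$ and the compatibility of $h$ with restriction, and then reduces (via injectivity of $\Q\mathfrak{m}$) to the commutativity of the bottom trapezoid, which it deduces from the immediately preceding proposition relating $\Xi$, $\Lambda$, $\beta_G$, and $\id\times\dim$ --- i.e.\ it leans on the Boltje--Carman homomorphism $\beta_G$ and the fact that $\epsilon_P = (-1)^{h_C(P)}$. You instead compute the marks of $\operatorname{res}^G_S\Lambda(C)$ directly: the identification of the mark at $P$ with $\dim_k M(P)$ for $p$-permutation $kS$-modules (via Proposition \ref{brauerppermproperties}(b) and additivity), the Euler-characteristic identity $\sum_i(-1)^i\dim_k C_i(P)=\sum_i(-1)^i\dim_k H_i(C(P))$, and Theorem \ref{endotrivdef2} collapsing the right-hand side to $(-1)^{h_C(P)}$. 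This is the same underlying identity the paper's cited proposition encodes, but your version avoids invoking $\beta_G$ altogether and makes the sign computation explicit, which is arguably more transparent; the paper's version buys brevity and consistency with its running use of the $\Xi$/$\beta_G$ dictionary. Your handling of the final claim (unit-ness from orthogonality of $\Lambda(C)$ plus $G$-stability of marks from Proposition \ref{brauercommuteswithdualsandconj}(c)) is also sound and is essentially what the paper leaves implicit.
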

	\begin{proof}
		We have a diagram as follows:
		\begin{figure}[H]
			\centering
			\begin{tikzcd}
			\calE_k(S) \ar[dr, "h"]\\
			\calE_k(G) \ar[r, "h"] \ar[d, "\Lambda"] \ar[u, "\operatorname{res}^G_S"]&  \left(\prod_{P \in s_p(G)} \Z\right)^G \ar[d, " \phi"] \ar[rd, "\prod\dim"]  \\
			O(T(kG)) \ar[r, " \operatorname{res}^G_S"]& \Q (B(S)^G)^\times \ar[r, "\Q\mathfrak{m}"]  & \left(\prod_{P\leq S} \{\pm 1\}\right)^G
			\end{tikzcd}
		\end{figure}
		By construction of $\phi$, the right-most triangle commutes, and the top triangle commutes by properties of $h$ and restriction. Since $\Q\mathfrak{m}$ is an isomorphism, it suffices to show the trapezoid on the bottom row commutes, however this follows from the previous proposition.
	\end{proof}

	\subsection{Endotrivial complexes induce splendid autoequivalences}

	The goal of this section is to prove the following:

	\begin{theorem} \label{srcinductionthm}
		If $C$ is a endotrivial complex, then $\Ind^{G\times G}_{\Delta G}C$ is a splendid Rickard autoequivalence of $kG$, with $\Lambda(\Ind^{G\times G}_{\Delta G} C) = \Ind^{G\times G}_{\Delta G} (\Lambda(C))$.
	\end{theorem}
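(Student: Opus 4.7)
The plan is to verify directly that $B := \Ind^{G\times G}_{\Delta G} C$, regarded as a complex of $(kG,kG)$-bimodules (equivalently, $k[G\times G]$-modules via the standard identification), satisfies the three defining properties of a splendid Rickard autoequivalence of $kG$: (i) each term $B_i$ is a $p$-permutation $k[G\times G]$-module whose indecomposable summands have vertices of the form $\Delta P := \{(g,g) : g \in P\}$ for some $p$-subgroup $P \leq G$; (ii) $B \otimes_{kG} B^* \simeq kG[0]$ and $B^* \otimes_{kG} B \simeq kG[0]$ in $K^b$ of $(kG,kG)$-bimodules; (iii) the Lefschetz formula $\Lambda(B) = \Ind^{G\times G}_{\Delta G} \Lambda(C)$ holds.

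For (i), I would first note that pulling back a $p$-permutation $kG$-module along the isomorphism $\Delta G \xrightarrow{\sim} G$, $(g,g) \mapsto g$, yields a $p$-permutation $k[\Delta G]$-module whose indecomposable summands have vertices $\Delta P$ for $P$ a $p$-subgroup of $G$. Induction from $\Delta G$ to $G \times G$ preserves $p$-permutation modules (since induction preserves permutation modules and commutes with direct summands), and every indecomposable summand of $\Ind^{G\times G}_{\Delta G} M$ has vertex a $(G\times G)$-conjugate of a subgroup of a vertex of $M$; any such subgroup is again of the form $\Delta Q$ with $Q \leq P$ a $p$-subgroup. Applying this termwise to $C$ yields the splendid condition.

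For (ii), the crucial identities are the bimodule tensor formula
\[
\Ind^{G\times G}_{\Delta G}(X) \otimes_{kG} \Ind^{G\times G}_{\Delta G}(Y) \;\cong\; \Ind^{G\times G}_{\Delta G}(X \otimes_k Y)
\]
for $kG$-modules $X,Y$ (where the right-hand side uses the diagonal $G$-action), together with the dual formula $\Ind^{G\times G}_{\Delta G}(X)^* \cong \Ind^{G\times G}_{\Delta G}(X^*)$. Both are standard and follow by writing $\Ind^{G\times G}_{\Delta G}(X) = k[G\times G]\otimes_{k[\Delta G]} X$, computing the tensor product directly using the decomposition $k[G\times G] = \bigoplus_{g\in G} (g,1)\cdot k[\Delta G]$, and identifying the resulting bimodule with the claimed one; the dual formula reduces to the fact that $\Delta G$ has finite index in $G\times G$, so induction and coinduction agree. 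Both identities extend termwise to bounded chain complexes, and induction preserves homotopy equivalence (being an additive functor). Combining these with $C^* \otimes_k C \simeq k[0]$ and the identification $\Ind^{G\times G}_{\Delta G}(k) \cong kG$ as $(kG,kG)$-bimodules yields the desired homotopy equivalences.

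Finally, (iii) is immediate: induction is additive, so it commutes with the alternating sum of components defining the Lefschetz invariant, giving $\Lambda(B) = \Ind^{G\times G}_{\Delta G}\Lambda(C) \in T(k[G\times G])$. The main obstacle, as I see it, is not conceptual but bookkeeping: carefully confirming the $(kG,kG)$-bimodule structures match on both sides of the tensor and dual identities (sign conventions on the dual complex and the twist inherent in the bimodule dual), and ensuring that the vertex subgroups stay genuinely of the form $\Delta P$ after induction and taking summands rather than merely of ``twisted diagonal'' type. Once these identifications are fixed, the theorem follows essentially formally from the definition of endotriviality and the transitivity/Frobenius-reciprocity properties of induction from the diagonal.
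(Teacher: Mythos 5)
Your proposal is correct and follows essentially the same route as the paper: termwise verification of the splendid vertex condition via induction from $\Delta G$, the tensor identity $\Ind^{G\times G}_{\Delta G}(X)\otimes_{kG}\Ind^{G\times G}_{\Delta G}(Y)\cong \Ind^{G\times G}_{\Delta G}(X\otimes_k Y)$, compatibility of induction with duals (which the paper verifies via adjunction plus an explicit bimodule-side-swap isomorphism, where you invoke induction $=$ coinduction for finite index — both standard), and additivity of induction for the Lefschetz statement. Note that your worry about vertices being ``genuinely of the form $\Delta P$'' is moot, since conjugates of $\Delta Q$ are twisted diagonal and that is all Definition \ref{SRCdef} requires.
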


	We first review some necessary definitions. Here, we relax the definition of splendid Rickard complexes defined by Rickard in \cite{R96}, by allowing algebras to be direct summands of group algebras rather than block algebras.

	\begin{definition}\label{SRCdef}
		Let $A$ and $B$ be direct summands of group algebras $kG$ and $kH$ respectively. We say a chain complex $\Gamma$ of $(A,B)$-bimodules is a \textit{splendid Rickard complex between $A$ and $B$} if the following hold:
		\begin{enumerate}
			\item $\Gamma \otimes_B \Gamma^* \simeq A$.
			\item $\Gamma^* \otimes_A \Gamma \simeq B$.
			\item Each component of $\Gamma$ is a $p$-permutation module when viewed as an $(A\otimes_k B^{op})$-module, and each indecomposable constituent has a \textit{twisted diagonal} vertex, i.e. a subgroup of $G\times H$ of the form $\Delta(P, \phi, Q) = \{(\phi(g),g) \mid g \in Q\}$ for some $P \in s_p(G)$, $Q \in s_p(H)$, and $\phi: Q \to P$ an isomorphism.
		\end{enumerate}
		We say $\Gamma$ induces a \textit{splendid (derived) equivalence} $D^b({}_A\mathbf{mod}) \cong D^b({}_B\mathbf{mod})$, which is induced by the functors by $\Gamma\otimes_B -$ and $\Gamma^*\otimes_A -$. In fact, these functors induce equivalences $K^b({}_A\mathbf{mod}) \cong K^b({}_B\mathbf{mod})$ as well.

		It is easy to see that the set of all splendid Rickard autoequivalences of $A$ modulo homotopy equivalence forms a group with addition induced by $\otimes_A$. We denote this group by $\calS_k(A)$.
	\end{definition}

	$p$-permutation equivalences, first defined in \cite{BX07} by Boltje and Xu, can be viewed as an analogue of splendid Rickard complexes on a representation ring level. We state the more general definition given in \cite{BP20}, which no longer assumes shared Sylow subgroups.

	\begin{definition}\label{ppermdef}
		Let $A$ and $B$ be direct summands of group algebras $kG$ and $kH$ respectively. Write $T(A,B)$ for the Grothendieck group of $p$-permutation (when viewed as an $(A\otimes_k B^{op})$-module) $(A,B)$-bimodules. Let $C$ be a direct summand of the group algebra $kK$. The tensor product $\otimes_B$ induces a bilinear map $\cdot_B: T(A,B) \times T(B,C) \to T(A,C)$. Denote by $T^\Delta(A,B)$ the subgroup of $T(A,B)$ generated by bimodules with twisted diagonal vertices.

		We say $\gamma \in T^\Delta(A,B)$ is a \textit{$p$-permutation equivalence} if
		\[\gamma \cdot_A\gamma^* = [B]\in T^\Delta(B,B) \text{ and } \gamma^* \cdot_B \gamma = [A] \in T^\Delta(A,A).\]

		In this case, $\gamma$ induces a group isomorphism $T(A) \cong T(B)$ via the homomorphisms $\gamma \cdot_B -$ and $\gamma^* \cdot_A -$. Denote the set of $p$-permutation equivalences between $A$ and $B$ by $O(T^\Delta(A,B))$. If $A = B$, it is easy to see that  $O(T^\Delta(A,A))$ forms a group with addition induced by $\cdot_A$.
	\end{definition}

	The composite of the functors $\Iso_G^{\Delta G}: {}_{kG}\mathbf{mod} \to {}_{k[\Delta G]}\mathbf{mod},$ where $\Delta G$ is the diagonal subgroup $\Delta G = \{(g,g) : g\in G\},$ induction $\Ind^{G\times G}_{\Delta G}: {}_{k[\Delta G]}\mathbf{mod} \to {}_{k[G\times G]}\mathbf{mod},$ and the equivalence of categories given by the identification $ {}_{k[G\times G]}\mathbf{mod} \cong {}_{kG}\mathbf{mod}_{kG}$ via the group action $g\cdot m\cdot h := (g,h\inv) \cdot m$ will be abusively denoted $\Ind^{G\times G}_{\Delta G}: {}_{kG}\mathbf{mod} \to {}_{kG}\mathbf{mod}_{kG}$ when the context is clear. We will show that this functor transfers the necessary properties of endotriviality to splendor. The next two propositions are elementary. Note that a natural isomorphism of functors on preadditive categories extends to a natural isomorphism of functors on their chain complex categories as well.

	\begin{prop} \label{inddualcommute}
		Let $H \leq G$. The contravariant composite of functors $\Ind^G_H \circ (-)^*$, $(-)^* \circ \Ind^G_H: {}_{kH}\mathbf{mod} \to {}_{kG}\mathbf{mod}$ are naturally isomorphic.
	\end{prop}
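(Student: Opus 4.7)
The plan is to construct the natural isomorphism by combining tensor–hom adjunction with the classical fact that induction and coinduction agree (naturally) for a finite-index subgroup. Concretely, for $M \in {}_{kH}\catmod$, tensor–hom adjunction yields
\[(\Ind^G_H M)^* = \Hom_k(kG \otimes_{kH} M, k) \;\xrightarrow{\cong}\; \Hom_{kH}(kG, \Hom_k(M,k)) = \Hom_{kH}(kG, M^*),\]
a $k$-linear isomorphism natural in $M$. Equipping the right-hand side with the coinduction left $kG$-action $(g\cdot \phi)(x) := \phi(g^{-1}x)$, a direct check on pure tensors confirms the above map is $kG$-equivariant, with the paper's convention $(g\cdot f)(m) = f(g^{-1}m)$ for the dual action on $(\Ind^G_H M)^*$.

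Next, since $[G:H]$ is finite, I would exhibit an explicit $kG$-module isomorphism $\Psi_M: \Ind^G_H M^* = kG \otimes_{kH} M^* \to \Hom_{kH}(kG, M^*)$. Fixing a set $[G/H]$ of left coset representatives, set $\Psi_M(g \otimes f)$ to be the unique $kH$-linear map $kG \to M^*$ sending $g \mapsto f$ and every other coset representative to $0$; equivalently,
\[\Psi_M(g \otimes f)(g'h) = \delta_{g,g'}\, (h^{-1}\cdot f) \quad\text{for } g,g' \in [G/H],\; h \in H.\]
A routine verification shows $\Psi_M$ is well-defined, independent of the choice of $[G/H]$, $kG$-linear, and natural in $M$. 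Composing $\Psi_M$ with the tensor–hom isomorphism gives the desired natural isomorphism $\Ind^G_H(M^*) \cong (\Ind^G_H M)^*$, and extending componentwise (as noted in the paragraph preceding the proposition) promotes it to chain complexes.

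The work here is essentially bookkeeping, so there is no real technical obstacle; the only delicate point is keeping the $kG$-actions on all four modules $\Ind^G_H M^*$, $(\Ind^G_H M)^*$, $\Hom_k(kG \otimes_{kH} M, k)$, and $\Hom_{kH}(kG, M^*)$ aligned once the paper's sign and inverse conventions for duals are pinned down. With those fixed, naturality in $M$ is inherited from the naturality of the adjunction counit and the manifest functoriality of $\Psi_M$ in the $M^*$ slot.
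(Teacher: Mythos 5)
Your proof is correct, but it takes a genuinely different route from the paper's. The paper argues entirely formally via the Yoneda embedding: it shows $\Hom_{kG}(-,\Ind^G_H(M^*))$ and $\Hom_{kG}(-,(\Ind^G_H M)^*)$ are naturally isomorphic by chaining together the tensor--hom adjunction, the projection formula $\Ind^G_H(\Res^G_H V\otimes_k W)\cong V\otimes_k\Ind^G_H W$, and the biadjunction between induction and restriction, never writing down an element-level map. You instead identify $(\Ind^G_H M)^*$ with the coinduced module $\Hom_{kH}(kG,M^*)$ and then invoke (with an explicit coset-representative formula) the agreement of induction and coinduction for a finite-index subgroup. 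Your $\Psi_M$ checks out: with the convention that the $kH$-linearity in $\Hom_{kH}(kG,M^*)$ is the one forced by the tensor--hom adjunction (i.e.\ coming from right multiplication of $H$ on $kG$, matched against the contragredient action on $M^*$), the formula $\Psi_M(g\otimes f)(g'h)=\delta_{g,g'}(h^{-1}\cdot f)$ is well defined, $kG$-equivariant for the action $(g\cdot\phi)(x)=\phi(g^{-1}x)$, independent of the choice of representatives, and natural in $M$. The trade-off is the expected one: the paper's argument is shorter and sidesteps all coset and action bookkeeping, while yours produces an explicit isomorphism one could actually compute with; the bookkeeping you flag as the delicate point is real but is exactly what the Yoneda-style argument is designed to avoid. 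Either proof extends to chain complexes componentwise as the paper notes.
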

	\begin{proof}
		Let $M$ be a $kH$-module. By the Yoneda embedding, we have a isomorphism $\Ind^G_H (M^*) \cong (\Ind^G_H M)^*$ natural in $M$ if and only if the functors $\Hom_{kG}(-, \Ind^G_H (M^*))$ and $\Hom_{kG}(-, (\Ind^G_H M)^*)$ are naturally isomorphic as functors, and this isomorphism is natural in $M$. Let $N$ be any $kG$-module, and denote the trivial $kG$-module by $k_G$ for clarity. Then, using the tensor-hom adjunction, the Frobenius property $\Ind^G_H(\Res^G_H V \otimes_k  W) \cong  V \otimes_k \Ind^G_H W$, and the biadjunction between induction and restriction (which are all natural in both arguments) yields the following sequence of natural isomorphisms.
		\begin{align*}
		\Hom_{kG}(N, \Ind^G_H(M^*))&\cong \Hom_{kH}(\Res^G_H N, M^*)\\
		&\cong \Hom_{kH}(M \otimes_k (\Res^G_H N), \Res^G_H k_G)\\
		&\cong \Hom_{kG}(\Ind^G_H(M \otimes_k \Res^G_H N), k_G)\\
		&\cong \Hom_{kG}((\Ind^G_H M)\otimes_k N, k_G)\\
		&\cong \Hom_{kG}(N, (\Ind^G_H M)^*)
		\end{align*}
	\end{proof}

	\begin{prop}
		The bifunctors $\Ind^{G\times G}_{\Delta G}(-)\otimes_{kG} \Ind^{G\times G}_{\Delta G}(-)$ and $\Ind^{G\times G}_{\Delta G}(- \otimes_k -): {}_{kG}\mathbf{mod}\times {}_{kG}\mathbf{mod} \to {}_{kG}\mathbf{mod}_{kG} $ are naturally isomorphic in both arguments.
	\end{prop}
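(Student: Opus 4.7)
The plan is to construct an explicit natural transformation between the two bifunctors and verify it is a bimodule isomorphism; naturality in both variables will then follow immediately from the explicit formula. Conceptually this is the standard Frobenius/projection formula for the inclusion $\Delta G \hookrightarrow G \times G$, but because subsequent sections appeal to this isomorphism for chain complexes, a concrete module-theoretic realization is preferable to an abstract adjunction argument.

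First, I would fix an explicit model for the induced bimodule. For a $kG$-module $M$, every element of $\Ind^{G\times G}_{\Delta G} M = k[G\times G] \otimes_{k[\Delta G]} M$ admits a unique expression of the form $(g,1)\otimes m$ with $g\in G$, $m\in M$, since $[G\times G / \Delta G]$ is indexed by $\{(g,1) : g \in G\}$. Writing this simply as $g\otimes m$ and unravelling the identification ${}_{k[G\times G]}\mathbf{mod}\cong {}_{kG}\mathbf{mod}_{kG}$ given by $g\cdot x\cdot h = (g,h\inv)\cdot x$, the bimodule action becomes $a\cdot(g\otimes m)\cdot b = agb\otimes b\inv m$ for $a,b\in G$. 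In particular, $\Ind^{G\times G}_{\Delta G} M$ is free of rank $\dim_k M$ as a right $kG$-module, with generators $\{1\otimes m_i\}$ for any $k$-basis $\{m_i\}$ of $M$.

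Next, I would define
\[
\phi_{M,N}\colon \Ind^{G\times G}_{\Delta G} M \otimes_{kG} \Ind^{G\times G}_{\Delta G} N \longrightarrow \Ind^{G\times G}_{\Delta G}(M\otimes_k N), \qquad (g\otimes m)\otimes_{kG}(h\otimes n)\;\longmapsto\; gh\otimes (h\inv m\otimes n).
\]
The $kG$-balancing check comes down to the identity that both $(gc\otimes c\inv m)\otimes_{kG}(h\otimes n)$ and $(g\otimes m)\otimes_{kG}(ch\otimes n)$ map to $gch\otimes(h\inv c\inv m\otimes n)$, which is immediate. Verifying that $\phi_{M,N}$ is a $(kG,kG)$-bimodule homomorphism is a direct computation using the explicit bimodule structure above, and an inverse is given by $\psi_{M,N}(x\otimes (m\otimes n)) := (x\otimes m)\otimes_{kG}(1\otimes n)$, which is well defined since every simple tensor on the left equals $(gh\otimes h\inv m)\otimes_{kG}(1\otimes n)$ by sliding $h$ across $\otimes_{kG}$. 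A dimension count confirms this: both sides have $k$-dimension $|G|\cdot\dim_k M\cdot\dim_k N$, the left-hand side because the first factor is free of rank $\dim_k M$ as a right $kG$-module.

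Finally, naturality of $\phi_{M,N}$ in both $M$ and $N$ follows at once, since the defining formula is given in terms of representatives of $M$ and $N$ and is functorial in each argument separately. The ``main obstacle'' here is purely notational: being careful about the twist by $h\inv$ on the $M$-factor, which is forced by the bimodule relation $(g\otimes m)\cdot b = gb\otimes b\inv m$ above. No deeper input is required.
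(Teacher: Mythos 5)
Your proposal is correct. Note, however, that the paper does not actually prove this proposition: it simply cites \cite[Corollary 2.4.13]{L181}, so there is no in-text argument to compare against. What you have written is, in effect, the standard proof that the citation outsources: you fix the coset representatives $\{(g,1)\}$ of $(G\times G)/\Delta G$, compute the resulting bimodule action $a\cdot(g\otimes m)\cdot b = agb\otimes b^{-1}m$, and write down the explicit balanced map $\phi_{M,N}$ together with its inverse. All the verifications go through: the balancing identity, the bimodule compatibility (both sides of $a\cdot\phi(\cdots)\cdot b$ land on $aghb\otimes b^{-1}(h^{-1}m\otimes n)$ with the diagonal action on $M\otimes_k N$), the identity $\psi\circ\phi=\mathrm{id}$ via sliding $h$ across $\otimes_{kG}$, and naturality, which is immediate from the formula. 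The one point worth making explicit is why defining $\phi$ and $\psi$ on the representatives $g\otimes m$ is legitimate: $k[G\times G]$ is free as a right $k[\Delta G]$-module on the chosen representatives, so these elements (with $m$ ranging over a basis) form a $k$-basis of the induced module and one may extend $k$-bilinearly before checking balancedness. Your explicit construction has the added benefit, which you note, that it passes verbatim to chain complexes, which is how the isomorphism is used in the proof of Theorem \ref{srcinductionthm}.
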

    \begin{proof}
        See \cite[Corollary 2.4.13]{L181}.
    \end{proof}

	\begin{prop}\label{prop:dualidentification}
		The following diagram commutes up to natural isomorphism, where the vertical functors are bimodule identification and $\Ind_{\Delta G}^{G\times G}$ is regarded as usual:
		\begin{figure}[H]
			\centering
			\begin{tikzcd}
			{}_{k\Delta G}\mathbf{mod} \ar[rd, "\Ind^{G\times G}_{\Delta G}"]\\
			& {}_{k[G\times G]}\mathbf{mod} \ar[r, "(-)^*"] \ar[d] & {}_{k[G\times G]}\mathbf{mod} \ar[d] \\
			& {}_{kG}\mathbf{mod}_{kG} \ar[r, "(-)^*"] & {}_{kG}\mathbf{mod}_{kG}
			\end{tikzcd}
		\end{figure}
	\end{prop}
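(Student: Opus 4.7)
The proposition compares two natural ways to form the $k$-dual: one in ${}_{k[G\times G]}\mathbf{mod}$ followed by the bimodule identification, and one using the bimodule identification followed by the bimodule dual $(h\cdot f\cdot g)(m) = f(gmh)$ defined in the introduction. My plan is to compute both compositions in coordinates, pin down their precise discrepancy, and use the image of $\Ind^{G\times G}_{\Delta G}$ (where the top triangle places us) to supply the required natural isomorphism.

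First, for $M \in {}_{k[G\times G]}\mathbf{mod}$, I expect Path A (dualize in ${}_{k[G\times G]}\mathbf{mod}$, then identify) to produce the $(kG,kG)$-bimodule structure $(a\cdot f\cdot b)(m) = f(a^{-1}\cdot m\cdot b^{-1})$ on $\Hom_k(M,k)$, while Path B (identify, then apply the paper's bimodule dual) gives $(a\cdot f\cdot b)(m) = f(b\cdot m\cdot a)$. Re-expressed as $k[G\times G]$-module actions, these send $f$ to $m\mapsto f((a^{-1},b^{-1})m)$ and $m\mapsto f((b^{-1},a^{-1})m)$ respectively; they therefore differ precisely by pullback along the swap automorphism $\sigma\colon G\times G\to G\times G$, $(a,b)\mapsto (b,a)$.

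Next, for $M = \Ind^{G\times G}_{\Delta G}(N)$, I will construct a natural isomorphism $M\xrightarrow{\cong}\sigma^* M$. Since $\sigma$ restricts to the identity on $\Delta G$, the coordinate swap $(g_1,g_2)\otimes n\mapsto (g_2,g_1)\otimes n$ is well-defined on $k[G\times G]\otimes_{k\Delta G}N$ (the tensor relation being symmetric in the two $G$-coordinates), natural in $N$, and intertwines the original $k[G\times G]$-action with its $\sigma$-twist. Precomposition through $\Hom_k(-,k)$ then transports this into the required natural isomorphism between the Path-A and Path-B functors on modules of the form $\Ind^{G\times G}_{\Delta G}(N)$, establishing the claimed commutativity.

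The hard part will be careful bookkeeping of the various dualization conventions and the bimodule identification. Conceptually, the compatibility rests on the fact that modules induced from $\Delta G$ are canonically $\sigma$-invariant, which is why the square commutes only after precomposing with $\Ind^{G\times G}_{\Delta G}$. A cleaner alternative is to invoke Proposition \ref{inddualcommute}: since induction commutes with $k$-duals, both paths produce $\Ind^{G\times G}_{\Delta G}(N^*)$ up to canonical isomorphism, and composing the two resulting canonical isomorphisms yields the natural transformation directly.
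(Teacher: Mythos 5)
Your main argument is correct and is essentially the paper's proof in a more conceptual packaging: the paper's explicit isomorphism $\phi\colon f \mapsto \big((g_1,g_2)\otimes m \mapsto f((g_2,g_1)\otimes m)\big)$ is exactly your precomposition with the coordinate swap $\tau\colon (g_1,g_2)\otimes n \mapsto (g_2,g_1)\otimes n$, and your two computed bimodule structures match the paper's $(\Ind^{G\times G}_{\Delta G}M)^*_1$ and $(\Ind^{G\times G}_{\Delta G}M)^*_2$. One caution: the closing ``cleaner alternative'' via Proposition \ref{inddualcommute} is not self-contained, since that proposition identifies the $k[G\times G]$-module dual of an induced module, whereas the bottom path uses the $(kG,kG)$-bimodule dual, and showing that this too yields $\Ind^{G\times G}_{\Delta G}(N^*)$ is essentially the statement being proved; stick with your explicit swap argument.
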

	\begin{proof}
		We construct an isomorphism $\phi: (\Ind^{G\times G}_{\Delta G} M)^*_1 \to (\Ind^{G\times G}_{\Delta G} M)^*_2$, where $(\Ind^{G\times G}_{\Delta G} M)^*_1$ corresponds to the top right composite, that is, for $a,b \in G$ it has actions defined by:
		\begin{align*}
		a\cdot f\big((g_1,g_2)\otimes m\big) \cdot b &= (a,b\inv) \cdot f\big((g_1,g_2)\otimes m\big)\\
		&= f\big((a\inv, b)(g_1,g_2)\otimes m\big)\\
		&= f\big((a\inv g_1,bg_2)\otimes m\big),
		\end{align*}
		and $(\Ind^{G\times G}_{\Delta G} M)^*_2$ corresponds to the bottom left composite, that is, it has actions defined by:
		\begin{align*}
		a\cdot f\big((g_1,g_2)\otimes m\big) \cdot b &= f\big(b\cdot \big((g_1,g_2)\otimes m\big)\cdot a\big)\\
		&= f\big((bg_1,a\inv g_2)\otimes m\big).
		\end{align*}

        We define $\phi: (\Ind^{G\times G}_{\Delta G} M)^*_1 \cong (\Ind^{G\times G}_{\Delta G}M)^*_2$ as follows:

        \[\phi: f \mapsto \big((g_1,g_2)\otimes m \mapsto f((g_2,g_1)\otimes m)\big)\]

        Above, $(g_2,g_1)\otimes m$ as above is considered an element in $(\Ind^{G\times G}_{\Delta G} M)^*_1$. We first check $\phi$ is well-defined with respect to the tensor product. Let $f \in (\Ind^{G\times G}_{\Delta G} M)^*_1$ and $(g_1,g_2)\otimes m \in \Ind^{G\times G}_{\Delta G}M$, regarded as $(kG,kG)$-bimodule. We have for any $g \in G$ that \[(g_1,g_2)\otimes m = (g_1g\inv, g_2g\inv)\otimes gm.\]
        Then for any $f \in (\Ind^{G\times G}_{\Delta G} M)^*_1$,
        \begin{align*}
            \phi(f)((g_1,g_2)\otimes m)&= \phi(f)((g_1g\inv, g_2g\inv)\otimes gm)\\
            &= f((g_2g\inv, (g_1g\inv)\otimes gm)\\
            &= f((g_2, g_1)\otimes m)\\
            &=  \phi(f)((g_1,g_2)\otimes m)
        \end{align*}
        Thus $\phi$ is well-defined. We next check that it is a $(kG,kG)$-bimodule homomorphism. Let $a,b \in G$.
        \begin{align*}
            \phi(a\cdot f\cdot b)((g_1, g_2)\otimes m) &= \phi (f((a\inv, b)\cdot -))((g_1, g_2)\otimes m)\\
            &= f((a\inv g_2, bg_1)\otimes m)\\
            &= \phi(f)((bg_1, a\inv g_2)\otimes m)\\
            &=(a\cdot \phi(f)\cdot b)((g_1, g_2)\otimes m)
        \end{align*}
        Therefore, $\phi$ is a $(kG,kG)$-bimodule homomorphism. It is clear $\phi$ is bijective, thus it is an isomorphism as desired.

        To see $\phi$ is natural, first note that all morphisms which arise are of the form $(\id \otimes f)^*$ for $f: M\to N$ any left $kG$-module homomorphism. Then, it is straightforward to check the following diagram commutes:
		\begin{figure}[H]
			\centering
			\begin{tikzcd}
			(k[G\times G]\otimes_{k\Delta G} M)^*_1 \ar[r, "\phi_M"] & (k[G\times G]\otimes_{k\Delta G} M)^*_2\\
			(k[G\times G]\otimes_{k\Delta G} N)^*_1 \ar[r, "\phi_N"] \ar[u, "(\id \otimes f)^*"] & (k[G\times G]\otimes_{k\Delta G} N)^*_2 \ar[u, "(\id \otimes f)^*"]
			\end{tikzcd}
		\end{figure}
	\end{proof}

	\begin{proof}[Proof of Theorem \ref{srcinductionthm}]
        Propositions \ref{inddualcommute} and \ref{prop:dualidentification} imply the following diagram is commutative.

		\begin{figure}[H]
			\centering
			\begin{tikzcd}
			{}_{k[\Delta G]}\mathbf{mod} \ar[r, "(-)^*"] \ar[d, "\Ind^{G\times G}_{\Delta G}"] & {}_{k[\Delta G]}\mathbf{mod} \ar[d, "\Ind^{G\times G}_{\Delta G}"]\\
			{}_{k[G\times G]}\mathbf{mod} \ar[r, "(-)^*"] \ar[d, "\cong"] & {}_{k[G\times G]}\mathbf{mod}  \ar[d, "\cong"]\\
			{}_{kG}\mathbf{mod}_{kG} \ar[r, "(-)^*"] & {}_{kG}\mathbf{mod}_{kG}
			\end{tikzcd}
		\end{figure}
		Therefore, $\Ind^{G\times G}_{\Delta G}(C^*) \cong \left(\Ind^{G\times G}_{\Delta G}C\right)^*$, and
		\[kG\cong\Ind^{G\times G}_{\Delta G} (k)\simeq \Ind^{G\times G}_{\Delta G}(C \otimes_{k}C^*) \cong\Ind^{G\times G}_{\Delta G}C\otimes_{kG}\Ind^{G\times G}_{\Delta G}(C^*) \cong \Ind^{G\times G}_{\Delta G}(C)\otimes_{kG}\big(\Ind^{G\times G}_{\Delta G}C\big)^*.\]
		If $C$ is an endotrivial complex, then the components of $C$ are $p$-permutation modules with vertices contained in $\Delta G$, since for any subgroup $H \leq G$, $\Ind^{G\times G}_{\Delta G}\circ \Iso^{\Delta G}_G \circ \Ind^G_H = \Ind^{G\times G}_{\Delta H}\circ \Iso^{\Delta H}_H.$ Therefore, $\Ind^{G\times G}_{\Delta G}C \otimes_{kG}\Ind^{G\times G}_{\Delta G}(C)^* \simeq kG$, so $\Ind^{G\times G}_{\Delta G}(C)$ is a splendid Rickard autoequivalence of $kG$. The second homotopy equivalence follows similarly. The final statement follows from additivity of induction.
	\end{proof}

     \begin{remark}
         In particular, if $C$ is a lift of an orthogonal unit $u \in O(T(kG))$, then $\Ind^{G\times G}_{\Delta G} C$ is a splendid lift of the $p$-permutation autoequivalence $\Ind^{G\times G}_{\Delta G} u \in O(T^\Delta(kG, kG))$. Moreover, $\Ind^{G\times G}_{\Delta G}$ reflects isomorphisms (which we show in the following lemma), implying each unique (up to isomorphism) endotrivial complex defines an corresponding unique (up to isomorphism) splendid autoequivalence. Therefore, we obtain an injective group homomorphism $\calE_k(G) \to \calS_k(kG)$.
     \end{remark}

    \begin{lemma}
    	Let $M$ be a $kG$-module. Then we have a natural isomorphism $(\Ind^{G\times G}_{\Delta G} M)^{1\times G} \cong M$, where we identify $(\Ind^{G\times G}_{\Delta G} M)^{1\times G}$ as a $kG$-module by the isomorphism $(G\times G)/(1\times G)\cong G$. In particular, if $C_1,C_2$ are chain complexes of $kG$-modules satisfying $\Ind^{G\times G}_{\Delta G}C_1 \cong \Ind^{G\times G}_{\Delta G} C_2,$ then $C_1\cong C_2$.
    \end{lemma}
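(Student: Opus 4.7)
The plan is to exhibit an explicit natural $kG$-module isomorphism $\Phi_M : M \xrightarrow{\sim} (\Ind_{\Delta G}^{G\times G} M)^{1\times G}$ and then apply it componentwise to the chain complex isomorphism to deduce the second assertion. I identify $(G\times G)/(1\times G)$ with $G$ via $(g,h)(1\times G)\mapsto g$, so the residual $G$-action on any $(1\times G)$-fixed point subspace of a $k[G\times G]$-module arises from left multiplication by $(a,1)$.

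First, I would fix the set $\{(g,1) : g\in G\}$ as a system of representatives for $(G\times G)/\Delta G$, so every element of $\Ind_{\Delta G}^{G\times G} M$ is uniquely a sum $\sum_{g\in G}(g,1)\otimes m_g$ with $m_g\in M$. A direct computation using $(g,h)=(gh^{-1},1)(h,h)$ shows that such an element is $(1\times G)$-fixed if and only if $m_{gh}=h^{-1}m_g$ for all $g,h\in G$, equivalently $m_g=g^{-1}m_1$. This motivates the definition
\[
\Phi_M : M \to \bigl(\Ind_{\Delta G}^{G\times G} M\bigr)^{1\times G}, \qquad m \mapsto \sum_{g\in G}(g,1)\otimes g^{-1}m.
\]
The surjectivity follows immediately from the above parametrization of the fixed points, and the injectivity follows from the fact that $\Ind_{\Delta G}^{G\times G}M=\bigoplus_{g\in G}(g,1)\otimes M$ as a $k$-vector space.

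Next, I would verify that $\Phi_M$ is $kG$-linear. For $a\in G$, under the identification $(G\times G)/(1\times G)\cong G$ the element $a$ acts via $(a,1)$, and
\[
(a,1)\cdot\sum_{g\in G}(g,1)\otimes g^{-1}m=\sum_{g\in G}(ag,1)\otimes g^{-1}m=\sum_{g\in G}(g,1)\otimes g^{-1}(am),
\]
which is exactly $\Phi_M(am)$. Naturality in $M$ is straightforward: any $kG$-homomorphism $f:M\to N$ induces $\id\otimes f$ on inductions, and both sides of the naturality square send $m$ to $\sum_g(g,1)\otimes g^{-1}f(m)$.

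For the second assertion, I would note that the $(1\times G)$-fixed point construction is an additive functor from chain complexes of $k[G\times G]$-modules to chain complexes of $kG$-modules, and naturality of $\Phi_{(-)}$ upgrades it to a natural isomorphism of chain complex functors $(-)\cong \bigl(\Ind_{\Delta G}^{G\times G}(-)\bigr)^{1\times G}$ on $Ch({}_{kG}\mathbf{mod})$. Applying the fixed point functor to any isomorphism $\Ind_{\Delta G}^{G\times G}C_1\cong \Ind_{\Delta G}^{G\times G}C_2$ and composing with $\Phi_{C_1}, \Phi_{C_2}^{-1}$ therefore yields the desired isomorphism $C_1\cong C_2$. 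The only mild care needed is bookkeeping of the three $G$-actions involved (left, right, and diagonal), which is why I pin down the coset representatives $(g,1)$ at the outset; no step is genuinely an obstacle.
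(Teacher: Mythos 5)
Your proposal is correct and follows essentially the same route as the paper: both identify the $(1\times G)$-fixed points of $\Ind^{G\times G}_{\Delta G}M$ explicitly and exhibit an equivariant bijection with $M$ (your $\Phi_M(m)=\sum_{g}(g,1)\otimes g^{-1}m$ is precisely the inverse of the paper's map $\sum_{g'}(g,g')\otimes m\mapsto gm$, after rewriting in your coset representatives), then deduce the chain-complex statement by naturality. Your write-up is if anything more careful about the fixed-point computation and the three $G$-actions involved.
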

    \begin{proof}
    	Observe every element of $(\Ind^{G\times G}_{\Delta G} M)^{1\times G}$ can be expressed as an $k$-linear combination of elements of the form \[\left(g, \sum_{g' \in G}g'\right) \otimes m, \quad g\in G,\text{ and } m \in M.\] Then, it is straightforward to verify that the homomorphism $\phi: (\Ind^{G\times G}_{\Delta G} M)^{1\times G} \to M$ induced by \[\left(g, \sum_{g' \in G}g'\right) \otimes m \mapsto gm\] is a well-defined natural isomorphism.
    \end{proof}

	\section{The faithful constituent of $\calE_k(G)$} \label{faithfulsection}

	Bouc introduced a theory of ``biset functors,'' an abstraction of constructions which have an abelian group associated to to every finite group, and corresponding induction, restriction, transfer, inflation, and deflation maps between groups. This can be viewed as an extension of global Mackey functors. This approach has led to a number of results, such as describing the Dade group $D(P)$ of a finite $p$-group $P$, which parameterizes the ``capped'' endotrivial $kP$-modules. $D(P)$ is connected to the study of endotrivial modules, as there is an embedding $\calT(P) \hookrightarrow D(P)$ given by sending an equivalence class $[M] \in \calT(P)$ to its corresponding equivalence class $[M] \in D(P)$. We refer the reader to Bouc's text on biset functors \cite[Chapter 12]{Bou10} for further details.

	Following Bouc, we define the notion of a faithful endotrivial complex which one may think of as an endotrivial complex which contains no parts which are inflated from a proper quotient group. The set of all of these forms the faithful subgroup of $\calE_k(G)$. For ease of notation, we drop bracket notation when referring to elements of $\calE_k(G)$. The following definition is adapted from \cite[Chapter 6]{Bou10} with a few modifications. Let  $s_p^\triangleleft(G)$ denote the set of all normal $p$-subgroups of $G$.

	\begin{definition}\label{faithfuldef}
		Let $P \in s_p^\triangleleft(G)$. Then $\Inf^G_{G/P}$ induces an injective group homomorphism $\calE_k(G/P) \to \calE_k(G)$. This homomorphism is split injective with retraction given by $\Def^G_{G/P} := (-)(P)$. Define the \textit{faithful component} of $\calE_k(G)$, $\partial \calE_k(G)$ as follows.
		\[\partial \calE_k(G) := \bigcap_{1 < P \in s_p^\triangleleft(G)} \ker \Def^G_{G/P}.\]

		For example, when $G$ has only normal $p$-subgroups, $\partial \calE_k(G)$ consists of endotrivial complexes whose h-marks are all zero and local homology is trivial, except possibly at the trivial subgroup. In that case, it follows that $\partial\calE_k(G)$ has $\Z$-rank at most 1, corresponding to the h-mark at the trivial subgroup. Call any $C \in \partial \calE_k(G)$ a \textit{faithful} endotrivial complex.
	\end{definition}

	\begin{remark}
		$\calE_k$ may be regarded as a ``partial'' biset functor, in that the biset operations which are permitted are inflation, isomorphism, and restriction, and deflation only for normal $p$-subgroups of a fixed finite group. In this way, the faithful component defined here is analogous to the faithful component of a biset functor as defined in \cite{Bou10}.  However, there is no known induction - usual induction of chain complexes is not multiplicative, and tensor induction of chain complexes (see \cite{E61} for details) does not in general preserve endotriviality.

		For example, let the Klein-4 group $V_4 \cong C_2 \times C_2$ have generators $\sigma, \tau \in V_4$, and $C$ be the endotrivial complex of $k\langle\sigma\rangle $-modules given by $k\langle \sigma \rangle \to k$ with the differential the augmentation homomorphism $k$ in degree zero. Let $D = \TI^{V_4}_{\langle\sigma\rangle} C$. Then, it is routine to show that \[ D_2 \cong  k[V_4/\langle \tau\rangle] \oplus k[V_4/\langle \sigma\tau\rangle], D_1 \cong kV_4, D_0 \cong k.\] $D$ cannot be an endotrivial complex, since \[\big(D(\langle \tau\rangle)\big)_2 \cong k[V_4/\langle\tau\rangle], \big(D(\langle \tau\rangle)\big)_1 \cong 0, \big(D(\langle \tau\rangle)\big)_0\cong k\] which cannot possibly have homology concentrated in one degree.

		However, we may view $\calE_k$ as a biset functor after restricting to the subcategory $\Z$-linearly generated by all restriction, inflation, and transfer bisets, and deflation bisets only of the form $\Def^G_{G/P}$ for normal $p$-subgroups $P \in s_p^\triangleleft(G)$.
	\end{remark}

	\begin{example}
		The following examples come from Proposition \ref{prk1gen}. If $G = C_{p^n}$ with $p > 2$ or $p = 2$ and $n > 1$, then $\partial\calE_k(G)$ is generated by the endotrivial complex generated by truncating the period 2 free resolution of $k$. Say $G = \langle\sigma\rangle$, then the endotrivial complex $C$ is as follows: \[C = \big(kG \xrightarrow{d_2} kG\xrightarrow{d_1} k, \quad d_2: \sigma \mapsto \sigma - 1, \quad d_1: \sigma \mapsto 1\big).\] Indeed, $h(C(1)) = 2$ and $C(P) \cong k$ for any $1 < P \in s_p(G)$.

		If $p=2$ and $G = C_2$, then $\partial\calE_k(G)$ is generated by the endotrivial complex generated by truncating the period 1 free resolution of $k$, \[C = \big(kC_2 \to k,\quad \sigma\mapsto 1\big).\] Finally if $G$ is any group of order prime to $p$ or does not contain any nontrivial normal $p$-subgroups, then vacuously, $\partial \calE_k(G) = \calE_k(G)$.
	\end{example}

	Computing faithful endotrivial complexes which generate the faithful constituent will be the main focus of Section \ref{section5}, since as the next proposition will imply, determining $\partial\calE_k(G)$ is, assuming an inductive hypothesis, the only necessary information to completely determine $\calE_k(G)$. The next theorem and proof are adapted from \cite[Proposition 6.3.3]{Bou10}, but reformulated to be presentable in a self-contained manner.

	\begin{theorem}{\cite[Proposition 6.3.3]{Bou10}}\label{faithfuldecomp}
		Let $G$ be any group. Define the following group homomorphism:
		\begin{align*}
		\Phi: \calE_k(G) &\to \prod_{P \in s_p^\triangleleft(G)}\calE_k(G/P)\\
		C &\mapsto  \left(\sum_{P\leq Q \in s_p^\triangleleft(G)}\mu(P,Q)\cdot \left(\Inf^{G/P}_{G/Q}C(Q)\right)\right)_{P \in s^\triangleleft_p(G)}
		\end{align*}
		Here, $\mu$ is the M\"obius function associated to the poset of normal $p$-subgroups of $G$ $s_p^\triangleleft (G)$.
		\begin{enumerate}
			\item The image of $\Phi$ is contained in $\prod_{P\in s_p^\triangleleft(G)}\partial\calE_k(G/P)$.
			\item $\Phi: \calE_k(G) \to \prod_{P \in s_p^\triangleleft(G)}\partial\calE_k(G/P)$ is an isomorphism of groups, with inverse given by
			\begin{align*}
			\Psi: \prod_{P\in s_p^\triangleleft(G)}\partial\calE_k(G/P) &\to \calE_k(G) \\
			(C_P)_{P\in s_p^\triangleleft(G)} & \mapsto \sum_{P\in s_p^\triangleleft(G)}\Inf^G_{G/P} C_P
			\end{align*}
		\end{enumerate}
	\end{theorem}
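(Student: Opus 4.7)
The plan is to verify both parts by Möbius inversion in the lattice $s_p^{\triangleleft}(G)$, using a single inflation--Brauer commutation formula as the only non-combinatorial ingredient. Specializing the natural isomorphism already recorded in the proof of the preceding proposition to a pair of normal $p$-subgroups $P \leq Q$ of $G$ and a normal $p$-subgroup $R'/P$ of $G/P$, one obtains
\[
(\Inf^{G/P}_{G/Q} M)(R'/P) \cong \Inf^{G/R'}_{G/R'Q}\bigl(M(R'Q/Q)\bigr)
\]
for any $M \in {}_{k[G/Q]}\catmod$, hence for complexes. Combined with Proposition~\ref{brauerppermproperties}(d), which gives $C(Q)(R'Q/Q) \cong C(R'Q)$, this yields the key identity $\Def^{G/P}_{G/R'}\Inf^{G/P}_{G/Q} C(Q) \cong \Inf^{G/R'}_{G/R'Q} C(R'Q)$ that will drive all three computations.

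The easier direction of (b), $\Psi \circ \Phi = \id$, would fall out immediately from transitivity of inflation and the Möbius identity $\sum_{P \in [1,Q]}\mu(P,Q) = \delta_{1,Q}$: expanding definitions and swapping sums,
\[
\Psi(\Phi(C)) = \sum_{Q \in s_p^{\triangleleft}(G)} \Inf^G_{G/Q} C(Q)\cdot\sum_{1 \leq P \leq Q}\mu(P,Q) = \Inf^G_{G} C(1) = C.
\]
For $\Phi \circ \Psi = \id$, I would exploit faithfulness: each $C_R$ satisfies $C_R(R''/R) = k[0]$ for all $R < R''$, so the key identity forces $(\Inf^G_{G/R} C_R)(Q)$ to be the identity $k[0]$ of $\calE_k(G/Q)$ unless $Q \leq R$, in which case it equals $\Inf^{G/Q}_{G/R} C_R$. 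Interchanging sums collapses $\Phi(\Psi((C_R)))_P$ to $C_P$ via $\sum_{Q \in [P, R]}\mu(P,Q) = \delta_{P,R}$.

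For part (a), take $P < R'$ in $s_p^{\triangleleft}(G)$ and compute via the key identity, grouping terms by $Q^{\ast} = R'Q$:
\[
\Phi(C)_P(R'/P) = \sum_{Q^{\ast} \geq R'}\Inf^{G/R'}_{G/Q^{\ast}} C(Q^{\ast})\cdot g(Q^{\ast}),\qquad g(Q^{\ast}) := \sum_{\substack{P \leq Q\\ R'Q = Q^{\ast}}}\mu(P,Q).
\]
The combinatorial heart of the argument is then to show $g \equiv 0$ on $[R', \infty)$. Summing $g$ over $\tilde Q^{\ast} \in [R', Q^{\ast}]$ yields $\sum_{P \leq Q \leq Q^{\ast}}\mu(P,Q)$ (because $R' \leq Q^{\ast}$ makes the condition $Q \vee R' \leq Q^{\ast}$ equivalent to $Q \leq Q^{\ast}$), which vanishes since $P < R' \leq Q^{\ast}$; Möbius inversion on the interval $[R', Q^{\ast}]$ then forces $g \equiv 0$.

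Conceptually the Möbius arguments are routine, so I expect the main obstacle to be the inflation--Brauer commutation formula itself, which requires careful tracking of normalizer quotients through the successive inflations from $G/R'Q$ to $G/R'$ to $G/P$. Once that identity is established cleanly, the three computations above proceed in parallel and produce both claims at once.
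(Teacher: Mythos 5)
Your proposal is correct, and two of its three computations coincide with the paper's: your part (a) is exactly the paper's argument in different notation (your coefficient function $g(Q^\ast)$ is the paper's $s_X$, and your ``sum the summation function over $[R',Q^\ast]$ and invert'' is the paper's induction up the poset), and your verification of $\Psi\circ\Phi=\id$ is the same M\"obius inversion the paper uses. Where you genuinely diverge is the remaining half of (b): the paper, having shown $\Psi\circ\Phi=\id$, reduces to proving $\Psi$ injective and does so by contradiction, extracting a locally maximal $X\in s_p^\triangleleft(G)$ with $C_X$ nontrivial and arguing via h-marks (with a separate case for when $C_X$ is a nontrivial one-dimensional character with all h-marks zero). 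You instead compute $\Phi\circ\Psi=\id$ directly: the key identity $(\Inf^G_{G/R}C_R)(Q)\cong \Inf^{G/Q}_{G/QR}\bigl(C_R(QR/R)\bigr)$ together with faithfulness of $C_R$ kills every term with $Q\not\leq R$, and the surviving double sum collapses by $\sum_{P\leq Q\leq R}\mu(P,Q)=\delta_{P,R}$. This buys you a cleaner, fully symmetric proof that avoids the paper's somewhat delicate maximality/case analysis, at the cost of having to state the inflation--Brauer commutation formula explicitly --- which, as you note, is already implicit in the paper's reindexing step for part (a) and in the proof of the preceding proposition, so nothing new is actually needed.
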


	\begin{proof}
		Set $\Phi_S$ to be the component of $\Phi$ at $S$, that is, $\Phi = (\Phi_S)_{S\in s_p^\triangleleft(G)}$. To show (a) it suffices to show that $\big(\im \Phi_S\big)(P) = k$ when $P > S$. Fix an endotrivial complex of $kG$-modules $C$. We first exhibit a clever reindexing: for fixed $S$ and $P \geq S$ with $P, S \in s_p^\triangleleft(G)$,

		\begin{align*}
		\left(\sum_{S\leq Q \in s_p^\triangleleft(G)} \mu(S, Q)\cdot \big(\Inf^{G/S}_{G/Q}C(Q)\big)\right)(P) &= \sum_{Q \in s_p^\triangleleft(G)}  \mu(S, Q)\big(\Inf^G_{G/Q}C(Q)\big)(P)\\
		&= \sum_{S\leq Q \in s_p^\triangleleft(G)} \mu(S, Q)\cdot \left( \Inf^{G/P}_{G/PQ} C(PQ)\right)\\
		&= \sum_{SP\leq X \in s_p^\triangleleft(G)}\left(\sum_{S\leq Q \in s_p^\triangleleft(G), X = PQ} \mu(S, Q)\right) \cdot \left( \Inf^{G/P}_{G/X} C(X)\right)\\
		\end{align*}

		Set $s_X = \sum_{S\leq Q\in s_p^\triangleleft(G), PQ = X} \mu(S, Q)$, then it suffices to show $s_X = 0$ unless $PS = S$. If $PS \neq S$, then \[s_{PS} = \sum_{Q \in s_p^\triangleleft(G), S \leq Q \leq PS} \mu(S,Q) = 0.\] Then, for $PS \leq Y \in s_p^\triangleleft(G)$, \[\sum_{X \in s_p^\triangleleft(G), PS \leq X \leq Y }s_X = \sum_{X \in s_p^\triangleleft(G), PS \leq X \leq Y } \sum_{S\leq Q \in s_p^\triangleleft(G), PQ = X} \mu(S, Q) = \sum_{Q\in s_p^\triangleleft(G), S \leq Q \leq Y} \mu(S, Q) = 0,\] and inducting on the poset $s_P^\triangleleft(G)$ allows us to conclude $s_Y = 0$. Thus, the exponent is zero unless $ P = S$, and we conclude $\im \Phi_P \subseteq \partial \calE_k(G/P)$.

		For (b), by M\"{o}bius inversion, it follows that for any endotrivial complex of $kG$-modules $C$, \[\sum_{P, Q\in s_p^\triangleleft(G), P \leq Q}\Inf^{G/P}_{G/Q}\Phi_Q(C) = C(P).\] In particular, $\sum_{P\in s_p^\triangleleft(G)} \inf^G_{G/P} \Phi_P(C) = C$, which demonstrates $\Psi \circ\Phi = \id_{\calE_k(G)}$. To show that $\Phi$ and $\Psi$ are inverses, it suffices to show that $\Psi$ is injective.

		Suppose for contradiction that $\ker \Psi \neq (k)_{P\in s_p(G)}$, and say nontrivial $(C_P)_{P\in s_p^\triangleleft(G)} \in \ker\Psi$. Then the product of all $\Inf^G_{G/P} C_P$ is the trivial complex. There must exist a locally maximal $X \in s_p^\triangleleft(G)$ with respect to the property that $C_X$ is a nontrivial faithful endotrivial complex. First, assume $X$ is not a maximal element of $s_p^\triangleleft(G)$. Then, $C_X$ has unique highest nonzero h-mark at a subgroup $X' \geq X$ with respect to subgroup order, where $X'$ does not contain as subgroup any normal subgroups containing $X$ besides $X$ itself. But then, it is impossible that $h\left(\left(\sum_{P\in s_p^\triangleleft(G)}\inf^G_{G/P} C_P\right)(X)\right) = 0$, since for all other $X < Y \in s_p^\triangleleft(G)$, $h(C_Y(X)) = 0$ by maximality of $X$, and for all other $X \not\leq Y \in s_p^\triangleleft(G)$, $h(C_Y(X)) = 0$ since $C_Y$ is a faithful endotrivial $k[G/Y]$-complex.

		Otherwise, if $X$ is maximal, it might also be the case that $C_X$ has h-marks 0 everywhere, in which case $C_X = k_\omega$ for some nontrivial $\omega \in \Hom(G/X, k^\times)$. In this case, $X$ is a global maximum by basic group-theoretic arguments. It follows that $C_X$ is the only chain complex in the tuple $(C_P)$ for which $(C_P)(X) \neq k$, by faithfulness. Therefore \[\sum_{P \in s_p^\triangleleft(G)} \Inf^{G}_{G/P} (C_P) (X) = \left(\sum_{P \in s_p^\triangleleft(G)} \Inf^{G}_{G/P} C_P\right)(X) \neq k.\]
		Thus $(C_P)$ cannot exist, and $\ker \Psi = (k)_{P\leq G}$, as desired.
	\end{proof}

	\begin{remark}
		Since $\calE_k(G)$ decomposes into a direct product of faithful components, to completely determine the structure of $\calE_k(G)$, it suffices to determine $\partial\calE_k(G)$, assuming we have already determined $\calE_k(G)$ for all groups of smaller order. One may ask what other restrictions can be placed upon elements of $\partial\calE_k(G)$.

		We will show that if $[C] \in\partial\calE_k(G)$, then there is a representative $C$ for with all of its components ``faithful'' as well, in that after applying the Brauer construction at any nontrivial normal subgroup, each of its components vanish with the exception of a unique module of $k$-dimension one in one degree. Explicitly, if $\calX$ is the set of $p$-subgroups of $G$ which do not contain a nontrivial normal subgroup of $G$, then there exists some $i \in \Z$ such that for all $j \neq i$, $C_j$ is $\calX$-projective, and $C_i$ contains one indecomposable summand which has vertex $S \in \Syl_p(G)$, and all other summands have vertex contained in $\calX$. To do this, we first prove a generalization of \cite[Theorem 7.9]{Bou98}.
	\end{remark}

	\begin{theorem}\label{boucthmgeneralization}
		Let $C$ be a bounded chain complex of $p$-permutation $kG$-modules, and let $\mathcal{X}$ be a nonempty subset of $s_p(G)$ which is closed under $G$-conjugation and taking subgroups. The following are equivalent:
		\begin{enumerate}
			\item For all $P \not\in\mathcal{X}$, $C(P)$ is acyclic.
			\item There exists a chain complex $D$ with $C\simeq D$ such that for all $i \in \Z$, $D_i$ is $\calX$-projective.
		\end{enumerate}
		In particular, if $\mathcal{X} = \{1\}$, we obtain \cite[Theorem 7.9]{Bou98}.
	\end{theorem}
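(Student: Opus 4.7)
The implication (b) $\Rightarrow$ (a) is immediate. If every $D_i$ is $\calX$-projective, each indecomposable summand of $D_i$ has vertex in $\calX$ (using closure of $\calX$ under subgroups and $G$-conjugation), so for $P \notin \calX$, $D_i(P) = 0$ by Proposition \ref{brauerppermproperties}(c), and $C(P) \simeq D(P) = 0$.

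For the converse, I would induct on the total number of indecomposable summands of the modules $C_i$ (summed over $i \in \Z$) whose vertex is not in $\calX$. The base case of zero such summands is trivial: take $D = C$. For the inductive step, choose a $p$-subgroup $P \notin \calX$ which is maximal, up to $G$-conjugacy, among the vertices of indecomposable summands of $C$ that lie outside $\calX$. Closure of $\calX$ under subgroups together with maximality of $P$ rules out any summand of $C$ having vertex strictly containing a conjugate of $P$, so for every $j$ we have $C_j(P) = C_j^{(P)}(P)$, where $C_j^{(P)} \subseteq C_j$ is a chosen direct summand gathering all indecomposable constituents with vertex $=_G P$. Since vertex-$P$ trivial source modules are summands of $k[G/P]$, their Brauer quotients at $P$ are projective $k[N_G(P)/P]$-modules by Proposition \ref{brauerppermproperties}(b). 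Hence $C(P)$ is a bounded acyclic complex of projective $k[N_G(P)/P]$-modules, and is therefore contractible.

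The central step is to peel a contractible summand off $C$ in the degrees where $C(P)$ is maximally supported. Let $i$ be maximal with $C_i^{(P)} \neq 0$. Contractibility of $C(P)$ together with $C_{i+1}(P) = 0$ force $d_i^{C(P)}$ to be a split monomorphism, so by Theorem \ref{ppermvertex} the map $f_i := d_i|_{C_i^{(P)}} : C_i^{(P)} \to C_{i-1}$ is split injective; write $C_{i-1} = f_i(C_i^{(P)}) \oplus Y$. I would then establish a direct sum decomposition $C \cong E \oplus C'$ of chain complexes, where $E$ is the contractible complex $C_i^{(P)} \xrightarrow{\id} C_i^{(P)}$ concentrated in degrees $i, i-1$ and $C'$ agrees with $C$ outside those two degrees. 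The subtle point is that the off-diagonal component $\beta : C_i^{(\neg P)} \to C_i^{(P)}$ of $d_i$ (relative to $C_i = C_i^{(P)} \oplus C_i^{(\neg P)}$ and $C_{i-1} = f_i(C_i^{(P)}) \oplus Y$) obstructs a naive splitting; one replaces $C_i^{(\neg P)}$ by the graph-type submodule $\{(-\beta(v), v) : v \in C_i^{(\neg P)}\}$, which is $kG$-isomorphic to $C_i^{(\neg P)}$ and so carries the same indecomposable summand data. The relation $d_i d_{i+1} = 0$ then forces the $C_i^{(P)}$-component of $d_{i+1}$ to vanish in the new coordinates, producing the claimed splitting.

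Finally, $C'$ has strictly fewer indecomposable summands with vertex outside $\calX$, and inherits the Brauer-acyclicity hypothesis, since $E(Q)$ is either zero (when $Q \not\leq_G P$) or contractible (when $Q \leq_G P$). The inductive hypothesis applied to $C'$ yields $C' \simeq D$ with $D$ having only $\calX$-projective components, whence $C \simeq E \oplus D \simeq D$. I anticipate the main obstacle to be the change-of-basis argument of the third paragraph: exhibiting $E$ as a bona fide chain-complex direct summand, rather than merely a subcomplex, relies on the graph-submodule substitution and on using $d^2 = 0$ at precisely the right juncture to kill the offending off-diagonal terms in adjacent differentials.
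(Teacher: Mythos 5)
Your argument is correct, but it follows a genuinely different route from the paper's. The paper inducts on the \emph{length} of $C$ and works from the bottom: it splits $C_n = X_n \oplus Y_n$ into $\calX$-projective and non-$\calX$-projective parts, augments $C$ to a complex $D$ by adjoining $X_n$ in degree $n+1$ with an identity component in the differential, uses the surjectivity criterion at vertices to split off the contractible piece $C_n \xrightarrow{\cong} C_n$, and then reassembles $C$ as (a shift of) the mapping cone of $D' \to X_n[n]$, to which the inductive hypothesis applies. You instead induct on the number of indecomposable summands with vertex outside $\calX$, select a maximal such vertex $P$, and perform Gaussian elimination at the top degree $i$ of the support of the vertex-$P$ summands; the key observations — that $C_j(P) = C_j^{(P)}(P)$ by maximality of $P$ and closure of $\calX$, that acyclicity of $C(P)$ in top degree forces $d_i(P)$ to be injective and hence $d_i|_{C_i^{(P)}}$ to be split injective, and that the graph-submodule change of coordinates together with $d^2 = 0$ on both adjacent differentials exhibits $C_i^{(P)} \xrightarrow{\id} C_i^{(P)}$ as a chain-complex direct summand — are all sound. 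Your approach buys a cleaner bookkeeping (no augmentation, no mapping cone) and makes transparent exactly which summands are being eliminated; the paper's approach buys a shorter write-up by outsourcing the splitting to the mapping-cone formalism. Two small points to tighten: the split-injectivity step needs the direct-sum refinement of Theorem \ref{ppermvertex} (the dual of the Lemma the paper proves for the surjective case), since $C_i^{(P)}$ is generally decomposable; and your appeal to contractibility of $C(P)$ as a complex of projectives is more than you need — plain acyclicity in the top degree of its support already gives injectivity of $d_i(P)$, which is all that Theorem \ref{ppermvertex} requires.
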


	To prove this, we first state a lemma which refines Theorem \ref{ppermvertex}.

	\begin{lemma}
		Let $M,N_1,\dots, N_l$ be $p$-permutation $kG$-modules, with each $N_i$ indecomposable with vertex $P_i$, and let $f: M\to N_1 \oplus\cdots \oplus N_l$ be a $kG$-module homomorphism. $f$ is split surjective if and only if the $kN_G(P_i)/P_i$-module homomorphism $f(P_i)$ is surjective for all $i \in \{1,\dots, l\}$.
	\end{lemma}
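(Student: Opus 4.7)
The forward direction will be immediate: if $f$ is split surjective, then applying the additive functor $-(P_i)$ gives a split surjection $f(P_i)$, which is in particular surjective. So the real content is the reverse direction, and I plan to establish it by induction on $l$, with base case $l=1$ handed to us by the dual half of Theorem \ref{ppermvertex}.

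For the inductive step, suppose the surjectivity hypotheses hold. Write $f=(f_1,\ldots,f_l)$ with $f_j: M\to N_j$. Since $f(P_l)$ is surjective onto $\bigoplus_j N_j(P_l)$, composing with the projection $\pi_l$ onto the $N_l(P_l)$-summand shows $f_l(P_l)$ is surjective; by Theorem \ref{ppermvertex} (the $l=1$ case), $f_l$ is split surjective. Fix a splitting $s_l: N_l\to M$, so that $M\cong \ker f_l \oplus s_l(N_l)$ as $kG$-modules. In particular $\ker f_l$ is a $p$-permutation module. Let $g: \ker f_l \to N_1\oplus\cdots\oplus N_{l-1}$ be the restriction of the first $l-1$ components of $f$.

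The key check is that $g(P_i)$ is surjective for each $i<l$. Because $\ker f_l$ is a direct summand of $M$, applying $-(P_i)$ to the split short exact sequence $0\to \ker f_l\to M\to N_l\to 0$ yields a split short exact sequence, so $(\ker f_l)(P_i)=\ker\bigl(f_l(P_i)\bigr)$ as a summand of $M(P_i)$. Given $(n_1,\ldots,n_{l-1})\in\bigoplus_{j<l}N_j(P_i)$, surjectivity of $f(P_i)$ produces some $m\in M(P_i)$ with $f(P_i)(m)=(n_1,\ldots,n_{l-1},0)$; then $m\in(\ker f_l)(P_i)$ and $g(P_i)(m)=(n_1,\ldots,n_{l-1})$. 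The inductive hypothesis applied to $g$ (with vertex data $P_1,\ldots,P_{l-1}$) now yields a splitting $s:N_1\oplus\cdots\oplus N_{l-1}\to \ker f_l$.

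Finally, I will assemble a global splitting by a standard correction-term construction. Setting
\[
s'(n_1,\ldots,n_l) \;=\; s\bigl(n_1-f_1(s_l(n_l)),\,\ldots,\,n_{l-1}-f_{l-1}(s_l(n_l))\bigr) \;+\; s_l(n_l),
\]
a direct computation shows $f\circ s'=\id$, so $f$ is split surjective. The only step I expect to require genuine care is the identification $(\ker f_l)(P_i)=\ker(f_l(P_i))$ for $i<l$, but this follows cleanly from $f_l$ being split (so the short exact sequence is split in $Ch^b({}_{kG}\triv)$ and the Brauer construction preserves it); the rest is bookkeeping and a routine application of the base case and the induction hypothesis.
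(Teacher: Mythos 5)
Your proof is correct and follows essentially the same strategy as the paper's: induct on $l$, use the base case (Theorem \ref{ppermvertex}) to split off one indecomposable summand, verify that the restriction to a complement still satisfies the Brauer-surjectivity hypotheses, and apply the inductive hypothesis. Your version is somewhat more explicit where the paper simply says ``after a possible change of complement'' --- in particular the identification $(\ker f_l)(P_i)=\ker(f_l(P_i))$ via split exactness and the explicit correction-term splitting are exactly the details being elided there.
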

	\begin{proof}
		The forward direction is trivial. We proceed by induction on $l$. Note the base case $l = 1$ is Theorem \ref{ppermvertex}.

		Suppose $f(P_i)$ is surjective for all $i \in \{1,\dots, l\}$. Since $f(P_1)$ is surjective, Theorem \ref{ppermvertex} implies there exists a direct summand $M_1$ of $M$ isomorphic to $N_1$ such that $f(M_1) = N_1$ and $f|_{M_1}$ is an isomorphism. Let $M = M_1 \oplus  M'$, where $M' = \ker f|_{M_1}$. After a possible change of complement of $N_1$, it follows that $f|_{M'}(P_2)$ surjects onto $N_2(P_2) \oplus \cdots \oplus N_l(P_2)$, since $\im f(P_2) = (N_1 \oplus \cdots \oplus N_l)(P_2)$, and the inductive hypothesis completes the proof.
	\end{proof}

	\begin{proof}[Proof of Theorem \ref{boucthmgeneralization}]
	    (b) implies (a) is straightforward. We show (a) implies (b). Let $n$ be the minimum integer for which $C_n \neq 0$. For each nonzero $C_n$, write $C_n = X_n \oplus Y_n$, where $X_n$ consists of all indecomposable summands with vertex contained in $\mathcal{X}$, so $Y_n$ consists of all indecomposable summands with vertex not contained in $\mathcal{X}$. We construct a chain complex $D$ identical to $C$, except in degree $n+1$, where we set $D_{n+1} = X_n \oplus C_{n+1}$, and add the identity map on $X_n$  to the differential of $D_{n+1}$, $d_{n+1}$. Since $C(Q)$ is acyclic for all $Q \not \in \calX$ by assumption, so is $D(Q)$, as $X_n(Q) = 0$ by Proposition \ref{brauerppermproperties}. Therefore $d_{n+1}$ composed with projection onto $Y_n$ is split surjective by the previous lemma, and by construction, it follows that $d_{n+1}$ is split surjective. Thus, we have that $D \cong D' \oplus (C_n \xrightarrow{\cong} C_n)$, where $D'$ has lowest nonzero degree $n+1$.

		Note that $C$ is homotopy equivalent to a shift of the mapping cone of the chain map $D' \to X_n$. Moreover, if $Q \not\in \calX$, $C(Q) \cong D(Q) \simeq D'(Q)$, and these complexes are by assumption acyclic. We now perform an inductive argument as follows. If the length of $C$ is one, then (b) holds, since there is only one nonzero term which must vanish after applying the Brauer construction at all $P \not\in \calX$, and the rest follows by Proposition \ref{brauerppermproperties} (c).

        Now we assume (a) implies (b) holds for complexes of length at most $n$, and suppose $C$ has length $n+1$. If the length of $C$ is $n+1$, then $D'$ has length $n$. Since $D'(P)$ is also acyclic for all $P \not\in \calX$, $D'$ contains only $\calX$-projective modules. Since $C$ is homotopy equivalent to a shift of the mapping cone $D' \to X_n[n]$, and both $D'$ and $X_n[n]$ are homotopy equivalent to complexes with only $\calX$-projective modules, we conclude $C$ is also homotopy equivalent to a complex with only $\calX$-projective modules.
	\end{proof}

     \begin{lemma}\label{relprojectivitylimitingthm}
        Let $C$ be a bounded complex of $p$-permutation $kG$-modules with the following property: there exists a (possibly empty) sub-poset $\calY \subset s_p(G)$ which does not contain Sylow subgroups, is closed under conjugation and taking subgroups, and for which there exists some $i \in \Z$ such that for all $P \not\in \calY$, $\dim_k H_i(C(P)) = 1$ and $C(P)$ is exact in all other degrees. $C$ is homotopy equivalent to a complex $C'$ with the following property: there exists an $i \in \Z$ such that $C'_i = M \oplus N$, with $M$ a $\calY$-projective module or 0 and $N$ is an indecomposable $kG$-module with vertex set $\Syl_p(G)$, and for all $j \neq i$, either $C_j' = 0$ or $C_j'$ is a $\calY$-projective module.
	\end{lemma}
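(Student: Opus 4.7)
The plan is to adapt the inductive splitting technique used in the proof of Theorem \ref{boucthmgeneralization} so that it handles the unique degree $i$ in which $C(P)$ carries non-trivial homology for $P \not\in \calY$. At each degree $j$, decompose $C_j = X_j \oplus Y_j$, where $X_j$ is the maximal $\calY$-projective summand (so $X_j(P) = 0$ for $P \not\in \calY$ by Proposition \ref{brauerppermproperties}(c)) and $Y_j$ collects the remaining indecomposable summands, each having vertex outside $\calY$.

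I first reduce the degrees above $i$. Let $j > i$ be maximal with $C_j \neq 0$. For $P \not\in \calY$, $C_j(P) = Y_j(P)$ and $H_j(C(P)) = 0$, so $d_j(P)$ is injective on $Y_j(P)$. At the vertex $V_U \not\in \calY$ of each indecomposable summand $U \subseteq Y_j$, Theorem \ref{ppermvertex} then yields split injectivity of $d_j|_U$. The $k$-linear dual of the lemma preceding Theorem \ref{boucthmgeneralization} assembles these into split injectivity of $d_j|_{Y_j}$ as a whole, allowing an isomorphic decomposition $C \cong C'' \oplus \bigl(Y_j \xrightarrow{\cong} d_j(Y_j)\bigr)$ in which the right-hand summand is contractible, concentrated in degrees $j, j-1$, and $C''_j = X_j$ is $\calY$-projective. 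Since $-(P)$ sends the contractible summand to another contractible complex, $C''$ still satisfies the homological hypothesis. Inducting downward exhausts all degrees above $i$. A dual argument, working upward from the minimum nonzero degree and using split surjectivity as in the original proof of Theorem \ref{boucthmgeneralization}, exhausts all degrees below $i$.

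Let $C'$ denote the resulting complex. Then $C'_j$ is $\calY$-projective for every $j \neq i$, so for $P \not\in \calY$ the complex $C'(P)$ is concentrated in degree $i$, giving $\dim_k Y'_i(P) = \dim_k H_i(C'(P)) = 1$. Decompose $Y'_i = \bigoplus_\alpha U_\alpha$ into indecomposables with vertices $V_\alpha \not\in \calY$. For $S \in \Syl_p(G)$, we have $U_\alpha(S) \neq 0$ exactly when $V_\alpha =_G S$, so $\dim_k Y'_i(S) = 1$ forces a unique Sylow-vertex summand $N$ with $\dim_k N(S) = 1$. If some other summand $V$ had non-Sylow vertex $W$, then at $P = W \not\in \calY$ both $N(W) \neq 0$ (since $W \leq_G S$ and $N$ has Sylow vertex) and $V(W) \neq 0$, contradicting $\dim_k Y'_i(W) = 1$. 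Hence $Y'_i = N$ and $C'_i = X'_i \oplus N = M \oplus N$ with $M = X'_i$ a $\calY$-projective module or zero.

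The main technical subtlety is the split-injectivity assembly in the first step, namely promoting split injectivity of each $d_j|_{U_\alpha}$ to split injectivity of $d_j|_{Y_j}$; this is handled by $k$-dualizing the lemma immediately preceding Theorem \ref{boucthmgeneralization}. One must also verify that the homological hypothesis is preserved under each contractible split-off, which is immediate since Brauer construction is additive and fixes identity maps. The remaining ingredients are direct adaptations of arguments already present in the paper.
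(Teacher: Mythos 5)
Your argument is correct, and it takes a genuinely different route from the paper's. The paper first disposes of the case $\calY = \emptyset$, then uses semisimplicity of $k[N_G(S)/S]$ to locate a distinguished trivial source summand $E$ of $C_i$ with $E(S) \simeq k_\omega[i]$, forms the two-term subcomplex $D = (E \to M)$ with the inclusion chain map $\phi: D \to C$, proves $\phi(P)$ is a quasi-isomorphism for all $P \notin \calY$, and then applies Theorem \ref{boucthmgeneralization} to the mapping cone of $\phi$ to conclude that everything outside $E$ and $M$ can be made $\calY$-projective, finishing with a vertex argument to handle $M$. You instead run the contractible-summand elimination of Theorem \ref{boucthmgeneralization} directly on $C$ from both ends toward degree $i$, and only afterwards identify the Sylow-vertex summand $N$ by counting: $\dim_k Y'_i(S) = 1$ forces a unique Sylow-vertex summand, and any further summand with vertex $W \notin \calY$ would make $\dim_k Y'_i(W) \geq 2$. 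Your version is more self-contained (it needs only the dual of the lemma preceding Theorem \ref{boucthmgeneralization}, not the theorem itself, and it absorbs the $\calY = \emptyset$ case uniformly since then $X_j = 0$ throughout), at the cost of redoing the elimination bookkeeping; the paper's version localizes all the work into one mapping cone and exhibits the distinguished summand explicitly from the outset. One point of wording to tighten: split injectivity of each individual restriction $d_j|_{U_\alpha}$ does \emph{not} by itself assemble into split injectivity of $d_j|_{Y_j}$ (consider two copies of the same summand mapping identically); what the dualized lemma actually requires, and what you do establish just beforehand, is that $d_j(V_\alpha)$ is injective on \emph{all} of $Y_j(V_\alpha)$ for each vertex $V_\alpha$ occurring in $Y_j$. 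Phrase the appeal to the lemma in terms of that stronger hypothesis and the step is airtight.
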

	\begin{proof}
        Note that if $\calY = \emptyset$, an argument similar to the proof of Theorem \ref{kernelthm} shows $C \simeq k_\omega[i]$ for some $i \in \Z$. Therefore, way may assume $\calY$ is nonempty.

		We may assume without loss of generality that $C$ contains no contractible summands. Let $S \in \Syl_p(G)$ and set $i = h(C(S))$. Then that each $(C(S))_i$ consists of direct sums of simple modules, as $k[N_G(S)/S]$ is semisimple. Therefore, we have a homotopy equivalence $k_{\omega}[i]\simeq C(S)$ for some $\omega \in \Hom(N_G(S)/S, k^\times)$. Therefore, $C_i$ contains a corresponding trivial source direct summand $E$, which satisfies $k_{\omega}[i]= E(S)$.

        We let $M$ be the summand of $C_{i-1}$ of minimal dimension containing $d_i(E)$, possibly $M = 0$ if $d_i(E)=0$. Now, define the two-term chain complex $D$ \[0 \to E \to M \to 0 \] with $E$ in degree $i$ and the nonzero differential induced by $d_i$ (possibly 0). We have a chain map $\phi$, with nonzero componentwise maps induced by inclusion.

		\begin{figure}[H]
			\centering
			\begin{tikzcd}
			\cdots \ar[r] & E \ar[d, hookrightarrow, "\phi_i"] \ar[r] & M \ar[d, hookrightarrow, "\phi_{i-1}"] \ar[r] & \cdots \\
			\cdots \ar[r] & C_i \ar[r, "d_i"] & C_{i-1} \ar[r] & \cdots
			\end{tikzcd}
		\end{figure}

		By construction, $\phi(S): D(S) \to C(S)$ is a homotopy equivalence. Let $P \not\in \calY$. We claim that $C(P)$ is homotopy equivalent to the chain complex $N[i]$, for some $kG$-module $N$ with $k$-dimension one. Indeed, for any $p$-subgroup $Q$ of $N_G(P)/P$, $C(P)(Q)$ satisfies $\dim_k H_i(C(P)(Q)) = 1$ and $C(P)(Q)$ is exact in all other degrees. Therefore, inductively deleting the acyclic direct summands of $C(P)$ yields a homotopy equivalence with $N[i]$, and since $\dim_k H_i(C(P)) = 1$, $N$ must have $k$-dimension one.

        We next claim that for any $p$-subgroup $P \not\in \calY$, $\phi(P): D(P) \to C(P)$ is a quasi-isomorphism. Suppose not, then either $(\phi_i (E))(P) \in \im d_{i+1}(P)$ or $(\phi_i(E))(P) \not\in \ker d_i$. However, in both cases, since $C(P) \simeq N[i]$, this would imply that $\phi(E)(P)$ lies in an indecomposable contractible summand of $C(P)$ of the form $0 \to N \to N \to 0$. An inductive argument up the poset $s_p(G)$ implies that $\phi(E)(S)$ also lies in an indecomposable contractible summand of $C(S)$ of the form $0 \to N' \to N' \to 0$, for some $k[N_G(S)/S]$-module $N'$ with $k$-dimension one as well, which contradicts our construction. Thus $\phi(P)$ is a quasi-isomorphism for all $P \not\in\calY$. Moreover, this shows that $d_i(P)$ is the zero map for all $P \not\in \calY$.

		Now, by the previous claim and Theorem \ref{boucthmgeneralization}, the mapping cone $C(\phi)$ is homotopy equivalent to a complex with $\calY$-projective components in all degrees. Therefore, by considering the structure of the mapping cone, $C$ is homotopy equivalent to an indecomposable complex with $\calY$-projective components in all degrees, with the exception of the summands $E$ of $C_i$ and $M$ of $C_{i-1}$. Replace $C$ with this reduced complex. If $M$ is $\calY$-projective, we are done. Otherwise, suppose $M$ has vertex $P \not\in \calY$. In this case, $C(P)$ contains exactly two nonzero components, $C_i = E(P)$ and $C_{i-1} = M(P)$. However, since $\phi(P)$ is a quasi-isomorphism, $(\im d_i(E))(P) = 0$, and since $C_{i-2}$ has no summands with vertex $P$, $(\ker d_{i-1}(M))(S) = M(S)$. Since these are the only two summands with vertex $P$, $H_{i-1}(C(P)) \cong M(P) \neq 0$, a contradiction. Thus $M$ has non-Sylow vertices, and we are done.
	\end{proof}

	As an application of this lemma, we obtain a structural result about faithful endotrivial complexes.

	\begin{corollary}
		Let $C \in \partial\calE_k(G)$, and let $\calX$ be the set of $p$-subgroups of $G$ which do not contain any nontrivial normal subgroups of $G$ as subgroups. Then there exists $i \in \Z$ for which $C_j$ is $\calX$-projective or $0$ for all $j \neq i$, and $C_i = M \oplus N$ where $M$ is $\calX$-projective or 0 and $N$ has Sylow vertices.
	\end{corollary}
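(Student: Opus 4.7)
The plan is to derive the corollary as a direct application of Lemma~\ref{relprojectivitylimitingthm} with $\calY = \calX$. First I would verify the combinatorial conditions on $\calX$: it contains the trivial subgroup, it is closed under $G$-conjugation because the property of containing a nontrivial normal $p$-subgroup of $G$ is manifestly $G$-invariant, and it is closed under taking subgroups because any nontrivial $Q \in s_p^\triangleleft(G)$ contained in a subgroup of $P$ is already contained in $P$. Moreover, provided $G$ admits at least one nontrivial normal $p$-subgroup $R$, every Sylow $p$-subgroup of $G$ contains $R$, so Sylow subgroups are excluded from $\calX$ as the lemma requires.

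Next I would verify the homological hypothesis of the lemma, namely that for every $P \in s_p(G) \setminus \calX$ one has $C(P) \simeq k[0]$, so in particular $\dim_k H_0(C(P)) = 1$ and $C(P)$ is acyclic in every other degree. Given such a $P$, pick a nontrivial $Q \in s_p^\triangleleft(G)$ with $Q \leq P$. Faithfulness of $C$ means $C \in \ker \Def^G_{G/Q}$, i.e.\ $C(Q) \simeq k[0]$ in $K^b({}_{k[G/Q]}\mathbf{mod})$. Since $Q \trianglelefteq P$, Proposition~\ref{brauerppermproperties}(d) applied in each degree yields a chain isomorphism $C(P) \cong (C(Q))(P)$, and since the Brauer construction preserves homotopy equivalence and sends the trivial module to itself, this is homotopy equivalent to $k[0]$.

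With the hypotheses verified, Lemma~\ref{relprojectivitylimitingthm} produces a representative $C'$ of the class of $C$ and an integer $i$ realizing exactly the decomposition in the statement of the corollary, and replacing $C$ by $C'$ finishes the main case. The only remaining situation is when $G$ has no nontrivial normal $p$-subgroup, in which case the intersection defining $\partial\calE_k(G)$ is vacuous, $\calX = s_p(G)$, every $p$-permutation $kG$-module is tautologically $\calX$-projective, and the conclusion is satisfied by taking $N = 0$ and $M = C_i$ for any $i$. I do not foresee any real obstacle; the entire argument is a repackaging of the faithfulness condition into the input required by the key lemma, routed through Proposition~\ref{brauerppermproperties}(d).
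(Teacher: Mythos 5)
Your proposal is correct and is exactly the paper's argument: the paper proves the corollary in one line by invoking Lemma~\ref{relprojectivitylimitingthm} with $\calY=\calX$, and you have simply filled in the (routine) verifications that $\calX$ is conjugation- and subgroup-closed, that it excludes Sylow subgroups when $O_p(G)\neq 1$, and that faithfulness plus Proposition~\ref{brauerppermproperties}(d) gives $C(P)\simeq k[0]$ for $P\notin\calX$. The only quibble is the degenerate case $O_p(G)=1$: rather than taking $N=0$ (which does not literally ``have Sylow vertices''), take $i$ with $C_i(S)\neq 0$ for $S\in\Syl_p(G)$ and let $N$ be a Sylow-vertex summand of $C_i$; since then $\calX=s_p(G)$, the rest is automatically $\calX$-projective.
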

    \begin{proof}
        This follows immediately from the previous lemma by setting $\calY$ to be the set of all $p$-subgroups of $G$ which contain no nontrivial normal subgroups of $G$ as subgroups.
    \end{proof}

	\section{Determining $\calE_k(G)$ for some groups} \label{section5}

	We can now completely deduce the structure of $\calE$ for some classes of well-understood groups. Most of the computations will rely on determining $\partial\calE_k(G)$.

	\begin{remark}
		Recall that a projective resolution of a module $M$ is a chain complex consisting of projective modules in each degree which is acyclic everywhere except in degree zero, where the homology is isomorphic to $M$. One source of endotrivial complexes comes from truncating periodic projective resolutions of the trivial $kG$-module $k$. We first will set some conventions. Assume that all projective resolutions are of the form $P \to M$ where \[P = \cdots \to P_1 \to P_0 \to 0\] is acyclic except in degree zero, $H_0(P)\cong M$ and $M \in {}_{kG}\mathbf{mod}$ in degree $-1$ of the chain complex $P \to M$ unless otherwise stated. By \textit{periodic resolution}, we mean a projective resolution $P$ for which there exist $i \in \N$ such that $\ker d_{i} \cong M$. These arise from \textit{periodic modules}, modules $M$ for which $M \cong \Omega^i(M)$ for some $i \in \N$.

		By the \textit{period} of a periodic resolution or module, we mean the minimum value of $i > 0$ satisfying $\Omega^i(M) = M$. We may \textit{truncate} a periodic resolution $P$ of period $i$ by taking the chain complex \[\hat{P} = 0 \to P_{i-1}\to \cdots \to P_0 \to M \to 0.\] It follows that $H_{i -1}(\hat{P}) \cong M$ and $H_{j}(\hat{P}) = 0$ for $j \neq i - 1$. Note that if the period of a projective resolution is $n$, the corresponding truncation has length $n+1$.

		It is well-known that the trivial $kG$-module $k$ is periodic if and only if $G$ has $p$-rank 1, in other words, $G$ has cyclic or quaternion Sylow $p$-subgroup. Let $S$ denote the Sylow $p$-subgroup of $G$. If $S = C_2$, $k$ has period 1, if $S = C_{p^n}$ for $p > 2$ or $p = 2$ and $n > 1$, then $k$ has period 2, and if $S = Q_{2^n}$, then $k$ has period 4. The first two periodic resolutions were given in Section \ref{faithfulsection} in the case of $p$-groups. We refer the reader to \cite[Chapter 12.7]{CE56} for an explicit construction of the periodic resolution of $k$ as $kQ_{2^n}$-module for $n \geq 3$.
	\end{remark}

	\begin{prop}\label{prk1gen}
		Let $G$ be a group with $p$-rank 1 and normal Sylow subgroup $S$, and let $C$ be a minimal truncation of the periodic resolution of the trivial module. Then $\langle C\rangle = \partial\calE_k(G)$.
	\end{prop}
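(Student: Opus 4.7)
The plan has two containments: $\langle C\rangle \subseteq \partial\calE_k(G)$ and $\partial\calE_k(G) \subseteq \langle C\rangle$. For the first, I would verify directly that $C$ is both endotrivial and faithful. Since $C$ is a minimal truncation of a periodic projective resolution of $k$, every component of $C$ is projective except the bottom one, which is $k$ in degree $0$. Projective $kG$-modules have vertex $\{1\}$, so by Proposition \ref{brauerppermproperties}(c) they vanish under $(-)(P)$ for any nontrivial $p$-subgroup $P$; combined with $k(P) = k$, this gives $C(P) \simeq k[0]$ for every nontrivial $P \in s_p(G)$, yielding both faithfulness and endotriviality at all such $P$ via Theorem \ref{endotrivdef2}. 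At $P = 1$, $C(1) = C$ has nontrivial homology $k$ concentrated in the top degree by periodicity, so $C$ is endotrivial there as well, and $h_C(1) = e$ where $e$ is the period.

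For the reverse containment, let $D \in \partial\calE_k(G)$ and set $n := h_D(1)$; the goal is to prove $D \simeq C^{\otimes m}$ for $m = n/e$. Under our hypotheses, every nontrivial $p$-subgroup of $G$ contains a nontrivial normal $p$-subgroup: for $S$ cyclic, every subgroup of $S$ is characteristic hence normal in $G$; for $S$ generalized quaternion, every nontrivial subgroup of $S$ contains the unique order-$2$ subgroup, which is the center of $S$ and hence characteristic. Thus the set $\calX$ of the structural corollary following Lemma \ref{relprojectivitylimitingthm} equals $\{1\}$, yielding $D \simeq D'$ with $D'_j$ projective for $j \neq 0$ and $D'_0 = M \oplus N$, where $M$ is projective and $N$ is indecomposable of Sylow vertex. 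Faithfulness gives $N(S) \simeq k[0]$ as a $k[G/S]$-complex, and since $(-)(S)$ restricts to a bijection between indecomposable $p$-permutation $kG$-modules of vertex $S$ and simple $k[G/S]$-modules (using that $S$ is normal, so $G = N_G(S)$), we conclude $N \cong k$ as a trivial $kG$-module.

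The main obstacle is showing $e \mid n$. My first approach is character-theoretic: by the commutative diagram relating $\Xi$, $\Lambda$, and $\beta_G$, the sign constituent of $\beta_G(\Lambda(D))_1$ equals $(-1)^n$, while the constituent at every nontrivial normal $p$-subgroup is trivial by faithfulness. The coherence condition $\chi_1(x) = \chi_{\langle x_p\rangle}(\overline x)$ applied at a nontrivial $p$-element $x$ forces $(-1)^n = 1$ in $K^\times$, yielding $2 \mid n$. This settles the cases with $e \in \{1,2\}$ (i.e., $S = C_2$ and $S$ cyclic of larger order). For $S = Q_{2^n}$ with $e = 4$, coherence alone only gives $2 \mid n$; to upgrade to $4 \mid n$, I would restrict $D$ to $S$ and invoke the analogous result for the $p$-group case (which can be established first by induction on $|G|$). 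Once $e \mid n$ is known, the complex $D \otimes C^{\otimes -m}$ with $m = n/e$ lies in $\partial\calE_k(G) \cap \ker h$. By Theorem \ref{kernelthm}, elements of $\ker h$ have the form $k_\omega[0]$ for $\omega \in \Hom(G, k^\times)$, and faithfulness demands $k_\omega(S) = k_\omega \simeq k$ as a $k[G/S]$-module, forcing $\omega = 1$. Hence $D \simeq C^{\otimes m}$, completing the proof.
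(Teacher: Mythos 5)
Your first containment and the structural reduction via the corollary to Lemma \ref{relprojectivitylimitingthm} match the paper's setup, and the coherence argument extracting $2 \mid n$ from $\beta_G$ is a genuinely nice observation that the paper does not use. But the divisibility step $e \mid n$ --- which you correctly flag as the main obstacle --- is where the proof breaks. First, you take $e \in \{1,2\}$ for cyclic $S$ and $e = 4$ for quaternion $S$; these are the periods of $k$ over $kS$, not over $kG$. When $G \neq S$ and the $p$-complement acts nontrivially on $S$, the period of $k$ over $kG$ is strictly larger: for $G = S_3$ at $p = 3$ one computes $\Omega^2(k) \cong \epsilon$ (the sign character) and the period is $4$, yet the complex $P_{\Omega(k)} \to P_k \to k$ is a faithful endotrivial complex with h-mark $2$ at the trivial subgroup. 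So ``$2 \mid n$ settles the cyclic case'' is false as stated, and the same example shows the top homology of the generator need not be the trivial module, contrary to your opening paragraph. Second, for $S = Q_{2^m}$ the appeal to ``the $p$-group case, established by induction on $|G|$'' is circular at the base case $Q_8$: every proper subgroup of $Q_8$ is cyclic, so restriction can only ever yield $2 \mid n$, never $4 \mid n$.

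The missing idea --- which is the entire content of the paper's one-line proof --- is to actually use the reduction you derived. From $D \simeq D'$ with $D'_0 = M \oplus k$ and all other terms projective, stripping contractible summands leaves a complex $P_n \to \cdots \to P_1 \to k$ that is exact except in degree $n$, where the homology is $\Omega^n(k)$ up to projective summands; since $\dim_k H_n(D) = 1$ and $\Omega^n(k)$ has no projective summands, $\Omega^n(k)$ is one-dimensional. Thus every positive h-mark realized by a faithful complex exhibits a truncated projective resolution of $k$ of that length with one-dimensional top syzygy, and minimality of the truncation defining $C$ forces $e \mid n$; this also disposes of $Q_8$ directly, since a $2$-group in characteristic $2$ has no nontrivial one-dimensional modules and $\Omega^2_{kQ_8}(k)$ has dimension greater than one. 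Note that this argument also shows that for the proposition to be correct when $G \neq S$, ``minimal truncation of the periodic resolution'' must be read as truncating at the least $n$ for which $\Omega^n(k)$ has dimension one (periodicity up to a twist by a linear character), not at the honest period of $k$.
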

	\begin{proof}
		In this case, $S$ is either cyclic or generalized quaternion, and in either case, the unique subgroup of $S$ of order 2 is normal in $G$. Therefore $\partial\calE_k(G)$ has rank at most 1, corresponding to the h-mark at the trivial subgroup. If $\partial\calE_k(G)$ was not generated by $C$, then Theorem \ref{relprojectivitylimitingthm} would imply the existence of a periodic projective resolution of $k$ with shorter period, contradicting minimality of the truncation.
	\end{proof}

	\subsection{$\calE_k(G)$ for abelian groups}

	We first deduce the structure of $\calE_k(G)$ for any abelian group $G$. In this situation, every subgroup is normal, which allows for restriction to preserve structural properties.

	\begin{prop}\label{abelianfaithfulmorphism}
		Let $H \leq G$ be Dedekind groups, i.e. groups for which every subgroup is normal. If $p \mid |G|$, then restriction induces an injective group homomorphism $\partial\calE_k(G) \to \partial \calE_k(H)$.
	\end{prop}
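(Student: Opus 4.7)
The plan is to treat well-definedness and injectivity separately, leaning on the fact that in a Dedekind group every $p$-subgroup is normal, which rigidifies both the definition of $\partial\calE_k$ and the behavior of h-marks under restriction. For \textbf{well-definedness}, let $C \in \partial\calE_k(G)$ and let $1 < Q \in s_p(H)$. Since $H$ is Dedekind, $Q \trianglelefteq H$; since $G$ is Dedekind, also $Q \trianglelefteq G$. Thus $C \in \partial\calE_k(G)$ forces $C(Q) \simeq k[0]$ in $\calE_k(G/Q)$. By Proposition~\ref{brauercommuteswithdualsandconj}(b), with $N_G(Q) = G$ and $N_H(Q) = H$, one has $(\Res^G_H C)(Q) = \Res^{G/Q}_{H/Q}(C(Q)) \simeq \Res^{G/Q}_{H/Q}(k[0]) = k[0]$, so $\Res^G_H C \in \partial\calE_k(H)$.

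For \textbf{injectivity}, suppose $C \in \partial\calE_k(G)$ satisfies $\Res^G_H C \simeq k[0]$; I show $C \simeq k[0]$. First, all h-marks of $C$ vanish: membership in $\partial\calE_k(G)$ forces $h_C(P) = 0$ for every nontrivial $P \in s_p(G)$ (since $C(P) \simeq k[0]$ places its unique nonzero homology in degree $0$), and compatibility of $h$ with restriction yields $h_C(1) = h_{\Res^G_H C}(1) = 0$. Hence $h_C \equiv 0$, and Theorem~\ref{kernelthm} gives $C \simeq k_\omega[0]$ for a (unique) $\omega \in \Hom(G, k^\times)$.

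To conclude $\omega = 1$, I invoke $p \mid |G|$. Pick $S \in \Syl_p(G)$; then $S \neq 1$ and $S \trianglelefteq G$ (Dedekind), so $C \in \partial\calE_k(G)$ gives $C(S) \simeq k[0]$ in $\calE_k(G/S)$. A direct computation yields $(k_\omega)(S) \cong k_\omega$ as $k[G/S]$-module: since $\omega|_S = 1$ ($k^\times$ has no $p$-torsion), $S$ acts trivially on $k_\omega$, so $(k_\omega)^S = k_\omega$, and for each $T < S$ the trace $\tr^S_T$ equals multiplication by the positive $p$-power $|S/T|$, which vanishes in $k$. Therefore $C(S) \simeq k_\omega[0]$, and the relation $k_\omega[0] \simeq k[0]$ in $\calE_k(G/S)$ forces $k_\omega \cong k$ as $k[G/S]$-module, i.e.\ $\omega$ is trivial on $G/S$. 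As $S \leq \ker \omega$ already, this gives $\omega = 1$ on $G$, and hence $C \simeq k[0]$.

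The main obstacle is the torsion factor $\Hom(G, k^\times)$ inside $\calE_k(G)$: a priori one might worry that restriction to $H$ loses information about a linear character, since the restriction map $\Hom(G, k^\times) \to \Hom(H, k^\times)$ need not be injective in general (e.g.\ when $p \nmid |H|$). The key trick is to bypass this issue by invoking the defining vanishing condition of $\partial\calE_k(G)$ at the Sylow $p$-subgroup $S$—which is nontrivial precisely because $p \mid |G|$—thereby killing the ambiguous character $\omega$ without ever requiring $\omega$ to restrict faithfully to $H$.
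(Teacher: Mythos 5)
Your proof is correct and follows essentially the same route as the paper: restriction preserves faithfulness because every $p$-subgroup is normal in both $G$ and $H$, and the kernel is killed by combining Theorem~\ref{kernelthm} with the faithfulness condition at a nontrivial $p$-subgroup (which exists since $p \mid |G|$). You spell out more explicitly than the paper both the well-definedness step and the computation $(k_\omega)(S) \cong k_\omega$ that forces $\omega = 1$, but these are details the paper's argument implicitly relies on, not a different approach.
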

	\begin{proof}
		Any faithful chain complex $C$ must have h-marks zero and $H_0(C(P)) \cong k$ for all nontrivial $p$-subgroups $1 < P \in s_p(G)$. Since $p \mid |G|$, there exists at least one nontrivial $p$-subgroup, so by Theorem \ref{kernelthm}, the only faithful endotrivial complex with h-marks entirely zero is the trivial endotrivial complex $k$. So if $C$ is a nontrivial faithful endotrivial complex, $\Res^G_H C$ must have homology in a nonzero degree, and it follows that $\Res^G_H C$ also has h-mark 0 at $P\leq H$ and $H_0(C(P)) \cong k$ for all nontrivial $p$-subgroups $P \in s_p(H)$. Thus $\Res^G_H C$ is a trivial faithful endotrivial complex of $kH$-modules.
	\end{proof}

	\begin{prop}\label{prank2}
		Set $G = C_p \times C_p$. Then $\partial \calE_k(G)$ is trivial. In particular, if $p = 2$, \[\calE_k(G) = \langle  k[G/H_1] \to k, k[G/H_2] \to k, k[G/H_{3}] \to k, k[1] \rangle,\] and if $p$ is odd, \[\calE_k(G) = \langle k[G/H_1] \to k[G/H_1] \to k, \dots, k[G/H_{p+1}]\to k[G/H_{p+1}] \to k, k[1] \rangle,\] where $H_1,\dots, H_{p+1}$ are the $p+1$ subgroups of $G$ with index $p$, and the complexes are inflated truncated periodic free resolutions of $k$.
	\end{prop}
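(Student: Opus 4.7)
The plan is to combine the faithful decomposition (Theorem \ref{faithfuldecomp}) with Proposition \ref{prk1gen}, reducing the statement to the single claim $\partial\calE_k(G)=0$. The normal $p$-subgroups of $G=C_p\times C_p$ are $1$, the $p+1$ index-$p$ subgroups $H_1,\ldots,H_{p+1}$, and $G$ itself; and since $G/H_j\cong C_p$ while $\partial\calE_k(\{1\})=\calE_k(\{1\})\cong\Z$ is generated by $k[1]$, inflating the truncated periodic generators of each $\partial\calE_k(G/H_j)\cong\partial\calE_k(C_p)$ along $G\twoheadrightarrow G/H_j$ produces precisely the listed complexes $k[G/H_j]\to k$ (when $p=2$) or $k[G/H_j]\to k[G/H_j]\to k$ (when $p$ is odd). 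Thus everything reduces to verifying $\partial\calE_k(G)=0$.

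Suppose for contradiction that $C\in\partial\calE_k(G)$ is nontrivial, with h-mark $n=h_C(1)\neq 0$; replacing $C$ by its dual if necessary, I may assume $n>0$. The corollary following Lemma \ref{relprojectivitylimitingthm}, applied with $\calY=\{1\}$ (the only $p$-subgroup of the abelian group $G$ containing no nontrivial normal subgroup of $G$), produces a representative $C'$ whose degree-$0$ term is of the form $k\oplus M$ for some free $kG$-module $M$, and whose other terms are free as well. Passing to $R_k(G)=\Z\cdot[k]$ and using $[kG]=|G|\cdot[k]$, the two expressions $\Lambda(C')=[k]+\bigl(\sum_i(-1)^ir_i\bigr)[kG]$ and $\Lambda(C')=(-1)^n[k]$ yield the identity $1+|G|\sum(-1)^ir_i=(-1)^n$, which since $|G|=p^2\geq 4$ forces $n$ to be even; in particular $n\geq 2$.

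The decisive step is then the short exact sequence of complexes $0\to k[0]\to C'\to C'/k[0]\to 0$, where $k[0]$ is the distinguished $k$-summand of $C'_0$. The long exact sequence in homology gives $H_1(C'/k[0])\cong H_n(C'/k[0])\cong k$, with all other homology zero. Since $C'/k[0]$ consists of free $kG$-modules in every degree, it is a perfect complex in $D^b(kG)$. Its canonical Postnikov truncation yields a distinguished triangle $k[1]\to C'/k[0]\to k[n]\to k[2]$ classified by a morphism in $\Hom_{D^b(kG)}(k[n],k[2])=H^{2-n}(G,k)$. For $n>2$, this group vanishes, so the triangle splits and $C'/k[0]\cong k[1]\oplus k[n]$ in $D^b(kG)$; but $k$ has infinite projective dimension over $kG$ (as $G$ is a nontrivial $p$-group), so this direct sum is not perfect, a contradiction. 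For $n=2$, the classifying map is a scalar in $H^0(G,k)=k$: if it vanishes, the same splitting obstruction applies, while if it is nonzero it is an isomorphism $k[2]\to k[2]$, forcing $C'/k[0]\cong 0$ in $D^b(kG)$ and contradicting $H_1(C'/k[0])\cong k$. The main obstacle is the case $n=2$, where the relevant Ext group is nonzero and both candidate extensions must be excluded separately; everything else is either bookkeeping via the faithful decomposition or standard homological facts concerning perfect complexes over $p$-group algebras.
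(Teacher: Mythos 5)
Your reduction to $\partial\calE_k(G)=0$ via Theorem \ref{faithfuldecomp} and Proposition \ref{prk1gen}, the use of the corollary to Lemma \ref{relprojectivitylimitingthm} to put $C'$ in the form (one copy of $k$ in degree $0$) $\oplus$ (free modules), and the dimension count in $R_k(G)$ forcing $n$ even are all sound and consistent with the paper's strategy. (One small caveat: the corollary does not guarantee that the distinguished $k$-summand of $C'_0$ is killed by $d_0$, so $k[0]$ need not literally be a subcomplex; this is repairable, but it is not the real problem.)

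The genuine gap is in the decisive step: your truncation triangle points the wrong way. For a chain complex $Y$ with homology $k$ in degrees $1$ and $n\geq 2$, the canonical truncation $\tau_{\geq 2}Y\subseteq Y$ is the \emph{sub}complex carrying the high-degree homology, so the distinguished triangle is $k[n]\to Y\to k[1]\to k[n+1]$, with connecting map in $\Hom_{D^b(kG)}(k[1],k[n+1])\cong\Ext^n_{kG}(k,k)=H^n(G,k)$ --- not in $\Hom(k[n],k[2])=\Ext^{2-n}(k,k)$. There is no distinguished triangle of the shape $k[1]\to Y\to k[n]\to k[2]$ unless $Y$ already splits, which is what you are trying to disprove. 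Since $H^n(C_p\times C_p,k)\neq 0$ for every $n\geq 0$, the obstruction group never vanishes and no contradiction follows from your splitting argument. Indeed, the argument as written would ``prove'' that no perfect complex over $kG$ ($G$ a nontrivial $p$-group) can have one-dimensional homology in exactly two degrees, which is false: for $G=C_p$ the complex $kG\xrightarrow{\,g-1\,}kG$ is perfect with homology $k$ in two consecutive degrees, yet is neither zero nor isomorphic to a sum of shifts of $k$ in $D^b(kG)$. What is actually needed --- and what the paper uses --- is a non-formal input: $Y$ is (up to shift) a truncated periodic projective resolution of $k$, equivalently its existence forces $\Omega^{m}(k)\cong k$ for some $m>0$, and $k$ is periodic only when $G$ has $p$-rank $1$. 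Your argument must be replaced by an appeal to this non-periodicity of $k$ for $C_p\times C_p$; the cohomology of $C_p\times C_p$ (a polynomial, resp.\ polynomial tensor exterior, algebra) is exactly why the Ext groups you need to vanish do not.
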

	\begin{proof}
		Suppose for contradiction that $\partial \calE_k(G) \neq 0$. Then there exists some nontrivial endotrivial chain complex $C$ for which $h(C(P)) = 0$ for all $1 < P \leq C_p \times C_p$, but $h(C) \neq 0$. Assume $h(C) > 0$, the other case follows similarly. By inductively deleting contractible summands of $C$, it suffices to assume $C_i = 0$ for $i < 0$. Therefore, by Lemma \ref{relprojectivitylimitingthm}, $C$ is homotopy equivalent to a truncated periodic free resolution of the $kG$-module $k$. However since $G$ has $p$-rank 2, no such resolution exists, a contradiction.

        Now, since there are $p+3$ subgroups of $G$, all of which are normal, the rank of $\calE_k(G)$ is at most $p+3$. However, if the rank was $p+3$, it would follow that there exists some endotrivial complex $C$ with h-mark at the trivial subgroup nonzero and h-mark at all other subgroups zero, contradicting that $\partial \calE_k(G) = 0$. Therefore $\calE_k(G)$ has rank at most $p+2$, and it follows from Theorem \ref{faithfuldecomp} that the generators in the proposition statement given form a basis of $\calE_k(G)$.
	\end{proof}

	From this, the structure of $\calE_k(G)$ for $G$ abelian follows.

	\begin{theorem}
		Let $G$ be an abelian group and let $s_p^1(G)$ denote the set of all (normal) $p$-subgroups of $G$ for which $G/P$ has $p$-rank at most 1. Then,
		\[\calE_k(G) \cong \prod_{P \in s_p^1(G)}\partial\calE_k(G/P).\] $\calE_k(G)$ is generated by endotrivial complexes which arise from inflating truncated periodic free resolutions of $k$ and shifts of $k$-dimension one representations.
	\end{theorem}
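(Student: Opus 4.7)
The plan is to invoke the canonical decomposition in Theorem \ref{faithfuldecomp} and then show that most of its factors vanish by restricting to $C_p\times C_p$. Since $G$ is abelian, every subgroup is normal, so $s_p^\triangleleft(G)=s_p(G)$, and Theorem \ref{faithfuldecomp} immediately produces an isomorphism
\[\calE_k(G) \cong \prod_{P \in s_p(G)} \partial\calE_k(G/P).\]
The first real step is then to argue that $\partial\calE_k(G/P) = 0$ whenever $G/P$ has $p$-rank at least $2$: in that case $G/P$, being abelian, contains a subgroup $H \cong C_p \times C_p$, and both $G/P$ and $H$ are Dedekind with $p \mid |G/P|$, so Proposition \ref{abelianfaithfulmorphism} provides an injective restriction homomorphism $\partial\calE_k(G/P) \hookrightarrow \partial\calE_k(H)$. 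By Proposition \ref{prank2} the target is trivial, which forces $\partial\calE_k(G/P) = 0$. This collapses the product to exactly those $P$ with $G/P$ of $p$-rank $\leq 1$, i.e.\ to $P \in s_p^1(G)$, yielding the claimed decomposition.

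For the generators, I would split $s_p^1(G)$ into the $p$-rank $0$ and $p$-rank $1$ pieces. If $G/P$ has $p$-rank $0$, then $k[G/P]$ is semisimple, so every endotrivial complex of $k[G/P]$-modules is homotopy equivalent to its homology, a $k$-dimension one module concentrated in a single degree; since $\partial\calE_k(G/P) = \calE_k(G/P)$ in this case (there are no nontrivial normal $p$-subgroups), the component is generated by $k[1]$ and the linear characters $k_\omega[0]$ with $\omega \in \Hom(G/P, k^\times)$. (Here $\Hom(G/P, k^\times) = \Hom(G, k^\times)$ for such $P$ since $k$ contains no nontrivial $p$-th roots of unity.) If $G/P$ has $p$-rank $1$, then because $G/P$ is abelian, its Sylow $p$-subgroup is automatically normal, so Proposition \ref{prk1gen} applies and $\partial\calE_k(G/P)$ is generated by a minimal truncation of the periodic projective resolution of the trivial module. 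Inflating both families of generators along $G \twoheadrightarrow G/P$ via the isomorphism $\Psi$ of Theorem \ref{faithfuldecomp} produces exactly the generators described in the statement.

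I do not expect a major obstacle here: the heavy lifting has already been done in Theorem \ref{faithfuldecomp}, Proposition \ref{abelianfaithfulmorphism}, and Proposition \ref{prank2}, so the proof is essentially an assembly argument. The only delicate point to verify carefully is that restriction really does preserve the faithful component in the abelian setting, which is why Proposition \ref{abelianfaithfulmorphism} is indispensable — restriction is, in general, not compatible with deflation by arbitrary normal $p$-subgroups, and this is exactly the subtlety that the Dedekind hypothesis circumvents.
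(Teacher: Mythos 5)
Your proposal is correct and follows essentially the same route as the paper: apply the faithful decomposition of Theorem \ref{faithfuldecomp} (valid since all subgroups of an abelian group are normal), kill the factors with $G/P$ of $p$-rank at least $2$ by restricting to a $C_p\times C_p$ via Proposition \ref{abelianfaithfulmorphism} and Proposition \ref{prank2}, and identify the surviving factors using Proposition \ref{prk1gen} together with the semisimple ($p$-rank $0$) case. Your added checks — that the Sylow subgroup of $G/P$ is automatically normal so Proposition \ref{prk1gen} applies, and that $\Hom(G/P,k^\times)=\Hom(G,k^\times)$ because $k^\times$ has no nontrivial $p$-torsion — are correct and slightly more explicit than the paper's own writeup.
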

	\begin{proof}
		If $G/P$ has $p$-rank greater than 1, $\partial\calE_k(G/P) = 0$, since by Proposition \ref{abelianfaithfulmorphism}, we have an injective group homomorphism $\partial \calE_k(G/P) \to \partial\calE_k(H)$, where $H \leq G/P$ is an elementary abelian $p$-group of rank 2. However, $ \partial\calE_k(H) = 0$ by Proposition \ref{prank2}, so $\partial\calE_k(G/P) = 0$.

		Conversely, if $G/P$ has $p$-rank at most 1, this case was covered in Proposition \ref{prk1gen}. Additionally, if $G/P$ has $p$-rank 0, i.e. $P$ is the unique Sylow $p$-subgroup of $G$, then $\partial\calE_k(G/P) = \calE_k(G/P)$ has rank 1 as well. In this case $\partial\calE_k(G/P)$ is generated by $k[1]$ and has torsion subgroup $\Hom(G/P, k^\times)$.

		From the isomorphism $\partial\calE_k(G) \cong \prod_{P\in s_p^\triangleleft(G)} \partial \calE_k(G/P)$, the result follows.
	\end{proof}

	\begin{corollary}\label{thm:abeliansurjective}
		If $G$ is abelian, $\Lambda: \calE_k(G) \to O(T(kG))$ is surjective.
	\end{corollary}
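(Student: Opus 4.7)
The plan is to combine the Boltje--Carman decomposition of $O(T(kG))$ from Remark \ref{trivsourceringdecomp} with the explicit description of $\calE_k(G)$ from the previous theorem. Since $G$ is abelian, for every $p$-subgroup $P$ one has $C_G(P) = G$ and hence $PC_G(P) = G$, so each local factor $\Hom(N_G(P)/PC_G(P), k^\times)$ in the decomposition is trivial and $\calL_G = 0$. Moreover, $G$-conjugation on subgroups of $S \in \Syl_p(G)$ is trivial, so $(B(S)^G)^\times = B(S)^\times$. The decomposition therefore collapses to $O(T(kG)) \cong B(S)^\times \times \Hom(G, k^\times)$. The second factor is immediately in the image of $\Lambda$ via $\Lambda(k_\omega[0]) = [k_\omega]$; indeed $\omega|_S$ is trivial since $k^\times$ has no nontrivial $p$-power-order elements in characteristic $p$, which also confirms compatibility of these classes with the product decomposition.

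By naturality of $\Lambda$ under $\Res^G_S$, showing surjectivity onto the $B(S)^\times$ factor reduces to showing $\Lambda : \calE_k(S) \to B(S)^\times$ is surjective when $S$ is an abelian $p$-group. If $p$ is odd, then $B(S)^\times = \{\pm 1\}$ by a classical result of Yoshida, and both signs are hit by $k[0]$ and $k[1]$. For $p = 2$, I would use the previous theorem to write $\calE_k(S) \cong \prod_{P \in s_p^1(S)} \partial\calE_k(S/P)$ and compute the Lefschetz invariants of the generators directly: $\Lambda(k[1]) = -1$; for $P \leq S$ of index $2$, the inflation of the period-$1$ truncation $k[S/P] \to k$ has Lefschetz invariant $1 - [S/P]$; and for $P$ with $S/P$ cyclic of order $\geq 4$, the period-$2$ truncation has Lefschetz invariant $[k]$ and contributes nothing to $B(S)^\times$. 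Thus $\im \Lambda \supseteq \langle -1, \, 1 - [S/H] : [S:H] = 2\rangle \leq B(S)^\times$.

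The main obstacle is then verifying that this subgroup equals all of $B(S)^\times$ for $S$ an abelian $2$-group. The $\F_2$-linear independence of the $d+1$ generators (where $d$ is the number of maximal subgroups of $S$) follows from a direct mark computation: $-1$ has mark $-1$ at the top subgroup $S$ while each $1 - [S/H_i]$ has mark $+1$ there; and among the $1 - [S/H_i]$'s, the mark at a maximal subgroup $H_j$ equals $-1$ precisely when $j = i$, using that distinct maximal subgroups of an abelian group are incomparable. The matching upper bound $\dim_{\F_2} B(S)^\times \leq d+1$ for abelian $2$-groups is a special case of Bouc's basis \cite[Theorem 7.4]{Bo07}, where the generalized tensor-induction step in the basis construction becomes trivial for abelian $G$, leaving precisely the $d+1$ generators above. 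Combining this with the $\Hom(G, k^\times)$ factor already hit by dimension-one representations gives the claimed surjectivity.
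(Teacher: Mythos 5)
Your proposal follows essentially the same route as the paper: collapse the Boltje--Carman decomposition to $B(S)^\times \times \Hom(G,k^\times)$ using $PC_G(P)=G$, hit the character factor with one-dimensional modules in degree $0$, and hit the Burnside factor with $k[1]$ and the two-term complexes $k[S/Q]\to k$ for $Q$ of index $2$. Two remarks. First, your reduction ``by naturality of $\Lambda$ under $\Res^G_S$'' is stated in the wrong direction: surjectivity of $\Lambda:\calE_k(S)\to B(S)^\times$ does not by itself produce endotrivial complexes of $kG$-modules, since restriction goes from $G$ to $S$. What you need is that for abelian $G$ the Sylow subgroup $S$ is a quotient of $G$ (namely $G/H$ with $H$ the $p'$-part), so your complexes over $S$ inflate to endotrivial complexes $k[G/QH]\to k$ over $G$ whose restriction back to $S$ recovers $k[S/Q]\to k$; this is exactly the complex $C_Q$ the paper writes down, and the fix is one line, but as written the step is unjustified. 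Second, where the paper simply cites Matsuda's description of $B(G)^\times$ for abelian groups (\cite[Example 4.5]{M83}) to see that $-1$ and the $1-[S/H]$ generate, you instead propose to verify $\F_2$-linear independence of these $d+1$ units by a mark computation and match it against the rank bound from Bouc's basis \cite[Theorem 7.4]{Bo07}; both computations you sketch are correct (the mark of $1-[S/H_i]$ at $H_j$ is $-1$ iff $i=j$ since distinct maximal subgroups of an abelian group are incomparable, and Bouc's basis for an abelian $2$-group is indexed by $S$ itself and its maximal subgroups). This is a legitimate alternative justification of the same generation fact, slightly more self-contained but longer than the direct citation.
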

	\begin{proof}
		\cite[Example 4.5]{M83} states that that if $G$ is abelian, \[B(G)^\times = \langle -[G/G], \{ [G/G] - [G/H] \mid [G:H] = 2\} \rangle.\] Under the decomposition afforded by $\kappa$ from \cite{BC23}, we have $O(T(kG)) = (B(S)^G)^\times \times \Hom(G,k^\times) \times \mathcal{L}_G$. However, since $N_G(P) = PC_G(P) = G$ for any $p$-subgroup $P$, $|\mathcal{L}_G| = 1$. Every element $\omega \in  \Hom(G,k^\times) \leq O(T(kG))$ satisfies that $(\kappa\circ \Lambda)( k_\omega[0]) = \omega$. Moreover, $k[1] \in \calE_k(G)$ satisfies $(\kappa\circ\Lambda)(k[1]) = -[S/S] \in (B(S)^G)^\times \leq O(T(kG))$, where $S \in \Syl_p(G).$ If $p$ is odd, we are done since in this case, $B(G)^\times = \{\pm [G/G]\}$.

		Otherwise, let $Q \leq S$ with $[S:Q] = 2$. Let $H$ be the unique largest $p'$-subgroup of $G$. Then, it is routine to verify that the chain complex $C_Q = k[G/QH] \to k$ is endotrivial, and that under restriction, $\Res^G_S C_Q = k[S/Q]\to k$, and that \[(\kappa\circ\Lambda )(C_Q)= [S/S] - [S/Q] \in (B(S)^G)^\times \leq O(T(kG)).\] Thus, $\Lambda$ is surjective.
	\end{proof}

	\subsection{$\calE_k(G)$ for $2$-groups with normal $2$-rank one}

	\begin{remark}\label{dihedralfaithfulendotrivialcomplex}
		We next focus on the $2$-groups with normal $2$-rank one (and additionally, the dihedral group $D_8$). The nonabelian $p$-groups with normal $p$-rank 1 are dihedral 2-groups, semidihedral 2-groups, and generalized quaternion 2-groups (see for instance \cite[Lemma 9.3.3]{Bou10}). Assume $k$ is a field of characteristic 2.

		We first focus on dihedral 2-groups, using the presentation \[D_{2^n} = \langle a ,b\mid a^{2^{n-1}} = b^2 = 1, {}^ba = a\inv\rangle.\] The normal subgroups of $D_{2^n}$ are as follows. There are 2 normal subgroups of index 2 isomorphic to $D_{2^{n-1}}$, which we denote $H^1_{2^n - 1} := \langle a^2, b \rangle$ and $H^2_{2^n - 1} := \langle a^2, ba \rangle.$ Next, for each $i \in \{0, \dots, n-1\}$, there is a copy of the cyclic group of $2^i$ elements of index $2^{n-i}$ which is normal in $D_{2^n}$, $H_{2^i} := \langle a^{2^{n - i + 1}} \rangle\cong C_{2^i}$. One may compute via the generators that for $k < n$, the quotient $D_{2^n}/H_{2^k}$ is isomorphic to $D_{2^{n-k}}$. Therefore, we have an isomorphism afforded by \ref{faithfuldecomp} \[\calE_k(D_{2^n})\cong \partial\calE_k(C_2)\times \partial\calE_k(C_2)\times \prod_{i=0}^n \partial\calE_k(D_{2^i}). \]

		We have already determined $\partial\calE_k(D_{2^i})$ for $i = 0,1,2$ (recalling $D_4 = V_4$ and $\partial\calE_k(V_4) = 0$), so it suffices to determine $\partial\calE_k(D_{2^i})$ for $i \geq 3$. In these cases, the computation is independent of $i \geq 3$. For $i \geq 3$, the only conjugacy classes of subgroups of $D_{2^i}$ not containing a nontrivial normal subgroup have representatives $\{1, \langle b\rangle, \langle ab \rangle\}.$ It follows that $\partial\calE_k(D_{2^i})$ consists of all endotrivial complexes for which $C(H) \simeq k[0]$ for all $H \leq D_{2^i}$ not conjugate to one of those three subgroups. Therefore, $\partial\calE_k(D_{2^i})$ has $\Z$-rank at most 3.

		Let $i \geq 3$. We first construct an endotrivial complex for $kD_{2^i}$ with $i \geq 3$ which is faithful. $D_{2^i}$ has three conjugacy classes of subgroups of order 2: $Z(D_{2^i}) = \langle a^{2^{i-2}} \rangle$, $\{\langle a^{2k}b\rangle \mid k \in \Z\}$, and $\{\langle a^{2k+1}b\rangle \mid k \in \Z\}$. Set $H_1 = \langle b\rangle$ and $H_2 = \langle ab \rangle$ as representatives of the conjugacy classes. We construct a chain complex of $kD_{2^i}$-modules, $\Gamma^D_i$, as follows:
		\[\Gamma^D_i := 0 \to kD_{2^i} \xrightarrow{d_2} k[D_{2^i}/H_1] \oplus k[D_{2^i}/H_2] \xrightarrow{d_{1}} k \to 0,\]
		\[d_2: x \in D_{2^i} \mapsto (xH_1, xH_2), \quad d_1: (xH_1, 0) \mapsto 1, (0, xH_2) \mapsto -1.\]

		Notice that $\Lambda(\Gamma^D_i) = [kD_{2^i}] + [k] - [k[D_{2^i}/H_1]] - [k[D_{2^i}/H_2]] \in \partial B(D_{2^i})^\times$, which generates $\partial B(D_{2^i})$ for $i \geq 3$, see \cite[Lemma 11.2.34]{Bou10}

	\end{remark}

	\begin{prop}
		$\Gamma_i^D$ is a faithful endotrivial complex of $kD_{2^i}$-modules.
	\end{prop}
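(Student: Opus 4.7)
The plan is to apply Theorem~\ref{endotrivdef2}: it suffices to show that $\Gamma^D_i(P)$ has homology concentrated in a single degree of $k$-dimension one for every $2$-subgroup $P \leq D_{2^i}$, and additionally, for faithfulness, that $\Gamma^D_i(P) \simeq k[0]$ for every nontrivial normal $2$-subgroup $P$. The organizing observation is that the $2$-subgroups of $D_{2^i}$ split into two classes according to whether or not they contain the center $Z := \langle a^{2^{i-2}}\rangle$. Using the subgroup enumeration in Remark~\ref{dihedralfaithfulendotrivialcomplex}, the subgroups not containing $Z$ are exactly the trivial subgroup and the $G$-conjugates of $H_1$ and $H_2$; every other $2$-subgroup, and in particular every nontrivial normal one (since $Z$ is the unique minimal nontrivial normal subgroup of $D_{2^i}$), contains $Z$.

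The first step is to compute $\Gamma^D_i(Z)$. By Proposition~\ref{brauerppermproperties}, $(kD_{2^i})(Z) = 0$ since $Z > 1$, and $k[D_{2^i}/H_j](Z) = 0$ for $j \in \{1,2\}$ since $Z$ is central and therefore not $G$-subconjugate to the noncentral $H_j$, while $k(Z) = k$. Hence $\Gamma^D_i(Z) \cong k[0]$. Because $Z$ is central, $Z \trianglelefteq P$ for every subgroup $P \geq Z$, so Proposition~\ref{brauerppermproperties}(d) gives $\Gamma^D_i(P) \cong (\Gamma^D_i(Z))(P) \cong k[0]$ as chain complexes of $k[N_G(P)/P]$-modules. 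This one step simultaneously verifies endotriviality at every $P$ containing $Z$ and establishes faithfulness at every nontrivial normal $P$.

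Next I would treat $P = H_1$ (the case $P = H_2$ being symmetric). Again $(kD_{2^i})(H_1) = 0$ and $k[D_{2^i}/H_2](H_1) = 0$ because $H_1 \not\leq_G H_2$. By Proposition~\ref{brauerppermproperties}(b), $k[D_{2^i}/H_1](H_1)$ is the permutation $k[N_G(H_1)/H_1]$-module on the image of the $H_1$-fixed cosets. Direct computation gives $N_G(H_1) = C_G(b) = \langle a^{2^{i-2}}, b\rangle \cong V_4$, so $N_G(H_1)/H_1 \cong C_2$ and $k[D_{2^i}/H_1](H_1) \cong kC_2$. The restricted differential $d_1(H_1)$ sends each basis element to $1$, so it is the augmentation $kC_2 \to k$, whose kernel is the unique one-dimensional simple submodule. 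Therefore $\Gamma^D_i(H_1)$ has homology concentrated in degree $1$ of $k$-dimension one.

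The last and most delicate case is $P = 1$, i.e., $\Gamma^D_i$ itself. One immediately checks $d_1 \circ d_2 = 0$ and that $d_1$ is surjective, so $H_0(\Gamma^D_i) = 0$. The crux is the computation of $\ker d_2$: an element $\sum_g \alpha_g g \in kD_{2^i}$ lies in $\ker d_2$ if and only if its coefficients sum to zero on every $H_1$-coset and every $H_2$-coset. In characteristic $2$ this forces $\alpha_g = \alpha_{gb}$ and $\alpha_g = \alpha_{g(ab)}$ for all $g$, so $\alpha$ is constant on the orbits of right multiplication by $\langle b, ab\rangle = D_{2^i}$, hence constant. Thus $\ker d_2$ is one-dimensional, spanned by the norm element $\sum_{g \in D_{2^i}} g$, which carries the trivial $G$-action. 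A dimension count (equivalently, an Euler-characteristic check using $\chi(\Gamma^D_i) = 1$) then forces $H_1(\Gamma^D_i) = 0$ and $H_2(\Gamma^D_i) \cong k$, completing the verification. The main obstacle is this final kernel computation, which relies essentially on being in characteristic $2$.
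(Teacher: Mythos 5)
Your proof is correct, and while it follows the same overall skeleton as the paper's (verify the hypotheses of Theorem~\ref{endotrivdef2} at every $2$-subgroup, with the trivial subgroup as the hard case), the crux computation is done by a genuinely different route. The paper establishes exactness in degree $1$ head-on, by showing $\ker d_1 \subseteq \im d_2$ through explicit manipulation of elements $(g_1H_1, g_2H_2)$ using the dihedral relations; this is the longest and most delicate part of its argument. You instead compute the top homology directly — $\ker d_2$ is cut out by the condition that the coefficients sum to zero on every $H_1$- and $H_2$-coset, which in characteristic $2$ forces constancy on right $\langle b, ab\rangle = D_{2^i}$-orbits, hence $\ker d_2 = k\cdot\sum_g g \cong k$ — and then kill $H_1$ by an Euler-characteristic count ($2^i - 2^i + 1 = 1$). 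This buys a shorter and less error-prone verification of the global homology. Your treatment of the subgroups containing the center is also slightly more systematic: rather than asserting $\Gamma^D_i(H) \cong k[0]$ for all remaining $H$, you compute $\Gamma^D_i(Z) \cong k[0]$ once and propagate it upward via Proposition~\ref{brauerppermproperties}(d), which cleanly disposes of both endotriviality at those subgroups and faithfulness in one stroke. Both arguments are complete; yours is arguably the cleaner write-up of the degree-$0$ case.
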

	\begin{proof}
		$N_{D_{2^i}}(H_j) = Z(D_{2^i}) H_j$ for $j \in \{1,2\}$, and it follows that \[\Gamma^D_i(H_j) \cong (k[Z(D_{2^i}) H_j/H_j] \to k)\] is an endotrivial complex. Moreover, for every subgroup $H \leq D_{2^i}$ not $H_1, H_2,$ or $1$, $\Gamma^D_i(H) = k[0]$, the trivial endotrivial complex of $k[N_{D_{2^i}}(H)/H]$-modules. Therefore if $\Gamma^D_i$ is endotrivial, then it is faithful. It remains to show that $\Gamma^D_i$ has homology concentrated in exactly one degree, with that homology having $k$-dimension one.

        We show that $H_2(\Gamma^D_i)) \cong k$, $H_1(\Gamma^D_i) = 0$, and $H_{0}(\Gamma^D_i) = 0$. The final of these three assertions is clear since $d_1$ is surjective. By dimension counting, the first assertion holds if and only if the second does. It suffices to show $\ker d_1 \subseteq \im d_2$. Write, for $m \in k[G/H_1] \oplus k[G/H_2]$, \[m = \left(\sum_{h\in [G/H_1]} a_{h}h H_1, \sum_{h \in [G/H_2]} b_{h}hH_2\right).\] Therefore, \[m\in \ker d_0 \iff \sum_{h\in [G/H_1]} a_h = \sum_{h\in [G/H_2]} b_h, \quad a_h, b_h \in k.\]
		It follows that \[\ker d_1 = \text{span}_k \{(g_1H_1, g_2H_2): g_1,g_2 \in G\}.\] On the other hand, \[\im d_2 = \text{span}_k \{(gH_1, gH_2) : g\in G\}.\]
		So it suffices to show for any $g_1, g_2 \in G$, $(g_1H_1, g_2H_2) \in \text{span}_k\{(gH_1 ,gH_2): g\in G\} = \im d_2.$ First, observe
		\[(g_1H_1, g_1bH_2) = (g_1H_1, g_1H_1) - (g_1abH_1, g_1abH_2) + (g_1aH_1, g_1aH_1).\]

		Thus $(g_1H_1, g_1bH_2) = (g_1H_1, g_1babH_2) \in \im d_2$. It inductively follows that if $(g_1H_1, g_2H_2) \in \im d_2,$ then $(g_1H_1, g_2bH_2) = (g_1H_1, g_2babH_2) \in \im d_2$. Now, observe every $g \in G = \langle H_1H_2\rangle$ can be written either as $g = (bab)\cdots(bab)$ or $g = (bab)\cdots(bab)b$. In particular, this holds for $g_1\inv$ and $g_2$, so $(g_1H_1, g_1g_1\inv g_2H_2) = (g_1H_1, g_2H_2) \in \im d_2$.

        By Theorem \ref{endotrivdef2}, $\Gamma^D_i$ is endotrivial.
	\end{proof}

	In fact, $\Gamma^D_i$ generates $\partial\calE_k(D_{2^i})$.

	\begin{theorem}
		Let $i \geq 3$. Then $\partial\calE_k(D_{2^i}) = \langle \Gamma^D_i\rangle$.
	\end{theorem}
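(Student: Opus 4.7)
The plan is to pin down the image of $h$ on $\partial\calE_k(D_{2^i})$ by restriction to a Klein four subgroup. Since $D_{2^i}$ is a $2$-group and $\operatorname{char} k = 2$, the group $\Hom(D_{2^i}, k^\times)$ is trivial, so Theorem \ref{kernelthm} implies $h$ is injective on $\calE_k(D_{2^i})$. By Remark \ref{dihedralfaithfulendotrivialcomplex}, for $C \in \partial\calE_k(D_{2^i})$ the only places where $h_C$ can be nonzero are the conjugacy classes $\{1\}, [H_1], [H_2]$; write $h_C = (a, b, c)$ at these representatives. A direct Brauer-construction computation on the components of $\Gamma^D_i$ gives $h(\Gamma^D_i) = (2, 1, 1)$, so it will suffice to show that every $(a, b, c)$ in the image of $h$ lies in $\Z \cdot (2, 1, 1)$.

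The key step is to restrict $C$ to $K := N_{D_{2^i}}(H_1) = \langle a^{2^{i-2}}, b\rangle \cong V_4$, whose order-$2$ subgroups are $Z := \langle a^{2^{i-2}}\rangle = Z(D_{2^i})$, $H_1$, and $Z_3 := \langle a^{2^{i-2}} b\rangle$. Because $Z$ and $K$ both contain the nontrivial normal subgroup $Z$ of $D_{2^i}$, and because $Z_3$ is $D_{2^i}$-conjugate to $H_1$ (via $a^{2^{i-3}}$), the h-marks of $\Res^{D_{2^i}}_K C \in \calE_k(V_4)$ at $(1, Z, H_1, Z_3, K)$ equal $(a, 0, b, b, 0)$. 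On the other hand, by Proposition \ref{prank2}, $\calE_k(V_4)$ has $\Z$-rank $4$ with explicit generators whose h-marks are readily computed; a short linear-algebra calculation shows that the image of $h \colon \calE_k(V_4) \to \Z^5$ is exactly the sublattice cut out by the single relation $a' = b' + c' + d' - 2e'$ under the above ordering. Applying this to $(a, 0, b, b, 0)$ yields $a = 2b$, and the symmetric argument restricting to $N_{D_{2^i}}(H_2) \cong V_4$ yields $a = 2c$. Hence $b = c$ and $(a, b, c) = b(2, 1, 1) = b \cdot h(\Gamma^D_i)$; injectivity of $h$ then gives $C \simeq (\Gamma^D_i)^{\otimes b}$.

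The main obstacle is selecting the correct auxiliary subgroup: restriction to $H_j$ alone or to the central $Z$ produces no nontrivial relation between $a$ and $b$, but the Klein four group $N_{D_{2^i}}(H_j)$ contains both a normal subgroup of $D_{2^i}$ (forcing a zero entry in the h-mark tuple) and the non-normal $H_j$, coupling $a$ and $b$ through the single linear relation governing $\calE_k(V_4)$ and producing the crucial factor of $2$.
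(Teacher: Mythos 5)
Your proof is correct and follows essentially the same route as the paper: both arguments reduce to restricting a faithful complex to the Klein four subgroup $N_{D_{2^i}}(H_j) = Z(D_{2^i})H_j$ and exploiting the triviality of $\partial\calE_k(V_4)$ (equivalently, the single linear relation cutting out $\operatorname{im} h$ for $V_4$) to force $h_C(1) = 2h_C(H_j)$, then concluding by injectivity of $h$ on $2$-groups. The paper phrases this as a contradiction after twisting by a power of $\Gamma^D_i$, whereas you compute the image lattice directly, but the content is the same.
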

	\begin{proof}
		Suppose for contradiction that there exists a faithful endotrivial complex $C \in \partial\calE_k(D_{2^i})$ such that $[C] \not\in \langle\Gamma^D_i\rangle  \leq \partial \calE_k(D_{2^i})$. We consider the h-marks of $C$. Recall $h(\Gamma^D_i) = 2, h(\Gamma^D_i(\langle b\rangle)) = h(\Gamma^D_i(\langle ab \rangle)) = 1.$ Suppose $h(C) = d, h(C(\langle b \rangle)) = e, h(C\langle ab\rangle)) = f$.

		Since $C \not\in  \langle\Gamma^D_i\rangle$, at least one of $e,f$ must satisfy that $d \neq 2e$ or $d \neq 2f$. First, assume $d \neq 2e$. Set $C' = C\otimes_k (\Gamma^D_i)^{\otimes (-e)}$. Then $h(C') = d-2e \neq 0$, $h(C'(\langle b \rangle)) = e-e= 0$. We have $Z(D_{2^i}) = \langle a^{2^{i-2}} \rangle \cong C_2$. Now, $h(C'(\langle a^{2^{i-2}} \rangle)) = h(C'(\langle a^{2^{i-2}}, b \rangle)) = 0$ since $C' \in \partial\calE_k(D_{2^i})$, and finally since $\langle a^{2^{i-2}}b\rangle =_G \langle b \rangle$, $h(C'(\langle a^{2^{i-2}}b\rangle)) = 0$. Restricting $C'$ to $\langle a^{2^{i-2}},b \rangle \cong V_4$ yields an endotrivial complex $C'$ of $kV_4$-modules for which $h(C') \neq 0$ but $ h(C'(H)) = 0$ for all nontrivial subgroups $1 < H \leq V_4$. However, no such complex exists since $\partial \calE_k(V_4)$ is trivial. Thus, this case cannot occur.

		Otherwise, assume that $d = 2e$ but $d \neq 2f$. Set $C' = C\otimes_k (\Gamma^D_i)^{\otimes (-f)}$, then it follows by the same argument as before that $C'$ restricted to $\langle a^{2^{i-2}},ab \rangle \cong V_4$ yields a nontrivial endotrivial complex $C' \in \partial\calE_k(V_4)$, a contradiction.
	\end{proof}

	\begin{corollary}
		$\Lambda(-): \calE_k(D_{2^n}) \to O(T(kD_{2^n}))$ is surjective and $\rk_\Z\calE_k(D_{2^n}) = n+2$.
	\end{corollary}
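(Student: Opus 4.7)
The strategy is to identify $O(T(kD_{2^n}))$ with $B(D_{2^n})^\times$, use the product decomposition of $\calE_k(D_{2^n})$ from Remark~\ref{dihedralfaithfulendotrivialcomplex} alongside the parallel Bouc-style decomposition of $B(D_{2^n})^\times$, and match $\Lambda$ componentwise.

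First, because $D_{2^n}$ is a $2$-group and $\operatorname{char}(k) = 2$, both $\Hom(D_{2^n},k^\times)$ and $\calL_{D_{2^n}}$ are trivial, so by Remark~\ref{trivsourceringdecomp} we have $O(T(kD_{2^n}))\cong B(D_{2^n})^\times$. Bouc's faithful decomposition of the Burnside-ring unit group \cite[Chapter~6]{Bou10} yields
\[ B(D_{2^n})^\times \;\cong\; (\partial B(C_2)^\times)^{2}\times \prod_{i=0}^{n}\partial B(D_{2^i})^\times, \]
indexed by the same normal $2$-subgroups of $D_{2^n}$ that appear in the isomorphism
\[ \calE_k(D_{2^n}) \;\cong\; \partial\calE_k(C_2)^{2}\times \prod_{i=0}^{n}\partial\calE_k(D_{2^i}) \]
from Remark~\ref{dihedralfaithfulendotrivialcomplex}.

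The rank count is then immediate. By Proposition~\ref{prk1gen}, $\rk_\Z\partial\calE_k(C_2)=1$; by Proposition~\ref{prank2}, $\partial\calE_k(V_4)=0$; $\partial\calE_k(\{1\}) = \calE_k(\{1\}) = \langle k[1]\rangle$ has rank $1$; and by the preceding theorem, $\partial\calE_k(D_{2^i}) = \langle\Gamma^D_i\rangle$ has rank $1$ for $3 \le i \le n$. Summing over the factors gives $2 + 1 + 1 + 0 + (n-2) = n+2$.

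For surjectivity, since $\Lambda$ commutes with inflation (and with the Brauer construction used to split off the faithful constituents), it respects the factor structure on both sides. Hence it suffices to check that $\Lambda$ sends a generator of each faithful factor of $\calE_k$ to a generator of the corresponding factor of $B(-)^\times$. For the two $\partial\calE_k(C_2)$ copies and $\partial\calE_k(D_2)$, the generator is the truncation $kC_2\to k$, whose Lefschetz invariant $[C_2/1]-[C_2/C_2]$ is easily checked to square to the identity in $B(C_2)$ and to generate the nontrivial factor of $B(C_2)^\times$. For $\partial\calE_k(\{1\})$, $\Lambda(k[1]) = -1 \in \{\pm 1\} = B(\{1\})^\times$. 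For $\partial\calE_k(D_{2^i})$ with $i \ge 3$, Remark~\ref{dihedralfaithfulendotrivialcomplex} already records that $\Lambda(\Gamma^D_i) = [kD_{2^i}] + [k] - [k[D_{2^i}/H_1]] - [k[D_{2^i}/H_2]]$ generates $\partial B(D_{2^i})^\times$, by \cite[Lemma~11.2.34]{Bou10}. The $V_4$ factor is vacuous.

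The only genuine obstacle is bookkeeping: confirming that the $\calE_k$- and $B(-)^\times$-decompositions are indexed identically by $s_2^\triangleleft(D_{2^n})$ (which is clear from how both arise via M\"obius inversion), and that $\Lambda$ intertwines the corresponding factor-projection maps. Once this is in place, the proof reduces to the per-factor identifications above, each of which is either a direct calculation or already recorded in the preceding results.
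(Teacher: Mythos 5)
Your proposal is correct and follows essentially the same route as the paper: decompose $\calE_k(D_{2^n})$ into faithful constituents via Theorem \ref{faithfuldecomp}, check surjectivity of $\Lambda$ factor by factor (using $\Gamma^D_i$ for $i\geq 3$, the truncated resolution for the $C_2$ factors, and the vanishing of $\partial B(V_4)^\times$), and read off the rank; you are merely more explicit than the paper about the matching Bouc decomposition of $B(D_{2^n})^\times \cong O(T(kD_{2^n}))$. One trivial slip: with $kC_2$ in degree $1$, the Lefschetz invariant of $kC_2\to k$ is $[C_2/C_2]-[C_2/1]$, not its negative, which is the element actually lying in $\partial B(C_2)^\times$.
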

	\begin{proof}
		$\Lambda: \partial\calE_k(D_{2^n}) \to \partial B(D_{2^n})^\times$ is surjective for $n \geq 3$, and $\Lambda: \calE_k(C_2) \to \partial B(C_2)^\times$ is surjective from Theorem \ref{thm:abeliansurjective}. Moreover, it follows from \cite[Example 4.5]{M83} that $\partial B(V_4)^\times = 0$. Thus, $\Lambda$ is surjective on all components of the decomposition \[\calE_k(D_{2^n})\cong \partial\calE_k(C_2)\times \partial\calE_k(C_2)\times \prod_{i=0}^n \partial\calE_k(D_{2^i}),\] hence surjective. The final statement is immediate.
	\end{proof}

	\begin{remark}
		We next determine $\calE_k(G)$ for generalized quaternion $2$-groups. For $Q_{2^n}$, $n \geq 3$, given by the presentation \[Q_{2^n} = \langle a,b \mid a^{2^{n-1}} = 1, {}^ba = a\inv, a^{2^{n-2}} = b^2\rangle,\] there are two normal subgroups of index 2 isomorphic to $Q_{2^{n-1}}$, which we label $H_{2^{n-1}}^1$ and $H_{2^{n-1}}^2$, and for each $i \in \{0, \dots, n-1\}$, there is a normal subgroup $H_{2^i} = \langle a^{2^{n-1-i}} \rangle\cong C_{2^i} $. We have an isomorphism $Q_{2^n}/H_{2^i}\cong D_{2^{n-i}} $.

		Therefore, \[\calE_k(Q_{2^n}) \cong \partial\calE_k(Q_{2^n}) \times \partial\calE_k(C_2)\times \partial\calE_k(C_2) \times \prod_{i=0}^{n - 1} \partial\calE_k(D_{2^i}).\] We determined in Proposition \ref{prk1gen} that $\partial\calE_k(Q_{2^n})$ is generated by a truncated periodic resolution of $k$ of period four. All other faithful constituents we have already determined, so we have a complete set of generators of $\calE_k(Q_{2^n})$.
	\end{remark}

	\begin{theorem}
		$\Lambda(-): \calE_k(Q_{2^n}) \to O(T(kQ_{2^n}))$ is surjective, and $\rk_\Z \calE_k(Q_{2^n}) = n+2$.
	\end{theorem}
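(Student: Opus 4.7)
The plan is to mirror the structure of the proof of the preceding dihedral theorem. First, I would apply Theorem~\ref{faithfuldecomp} together with the enumeration of normal $2$-subgroups of $Q_{2^n}$ given in the preceding remark to obtain the decomposition
\[\calE_k(Q_{2^n}) \cong \partial\calE_k(Q_{2^n}) \times \partial\calE_k(C_2)^2 \times \prod_{i=0}^{n-1}\partial\calE_k(D_{2^i}).\]
The rank statement would then follow by summing the $\Z$-ranks of the faithful factors using the values already obtained in this section: $\rk_\Z \partial\calE_k(Q_{2^n}) = 1$ by Proposition~\ref{prk1gen}; $\rk_\Z \partial\calE_k(D_{2^i}) = 1$ for $i \in \{0,1\}\cup\{3,\dots,n-1\}$ (with $D_1$ contributing via $k[1]$, $D_2 = C_2$ via the period-$1$ resolution, and $D_{2^i}$ for $i \geq 3$ via $\Gamma^D_i$); $\rk_\Z \partial\calE_k(V_4) = 0$; and $\rk_\Z \partial\calE_k(C_2) = 1$. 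Summing gives total rank $n + 2$.

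For surjectivity of $\Lambda$, the crucial observation is that $\Lambda$ commutes with both inflation and deflation, hence it intertwines the above faithful decomposition of $\calE_k(Q_{2^n})$ with the analogous biset-functor decomposition of $O(T(kQ_{2^n})) = B(Q_{2^n})^\times$. Surjectivity then reduces to a componentwise check. The components with quotient $D_{2^i}$ (for $i \geq 1$) are handled by the preceding dihedral theorem, those with quotient $C_2$ by Corollary~\ref{thm:abeliansurjective}, and the trivial-group component by noting that $\Lambda(k[1]) = -1$.

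The only new summand is $\Lambda \colon \partial\calE_k(Q_{2^n}) \to \partial B(Q_{2^n})^\times$. Its source is generated by a truncated period-$4$ projective resolution $C$ of $k$, and a short Euler-characteristic calculation shows $\Lambda(C) = [Q_{2^n}/Q_{2^n}]$: the projective components of $C$ contribute only multiples of $[Q_{2^n}/1]$, and the constraint $\sum_i (-1)^i \dim_k C_i = \dim_k H_4(C) = 1$ forces that coefficient to vanish. Thus the image of $\Lambda$ on this factor is trivial, and completing surjectivity amounts to verifying $\partial B(Q_{2^n})^\times = \{1\}$, which is contained in Bouc's description of Burnside unit groups of $2$-groups \cite{Bo07} (the faithful part of $B(G)^\times$ is nontrivial for $2$-groups only in the cyclic, dihedral, and semidihedral families). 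The main obstacle is this final citation step: ensuring the biset-functorial decomposition of $B(G)^\times$ truly intertwines with that of $\calE_k(G)$ via $\Lambda$, so the surjectivity problem splits cleanly, and invoking the correct statement that $\partial B(Q_{2^n})^\times$ is trivial---otherwise a short direct argument on sign-marks of a hypothetical faithful unit of $B(Q_{2^n})$ would be required.
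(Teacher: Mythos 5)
Your proposal is correct and follows essentially the same route as the paper: decompose $\calE_k(Q_{2^n})$ into faithful constituents via Theorem~\ref{faithfuldecomp}, check surjectivity of $\Lambda$ componentwise using the earlier cyclic and dihedral results, and observe that the quaternion faithful factor poses no obstruction because $\partial B(Q_{2^n})^\times$ is trivial. Your direct Euler-characteristic verification that the truncated period-$4$ resolution maps to $[Q_{2^n}/Q_{2^n}]$ is a nice supplement that the paper omits (it simply asserts triviality of $\partial B(Q_{2^n})$), but it does not change the argument.
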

	\begin{proof}
		We have shown previously that $\Lambda: \partial\calE_k(P)\to \partial B(P)^\times$ is surjective when $P$ is dihedral or cyclic. Moreover, $\partial B(Q_{2^n})$ is trivial, so $\Lambda$ is surjective on all components of the decomposition of $\calE_k(Q_{2^n})$, hence surjective. The final statement is immediate.
	\end{proof}

	\begin{remark}
		We now turn to semidihedral 2-groups. For $SD_{2^n}$, $n\geq 4$, given by the presentation \[\langle a,b \mid a^{2^{n-1}} = b^2 = 1, {}^ba = a^{2^{n-2}-1}\rangle.\] The normal subgroup structure is as follows; there are three subgroups of index two, $H_{2^{n-1}}^1 := \langle a^2, b\rangle \cong D_{2^{n-1}}$, $H_{2^{n-1}}^2 := \langle a^2, ab\rangle \cong Q_{2^{n-1}}$, and $H_{2^{n-1}} := \langle a \rangle \cong C_{2^{n-1}}$. For each $i \in \{0, \dots, n-2\}$ there is a normal subgroup $H_{2^i} := \langle a^{2^{n-1-i}} \rangle \cong C_{2^i}$. For $0 < i < n $, we have an isomorphism $SD_{2^n}/H_{2^i} \cong D_{2^{n-i}}$.

		Therefore, we have a decomposition \[\calE_{k}(SD_{2^n}) \cong \partial\calE_k(SD_{2^n})\times \partial\calE_k(C_2)\times \partial\calE_k(C_2) \times \prod_{i=0}^{n - 1} \partial\calE_k(D_{2^i}). \] It remains only to compute $\partial\calE_{k}(SD_{2^n})$.
    \end{remark}

    \begin{theorem}
        Let $n \geq 4$. $\rk_\Z \partial\calE_{k}(SD_{2^n}) = 1$.
    \end{theorem}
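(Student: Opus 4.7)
The plan has two stages: an upper bound $\rk_\Z \partial\calE_k(SD_{2^n}) \leq 1$ via restriction, then a matching lower bound by explicit construction.

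For the upper bound, let $z = a^{2^{n-2}}$ generate the unique minimal normal subgroup $Z$ of $SD_{2^n}$. Every $C \in \partial\calE_k(SD_{2^n})$ satisfies $C(Z) \simeq k[0]$, which via Proposition \ref{brauerppermproperties}(d) propagates to $C(P) \simeq k[0]$ for any $p$-subgroup $P \supseteq Z$. A direct argument then shows that the only $p$-subgroups of $SD_{2^n}$ not containing $Z$ are the trivial subgroup and the non-central involutions $\langle a^{2k}b\rangle$: every nontrivial subgroup of $\langle a\rangle$ contains $z$, so a subgroup $H$ with $z \notin H$ must meet $\langle a\rangle$ trivially, hence contain at most one involution outside $\langle a\rangle$. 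A short computation using $ab = ba^{2^{n-2}-1}$ yields $aba^{-1} = a^{2^{n-2}+2}b$, and iterating conjugation by $a$ sweeps out all $2^{n-2}$ non-central involutions in one $SD_{2^n}$-conjugacy class. Thus the h-marks of $C$ reduce to two integers $d := h_C(1)$ and $f := h_C(\langle b\rangle)$, giving $\rk_\Z \partial\calE_k(SD_{2^n}) \leq 2$.

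To sharpen this to $\leq 1$, I would restrict to the two non-cyclic maximal subgroups $D := \langle a^2, b\rangle$ and $Q := \langle a^2, ab\rangle$. One checks $D \cong D_{2^{n-1}}$ directly, while $(ab)^2 = z$ together with $(ab)(a^2)(ab)^{-1} = (a^2)^{-1}$ identifies $Q \cong Q_{2^{n-1}}$ for $n \geq 4$. Both subgroups have center $Z$, and every nontrivial normal subgroup of each contains $Z$, so $\Res^{SD_{2^n}}_{D} C \in \partial\calE_k(D_{2^{n-1}})$ and $\Res^{SD_{2^n}}_{Q} C \in \partial\calE_k(Q_{2^{n-1}})$. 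By the preceding theorems, the former group is $\langle \Gamma^D_{n-1}\rangle$ (valid since $n - 1 \geq 3$), with $\Gamma^D_{n-1}$ having h-marks $(2,1,1)$ at the trivial subgroup and the two $D$-classes of non-central involutions, while the latter is generated by the truncated period-$4$ resolution of $k$, with h-mark $4$ at the trivial subgroup. Because the two $D$-classes of non-central involutions (represented by $\langle b\rangle$ and $\langle a^2 b\rangle$) fuse in $SD_{2^n}$ into the single class of $\langle b\rangle$, both inherit h-mark $f$ from $C$. Matching h-marks then forces $d = 2f$ and $d \in 4\Z$, so $(d, f) = (4m, 2m)$ for some $m \in \Z$, proving $\rk_\Z \partial\calE_k(SD_{2^n}) \leq 1$.

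For the reverse inequality I would exhibit a faithful endotrivial complex $\Gamma^{SD}_n$ realizing $(d, f) = (4, 2)$, by analogy with the construction of $\Gamma^D_i$. A natural design is a length-$5$ chain complex whose modules are copies of $kSD_{2^n}$ and $k[SD_{2^n}/\langle b\rangle]$, with differentials chosen so that the restriction to $D$ equals $(\Gamma^D_{n-1})^{\otimes 2}$ and the restriction to $Q$ equals $\Gamma^Q_{n-1}$. Endotriviality is then verified via Theorem \ref{endotrivdef2} by computing the Brauer construction at representatives of $\{1, \langle b\rangle\}$ and checking one-dimensional homology in a single degree. The principal technical obstacle is exhibiting differentials compatible with both restrictions simultaneously; once that complex is in hand, the rank assertion is immediate.
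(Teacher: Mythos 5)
Your upper bound $\rk_\Z \partial\calE_k(SD_{2^n}) \leq 1$ is correct and is essentially the paper's argument: the paper also reduces to the two conjugacy classes $\{1\}$ and $\langle b\rangle$, obtains $d=2f$ by restriction (to $Z\langle b\rangle \cong V_4$ rather than to the dihedral maximal subgroup, but using the same vanishing of $\partial\calE_k(V_4)$ that underlies your $D_{2^{n-1}}$ computation), and obtains $4 \mid d$ by restricting to the quaternion maximal subgroup $\langle a^2, ab\rangle$, whose only involution is central. Your fusion observation that both $D$-classes of non-central involutions inherit the single h-mark $f$ is also needed and correctly used.

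The lower bound, however, is a genuine gap, and you acknowledge it yourself: you never produce the generator, you only describe a hoped-for ``design'' and defer ``the principal technical obstacle.'' There is no reason to expect a length-$5$ complex built only from copies of $kSD_{2^n}$ and $k[SD_{2^n}/\langle b\rangle]$ whose restrictions to $D$ and $Q$ are \emph{literally} $(\Gamma^D_{n-1})^{\otimes 2}$ and the truncated period-$4$ resolution; that is an over-determined constraint, and nothing in your argument shows it is satisfiable. The existence half is where the real content lies, and the paper's construction requires a specific nontrivial input: the torsion endotrivial module $E = \Omega(\Delta(SD_{2^n}/H))$ from Carlson--Th\'evenaz's computation of $\calT(SD_{2^n})$. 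One forms the three-term complex $C = P \to k[SD_{2^n}/H] \to k$ with $H_2(C) \cong E$, tensors it with itself to get $D$ with $H_4(D) \cong E\otimes_k E \cong k \oplus P'$ ($P'$ projective, using $2[E]=0$ in $\calT(SD_{2^n})$), and then cones off the projective summand $P'$ to obtain an endotrivial $\Gamma^S_n$ with $h(\Gamma^S_n)=4$ and $h(\Gamma^S_n(\langle b\rangle))=2$. Without this (or some equivalent existence argument) you have only shown $\rk_\Z \partial\calE_k(SD_{2^n}) \leq 1$, not the stated equality.
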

    \begin{proof}
        The group $SD_{2^n}$ has two conjugacy classes of subgroups of order 2, the center $Z := Z(SD_{2^n})$ and a full conjugacy class of subgroups contained in $H_{2^{n-1}}^1$. Let $H$ be a representative of this conjugacy class of noncentral subgroups, then $N_{SD_{2^n}}(H) = ZH \cong V_4$. Moreover, all subgroups of $SD_{2^n}$ of order at least 4 contain $Z(SD_{2^n})$, so the only h-marks which can be nonzero are at $1$ and $H$.

		Let $N = N_G(H)$. Suppose we have a faithful endotrivial complex $C \in \partial\calE_k(G)$. Restricting to $N \cong V_4$, we have that the h-marks at $Z$ and $N$ are 0 by faithfulness. Suppose the h-mark at $H$ is $i$, then the last subgroup of order 2 in $N$ is $SD_{2^n}$-conjugate to $H$, so it has h-mark $i$ as well. By the classification of $\calE_k(V_4)$, it follows that the h-mark at $1$ of $C$ is $h(C) = 2i$.

		Now, since $H_{2^{n-1}}^2\cap H = 1$, upon restriction to $H_{2^{n-1}}^2$, the h-mark of $C$ at 1 is $2i$ and 0 elsewhere. By the classification of $\partial\calE_k(Q_{2^n})$, $i$ must be even, that is, the h-mark at 1 must be a multiple of 4. We will construct a complex $\Gamma_n^S$ with $h(\Gamma_n^S) = 4$ and $h(\Gamma_n^S(H)) = 2$ and it follows follow that $\langle \Gamma_n^S \rangle_\Z = \partial\calE_k(SD_{2^n})$.

		To construct this complex, we first recall a theorem of Carlson and Th\'evenaz in their classification of $\calT(G)$ for $p$-groups.
    \begin{theorem}{\cite[Theorem 7.1]{CaTh00}}
        $\calT(G) \cong \Z \oplus \Z/2\Z$. The class of $\Omega(k)$ in $\calT(G)$ generates the torsion-free part of $\calT(G)$ and the class of $\Omega(\Delta(SD_{2^n}/H))$ in $\calT(G)$ is the lone nontrivial torsion element of $\calT(G)$.
    \end{theorem}

		Let $E = \Omega(\Delta(SD_{2^n}/H))$. We first define the chain complex corresponding to the construction of $E$, \[C := P \xrightarrow{c_2} k[SD_{2^n}/H] \xrightarrow{c_1} k,\] with $c_1:k[SD_{2^n}/H] \to k$ the augmentation homomorphism $\epsilon$, $P$ the projective cover of $\Delta(SD_{2^n}/H)$, and $c_2$ the corresponding covering composed with inclusion into $k[SD_{2^n}/H]$. Thus $H_2(C) = E$ and $H_i(C) = 0$ for $i \neq 2$. Moreover, it follows that $C(H) \cong k[N/H] \to k$, and $C(K) \cong k[0]$ for all $1, H\neq K \leq SD_{2^n}$.

        Set $D = C\otimes_k C$. It follows by the K\"unneth formula that $H_4(D) \cong E\otimes_k E$ and $H_i(D) = 0$ for $i \neq 4$. Moreover, by Proposition \ref{brauercommuteswithdualsandconj}, $H_4(D(H)) \cong k$, $H_i(D(H)) = 0$ for $i \neq 2$, and $D(K) \cong k[0]$ for $1,H \neq K \leq SD_{2^n}.$ Since $2[E] = 0 \in \calT(SD_{2^n})$, $E \otimes_k E \cong k \oplus P'$ for some projective $kG$-module $P'$. Denoting the differentials of $D$ by $\{d_i\}$, we have \[\ker d_4 \cong k \oplus P'\subset P^{\otimes 2} = D_4.\] Since $P'$ is an injective module as well, it follows that $P'$ is a direct summand of $D_4$, so we have a chain map $\pi_{P'}$ given by projection onto $P'$ as follows:

		\begin{figure}[H]
			\centering
			\begin{tikzcd}
			\cdots \ar[r] & D_4 \ar[r, "d_4"] \ar[d, "(\pi_{P'})_4"]& D_3 \ar[r] \ar[d, "0"]& \cdots \\
			\cdots \ar[r] & P' \ar[r] & 0 \ar[r] & \cdots
			\end{tikzcd}
		\end{figure}

		This is well-defined since $P'\subset \ker e_4$. Define $\Gamma_n^S := C(\pi_{P'})$, the mapping cone of $\pi_{P'}$. It follows that $H_4(\Gamma_n^S) \cong k$ and $H_i(\Gamma_n^S) = 0$ for all $i \neq 4$. Since we constructed $\Gamma_n^S$ by adding projective modules to a single component, the resulting complexes when taking the Brauer construction at any nontrivial subgroup remain unchanged, so Theorem \ref{endotrivdef2} implies $\Gamma_n^S$ is endotrivial, as desired.
    \end{proof}

	\begin{theorem}
		$\Lambda(-): \calE_k(SD_{2^n}) \to O(T(kSD_{2^n}))$ is surjective and $\rk_\Z \calE_k(SD_{2^n}) = n+2$.
	\end{theorem}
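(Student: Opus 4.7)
The plan is to mirror the argument used for $\calE_k(Q_{2^n})$, combining the canonical decomposition of Theorem \ref{faithfuldecomp} with the rank of $\partial\calE_k(SD_{2^n})$ just computed and the previously determined faithful constituents of the various quotient groups of $SD_{2^n}$.

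For the rank, I would read off the contribution of each factor in
\[\calE_{k}(SD_{2^n}) \cong \partial\calE_k(SD_{2^n})\times \partial\calE_k(C_2) \times \partial\calE_k(C_2) \times \prod_{i=0}^{n - 1} \partial\calE_k(D_{2^i}).\]
The preceding theorem yields $\rk_\Z \partial\calE_k(SD_{2^n}) = 1$, Proposition \ref{prk1gen} gives $\rk_\Z \partial\calE_k(C_2) = 1$, and for the dihedral product one has $\rk_\Z \partial\calE_k(D_{2^i}) = 1$ for $i \in \{0,1\}$ and for every $i \geq 3$, while $\rk_\Z \partial\calE_k(D_4) = \rk_\Z \partial\calE_k(V_4) = 0$ by Proposition \ref{prank2}. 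Summing gives $1 + 2 + (1 + 1 + 0 + (n-3)) = n+2$.

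For surjectivity of $\Lambda$, I would first note that since $SD_{2^n}$ is a 2-group and $k$ has characteristic 2, both $\Hom(SD_{2^n}, k^\times)$ and $\calL_{SD_{2^n}}$ are trivial; Remark \ref{trivsourceringdecomp} then yields $O(T(kSD_{2^n})) \cong B(SD_{2^n})^\times$. As in the dihedral and quaternion cases, the Burnside unit group decomposes into its faithful constituents $\partial B(SD_{2^n}/N)^\times$ along the normal subgroup poset, and $\Lambda$ respects this decomposition because inflation commutes with taking Lefschetz invariants. It therefore suffices to check surjectivity componentwise: the maps $\Lambda: \partial\calE_k(C_2) \to \partial B(C_2)^\times$ and $\Lambda: \partial\calE_k(D_{2^i}) \to \partial B(D_{2^i})^\times$ are surjective by Corollary \ref{thm:abeliansurjective} and the preceding dihedral theorem (with the $V_4$ case trivial), and the remaining factor $\Lambda: \partial\calE_k(SD_{2^n}) \to \partial B(SD_{2^n})^\times$ is surjective because the codomain is trivial by Bouc's classification of Burnside ring units of 2-groups, the direct analogue of $\partial B(Q_{2^n})^\times = 0$ invoked in the quaternion case. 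Had this faithful Burnside-unit group been nontrivial, the main obstacle would have been exhibiting an explicit faithful endotrivial complex whose Lefschetz invariant generates it (the natural candidate being $\Gamma_n^S$); since the target vanishes the decomposition argument closes immediately.
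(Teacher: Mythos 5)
Your proposal is correct and follows essentially the same route as the paper: exploit the faithful decomposition of $\calE_k(SD_{2^n})$, check surjectivity of $\Lambda$ componentwise using the previously established cyclic, dihedral, and quaternion cases together with the triviality of $\partial B(SD_{2^n})^\times$, and sum the ranks of the factors to get $n+2$. The paper's proof is terser (it declares the rank statement ``immediate''), but your filled-in details — the identification $O(T(kSD_{2^n})) \cong B(SD_{2^n})^\times$ for a $2$-group and the explicit rank bookkeeping over $\prod_{i=0}^{n-1}\partial\calE_k(D_{2^i})$ — are exactly what that word is suppressing.
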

	\begin{proof}
		We have shown previously that $\Lambda: \partial\calE_k(P)\to \partial B(P)^\times$ is surjective when $P$ is dihedral, cyclic, or generalized quaternion. Moreover, $\partial B(SD_{2^n})$ is trivial, so $\Lambda$ is surjective on all components of the decomposition  \[\calE_{k}(SD_{2^n}) \cong \partial\calE_{kSD_{2^n}}\times \partial\calE_k(C_2)\times \partial\calE_k(C_2) \times \prod_{i=0}^{n - 1} \partial\calE_k(D_{2^i}), \], hence surjective. The final statement is immediate.
	\end{proof}

	\section{Not all orthogonal units lift to endotrivial complexes}\label{nonsurjectivesection}

	In this section, we describe a Galois invariance condition which elements in the image of $\Lambda: \calE_k(G) \to O(T(kG))$ satisfy. For this section, assume $k$ is a perfect field of characteristic $p$.

	\begin{definition}
		Let $\varphi: k\to k$ any field automorphism of $k$. $\varphi$ induces a ring automorphism on the group algebra $\varphi: kG\to kG$ which acts trivially on group elements. Given any $kG$-module $M$, precomposing by $\varphi\inv$ induces a new $kG$-module $^{\varphi}M:= \Iso_{\varphi\inv} M$. This induces an endofunctor $^\varphi(-): {}_{kG}\mathbf{mod} \to {}_{kG}\mathbf{mod}$.

		Denote the action of $kG$ on ${}^\varphi M$ by $\cdot_\varphi$. The action of $kG$ on  $^{\varphi}M$ is as follows: for $g \in G, m \in M, c \in k$:
		\[g\cdot_\varphi m = g m, \quad c\cdot_\varphi m = \varphi(c)m.\]

		Since $k$ is perfect of characteristic $p$, the Frobenius endomorphism \[F: k \to k,\quad x \mapsto x^p.\] is an automorphism of $k$.
	\end{definition}

	\begin{prop}\label{prop:propsoftwist}
        Let $k$ be a perfect field of characteristic $p > 0$.
		\begin{enumerate}
			\item $^\varphi(-)$ is an exact, additive functor which commutes with tensor products.
			\item $^\varphi(-)$ restricts to the identity functor on ${}_{kG}\textbf{perm}$ and restricts to an autoequivalence on ${}_{kG}\textbf{triv}$.
			\item For any $M \in {}_{kG}\mathbf{mod}$ and $P\in s_p(G)$, $^\varphi(M(P)) = ({}^\varphi M)(P)$.
			\item Let $\chi \in \Hom(G, k^\times)$ and let $k_\chi$ be the associated $k$-dimension one representation. Then ${}^\varphi k_\chi \cong k_{\varphi\inv \circ \chi}$.
		\end{enumerate}
	\end{prop}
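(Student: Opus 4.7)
The plan is to recognize ${}^\varphi(-) = \Iso_{\varphi\inv}$ as a restriction-of-scalars functor along the $k$-algebra automorphism $\varphi\inv: kG \to kG$, which acts trivially on group elements. Statement (a) then follows essentially for free: a restriction functor along a ring isomorphism is additive and exact because the underlying abelian groups are unchanged, and the tensor product compatibility ${}^\varphi M \otimes_k {}^\varphi N \cong {}^\varphi(M \otimes_k N)$ holds because the $k$-balanced tensor (formed using the twisted scalar action) has the same underlying abelian group as $M \otimes_k N$, since $\varphi$ is a bijection of $k$, and carries the same diagonal $G$-action.

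For (b), I would construct for each $G$-set $X$ the explicit comparison morphism
\[
\phi_X: kX \longrightarrow {}^\varphi(kX), \qquad \sum_{x \in X} c_x\, x \;\longmapsto\; \sum_{x \in X} \varphi(c_x)\, x,
\]
and verify directly that $\phi_X(g v) = g \cdot_\varphi \phi_X(v)$ and $\phi_X(c v) = \varphi(c) \cdot \phi_X(v) = c \cdot_\varphi \phi_X(v)$, using the convention $c \cdot_\varphi m = \varphi(c) m$. Bijectivity of $\phi_X$ is immediate from that of $\varphi$. Thus ${}^\varphi$ preserves permutation modules up to isomorphism; by additivity it also preserves direct summands of permutation modules, giving the restriction to an endofunctor of ${}_{kG}\textbf{triv}$. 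An inverse is provided by ${}^{\varphi\inv}(-)$, so this restriction is an autoequivalence.

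Statement (c) should be essentially immediate from the definitions. The Brauer construction uses only the fixed-point subspaces $M^P$ and the trace maps $\tr^P_Q$, both of which depend purely on the $G$-action on $M$. Since the twist leaves the $G$-action untouched, $M^P$ and $\sum_{Q<P}\tr^P_Q(M^Q)$ have the same underlying abelian groups in $M$ and in ${}^\varphi M$, and inherit identical twisted $k$-actions in either case, so both sides agree on the nose.

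For (d), I evaluate the action of $g \in G$ on the canonical generator $1 \in k_\chi$. The original module satisfies $g \cdot 1 = \chi(g)$, understood as an element of $k$ under the ordinary $k$-structure. Reading this same equation inside ${}^\varphi k_\chi$ and applying $c \cdot_\varphi 1 = \varphi(c)$ yields $g \cdot 1 = \varphi\inv(\chi(g)) \cdot_\varphi 1$, so the character afforded by ${}^\varphi k_\chi$ is $\varphi\inv \circ \chi$, as claimed. No step here is genuinely difficult; the main obstacle is merely bookkeeping, namely keeping the direction of the twist straight in the semilinearity of $\phi_X$ in (b) and in the character computation in (d).
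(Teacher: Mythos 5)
Your proposal is correct and follows essentially the same route as the paper: (a) and (b) are handled by the same elementary restriction-of-scalars observations the paper leaves as "straightforward" (your explicit semilinear comparison map $\phi_X$ correctly supplies the isomorphism ${}^\varphi(kX)\cong kX$ underlying claim (b)), (c) is the same observation that the Brauer construction only sees the unaltered $G$-action, and (d) is the same character computation, with the direction of the twist handled correctly. No gaps.
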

	\begin{proof}
		(a) and (b) are straightforward. (c) follows from the property $^\varphi (M^P) = ({}^\varphi M)^P$ and since $^\varphi(-)$ does not alter the group action on $M$, the quotient term in the Brauer construction remains similarly unaltered. Since the functor does not alter morphisms, it is a natural isomorphism.

		For (d), regarding both ${}^\varphi k_\chi$ and $k_{\varphi\inv \circ \chi}$ as one-dimensional $k$-vector spaces, $\varphi\inv$ induces a map ${}^\varphi k_\chi \to k_{\varphi\inv \circ \chi}$. The map is bijective, and we claim it is a $kG$-module isomorphism. We compute, for $m \in {}^\varphi k_\chi$, $g \in G$, and $c \in k$:
		\[\varphi\inv (g\cdot_\varphi m) = \varphi\inv (gm) = \varphi\inv(\chi(g)m) = (\varphi\inv\circ \chi(g))\varphi\inv(m) = g\cdot \varphi\inv(m) \]
		\[\varphi\inv (c\cdot_\varphi m) = \varphi\inv (\varphi(c)m) = c\cdot\varphi\inv(m)\]
		Thus, ${}^\varphi k_\chi \cong k_{\varphi\inv \circ \chi}$.
	\end{proof}

	\begin{remark}
		$^\varphi(-)$ induces a ring automorphism on $T(kG)$ and $R_k(G)$. In the case of degree one Brauer characters, the image of $\varphi(\chi) \in R_k(G)$ is $\varphi\inv\circ\chi$, by the previous proposition. Because every orthogonal unit $u \in O(T(kG))$ can be expressed as a collection of virtual degree one characters via $\beta_G$ (see Remark \ref{trivsourceringdecomp}), and $^\varphi(-)$ commutes with the Brauer construction, \[\beta_G({}^\varphi u) = (\epsilon_P \cdot (\varphi\inv\circ\rho_P))_{P\in s_p(G)}.\] In other words, determining the image of $^\varphi u \in O(T(kG))$ amounts to post-composing $\varphi\inv$ to each local character. Similarly, for $\calE_k(G)$, \[\Xi({}^\varphi C) = (h_C(P), \varphi\inv \circ \calH_C(P))_{P \in s_p(G)}.\]
	\end{remark}

	We now focus on the Frobenius endomorphism, which is an automorphism since $k$ is assumed to be perfect. The fixed points of $F:k\to k$ is the subfield $\mathbb{F}_p$. Recall \[\calL_G = \left(\prod_{P\in s_p(G)} \Hom(N_G(P)/PC_G(P), k^\times)\right)'\leq O(T(kG)),\] the ``local homology'' subgroup of $O(T(kG))$.

	\begin{theorem}
		Let \[ u \in (B(S)^G)^\times \times \calL_G \leq O(T(kG)) \text{  with  } \beta_G(u) = (\epsilon_P \cdot \rho_P)_{P\in s_p(G)},\] and suppose $\rho_1$ is the trivial character on $G$. If there exists an endotrivial complex $C$ for which $\Lambda(C) = u$, then $^F(\rho_P) = \rho_P$ for all $P\in s_p(G)$.
	\end{theorem}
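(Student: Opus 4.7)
The plan is to transport the Frobenius twist to the complex side and exploit injectivity of the h-mark homomorphism. I would first form the Frobenius-twisted complex ${}^F C$. By Proposition \ref{prop:propsoftwist}, ${}^F(-)$ is an exact, tensor-preserving endofunctor of ${}_{kG}\textbf{triv}$ that commutes with the Brauer construction and fixes $k$. Consequently, for each $p$-subgroup $P$,
\[
({}^F C)(P) = {}^F\bigl(C(P)\bigr), \qquad H_i\bigl(({}^F C)(P)\bigr) = {}^F H_i\bigl(C(P)\bigr),
\]
and since ${}^F$ preserves $k$-dimension and vanishing, Theorem \ref{endotrivdef2} shows that ${}^F C$ is again an endotrivial complex with $h_{{}^F C}(P) = h_C(P)$ for every $P \in s_p(G)$.

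The classes $[{}^F C]$ and $[C]$ in $\calE_k(G)$ therefore have the same image under the h-mark homomorphism $h$, so $[{}^F C] - [C] \in \ker h$. By Theorem \ref{kernelthm} this kernel is $\Hom(G, k^\times)$, so there exists $\omega \in \Hom(G, k^\times)$ with
\[
{}^F C \simeq k_\omega \otimes_k C \quad \text{in } K^b({}_{kG}\textbf{triv}).
\]

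Applying the Brauer construction at the trivial subgroup and reading off the unique nonzero homology, one side gives $\calH_{{}^F C}(1) = {}^F k_{\rho_1} = k_{F^{-1}\circ\rho_1}$ by Proposition \ref{prop:propsoftwist}(d), while the other gives $\calH_{k_\omega \otimes_k C}(1) = k_{\omega \cdot \rho_1}$. The hypothesis $\rho_1 = 1$ then forces $\omega = F^{-1}\circ 1 = 1$, so in fact ${}^F C \simeq C$. Applying $\calH(P)$ to this equivalence for an arbitrary $P \in s_p(G)$ yields $k_{F^{-1}\circ\rho_P} \cong k_{\rho_P}$ as $k[N_G(P)/P]$-modules, i.e.\ $F^{-1}\circ\rho_P = \rho_P$; this is exactly the required assertion ${}^F\rho_P = \rho_P$ (equivalently, that $\rho_P$ takes values in $\mathbb{F}_p^\times$).

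I do not foresee a substantive obstacle: the essential content is that the Frobenius twist preserves h-marks (because ${}^F(-)$ is exact and merely twists the $k$-structure), so ${}^F C$ and $C$ can differ by at most one global one-dimensional character, which the hypothesis $\rho_1 = 1$ pins down as trivial. The only small bookkeeping is the identification $(k_\omega)(P) = k_{\omega|_{N_G(P)/P}}$, which follows directly from the definition of the Brauer construction since $P \leq \ker \omega$ (as $k^\times$ has no nontrivial $p$-torsion in characteristic $p$), together with the $k$-flatness of $k_\omega$ needed to compute $\calH_{k_\omega \otimes_k C}(P)$.
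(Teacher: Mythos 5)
Your proposal is correct and follows essentially the same route as the paper: twist by Frobenius, observe that ${}^F C$ has the same h-marks as $C$, invoke Theorem \ref{kernelthm} to conclude the two differ by a one-dimensional character, and use the hypothesis $\rho_1 = 1$ at the trivial subgroup to kill that character before reading off $\rho_P = {}^F\rho_P$ at each $P$. The paper phrases this via $C \otimes_k ({}^F C)^*$ lying in $\ker h$ rather than via the difference $[{}^F C]-[C]$, but these are the same argument.
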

	\begin{proof}
		Let $C, u,$ and $(\epsilon_P \cdot \rho_P)_{P\in s_p(G)}$ be as above. Then $\calH_C(P) = \rho_P$, so ${}^FC \in \calE_k(G)$ as well. ${}^FC$ has the same h-marks as $C$, and has as local homology: \[\calH_{({}^FC)}(P) =  {}^F\calH_C(P) = {}^F(\rho_P),\] by Proposition \ref{prop:propsoftwist}. Now, $C\otimes_k ({}^FC)^*$ has h-marks entirely concentrated in degree 0. Since $\rho_1$ is the trivial representation and $\ker h = \{k_\omega[0] \mid \omega \in \Hom(G,k^\times)\}$, it follows that $\rho_P \cdot ({}^F\rho_P)^* $ is also the trivial representation. Thus $\rho_P = {}^F(\rho_P)$.
	\end{proof}

    \begin{definition}
        Say a function $f: G \to k$ is Frobenius-stable if $F\circ f = f,$ or equivalently, if $f$ descends to a function $f: G\to \mathbb{F}_p$. Say an orthogonal unit $u \in O(T(kG))$ is Frobenius-stable if each component in the $\calL_G$-constituent of $\kappa(u)$ is Frobenius-stable. Note that under this definition, for any $\omega \in \Hom(G,k^\times)$, $[k_\omega] \in O(T(kG))$ is Frobenius-stable, even if $\omega$ is not a Frobenius-stable group homomorphism.

    \end{definition}

	\begin{corollary}
		Let $u \in O(T(kG))$ with $\beta_G(u) = (\epsilon_P \cdot \rho_P)_{P \in s_p(G)}$. If there exists an endotrivial complex $C$ such that $u = \Lambda(C)$, then $u$ is Frobenius-stable.

		In particular, for ever $P \in s_p(G)$, $\rho_P \cdot \rho_1\inv|_{N_G(P)}$ admits values in $\mathbb{F}_p^\times$.
	\end{corollary}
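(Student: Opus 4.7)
The plan is to reduce the general case to the situation of the preceding theorem by tensoring the given endotrivial complex with the degree one character $[k_{\rho_1\inv}][0]$ in order to annihilate the $\Hom(G,k^\times)$-constituent of $u$.

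First, I would form $C' := C \otimes_k k_{\rho_1\inv}[0]$, which is endotrivial since $\calE_k(G)$ is a group and the class of $k_{\rho_1\inv}[0]$ lies in it. Consequently $\Lambda(C') = u \cdot [k_{\rho_1\inv}] \in O(T(kG))$. Next I would compute $\beta_G(\Lambda(C'))$. By Proposition \ref{brauerppermproperties}(a) the Brauer construction commutes with tensor products of $p$-permutation modules, so $\beta_G$ is multiplicative; and for each $P \in s_p(G)$ we have $k_{\rho_1\inv}(P) \cong k_{\rho_1\inv|_{N_G(P)/P}}$ (the character $\rho_1$ has $p'$-order values, hence its restriction descends from $N_G(P)$ to $N_G(P)/P$). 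Therefore
\[
\beta_G(\Lambda(C')) \;=\; \bigl(\epsilon_P \cdot \rho_P \cdot \rho_1\inv|_{N_G(P)/P}\bigr)_{P \in s_p(G)}.
\]
In particular the component at $P=1$ is trivial, so $\Lambda(C') \in (B(S)^G)^\times \times \calL_G$.

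Now I can apply the previous theorem to $C'$: it yields ${}^F\!\bigl(\rho_P \cdot \rho_1\inv|_{N_G(P)/P}\bigr) = \rho_P \cdot \rho_1\inv|_{N_G(P)/P}$ for every $P \in s_p(G)$, which is exactly the statement that $\rho_P \cdot \rho_1\inv|_{N_G(P)}$ takes values in $\mathbb{F}_p^\times$. This proves the "in particular" clause. Finally, under the decomposition $\kappa$ of Remark \ref{trivsourceringdecomp}, the $\calL_G$-component of $u$ is extracted from $\beta_G(u)$ precisely by multiplying away $\rho_1$, i.e. by passing from $(\rho_P)$ to $(\rho_P \cdot \rho_1\inv|_{N_G(P)/P})$ and then regarding the result modulo $PC_G(P)$; by what we just proved, each of these characters is Frobenius-stable, so $u$ is Frobenius-stable in the sense of the definition.

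The only nontrivial step is the multiplicativity computation $\beta_G(u \cdot [k_{\rho_1\inv}]) = (\epsilon_P \cdot \rho_P \cdot \rho_1\inv|_{N_G(P)/P})_P$, and this is essentially formal given the construction of $\beta_G$ recalled in Remark \ref{trivsourceringdecomp} together with Proposition \ref{brauerppermproperties}(a); no new representation-theoretic input is needed beyond the preceding theorem.
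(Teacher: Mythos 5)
Your proposal follows the same route as the paper's proof: twist by $\rho_1\inv$ so that the global homology becomes trivial, then invoke the Frobenius-stability theorem. The computation $\beta_G(\Lambda(C')) = (\epsilon_P\cdot\rho_P\cdot\rho_1\inv|_{N_G(P)/P})_P$ is correct ($\beta_G$ is a ring homomorphism and $k_{\rho_1\inv}(P)\cong k_{\rho_1\inv|_{N_G(P)/P}}$ since any homomorphism $G\to k^\times$ kills $p$-elements), and the "in particular" clause and the final reading-off of Frobenius-stability are fine.

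The one step that is not airtight as written is the inference ``the component of $\beta_G(\Lambda(C'))$ at $P=1$ is trivial, hence $\Lambda(C')\in (B(S)^G)^\times\times\calL_G$.'' Under the decomposition $\kappa$ of Remark \ref{trivsourceringdecomp}, the degree one character appearing in $\beta_G(u)_1$ is the product of the $\Hom(G,k^\times)$-constituent of $\kappa(u)$ with whatever degree one character the $(B(S)^G)^\times$-constituent contributes at the trivial subgroup; the latter is $\pm$ a virtual permutation character of dimension $\pm 1$, hence $\pm$ a $\{\pm 1\}$-valued character, which for odd $p$ need not be trivial. So triviality of the character part of $\beta_G(\Lambda(C'))_1$ does not formally force the $\Hom(G,k^\times)$-constituent of $\kappa(\Lambda(C'))$ to vanish, and the hypothesis of the theorem as stated is not verified. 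The gap is harmless and easily closed: the proof of the preceding theorem never actually uses the membership $u\in (B(S)^G)^\times\times\calL_G$ --- it only uses that $\calH_C(1)=\rho_1$ is trivial --- so you can run that argument verbatim on $C'$ (which is exactly what the paper does, saying ``applying a similar proof \dots and twisting by $\rho_1$''), or restate the theorem under the weaker hypothesis before citing it.
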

	\begin{proof}
		This follows by applying a similar proof as in the previous theorem to the collection of signed tuples $\beta_G(u)$ (see Remark \ref{trivsourceringdecomp}) and twisting by $\rho_1$ so that the global homology is trivial. Note that the signs $\epsilon_P$ are $F$-invariant. The last statement follows from the previous definition.
	\end{proof}

	\begin{example} Let $p = 2$, $k$ be a finite field of characteristic 2 which has a 3rd root of unity $\omega$, and $G = A_4$. $kA_4$ has three projective indecomposables, and are given by $P_1 = k[A_4/C_3]$, $P_2 = k_{\omega}\otimes_k k[A_4/C_3]$, and $P_3 = k_{\omega^2}\otimes_k k[A_4/C_3]$, where $k_\omega$ is the simple representation of dimension one for which $(123)\cdot 1 = \omega$.

	Set $u = k_\omega + P_1 - P_2$. One may compute that $\beta_G(u) = (\chi_1, \chi_{C_2}, \chi_{V_4})$ with $\chi_{V_4} \in \Hom(A_4/V_4, k^\times)$ given by $(123)V_4 \mapsto \omega$, $\chi_{C_2} \in \Hom(V_4/V_4, k^\times)$ trivial, and $\chi_1 \in \Hom(A_4/A_4, k^\times)$ trivial. Therefore, $u \in O(T(kG))$. $^F(\chi_{V_4}) = k_{\omega^2}$, so $u$ cannot lift to an endotrivial complex.

	\end{example}

    \textbf{Acknowledgments:} The author is extremely grateful to his supervisor Robert Boltje for the countless hours of discussion, supervision, and assistance he has offered to made this paper possible. He additionally would like to thank the many mathematicians who offered their thoughts during the Dame Kathleen Ollerenshaw workshop including Nadia Mazza, Caroline Lassueur, and Markus Linckelmann, as well as the University of Manchester for their hospitality during the workshop. Finally, he would like to thank Dan Nakano for asking a very insightful question during a talk by the author, which led to the ideas present in Definition $\ref{hmarkhom}$.

	\bibliography{bib}
	\bibliographystyle{plain}

\end{document}